\newtheorem{theorem}{Theorem}[section]
\newtheorem{corollary}[theorem]{Corollary}
\newtheorem{lemma}[theorem]{Lemma}
\newtheorem{proposition}[theorem]{Proposition}
\newtheorem{remark}[theorem]{Remark}
\newtheorem{definition}[theorem]{Definition}
\newtheorem{notation}[theorem]{Notation}
\def\max{\operatorname{max}}
\def\deg{\operatorname{deg}}
\def\g{\operatorname{g}}
\def\Proj{\operatorname{Proj}}
\def\Div{\operatorname{Div}}
\def\WDiv{\operatorname{WDiv}}
\def\CDiv{\operatorname{CDiv}}
\def\N{\operatorname{N}}
\def\NE{\operatorname{NE}}
\def\Exc{\operatorname{Exc}}
\def\Supp{\operatorname{Supp}}
\def\Bl{\operatorname{Bl}}
\def\Pic{\operatorname{Pic}}
\def\Nef{\operatorname{Nef}}
\def\Mov{\operatorname{Mov}}
\def\Ext{\operatorname{Ext}}
\def\Eff{\operatorname{Eff}}
\newcommand{\OO}{\mathcal{O}}
\newcommand{\NN}{\mathbb{N}}
\newcommand{\ZZ}{\mathbb{Z}}
\newcommand{\QQ}{\mathbb{Q}}
\newcommand{\RR}{\mathbb{R}}
\newcommand{\CC}{\mathbb{C}}
\newcommand{\PP}{\mathbb{P}}
\newcommand{\FF}{\mathbb{F}}
\newcommand{\isom}{\cong}
\newcommand{\defeq}{\vcentcolon=}
\newcommand{\ra}{\Rightarrow}
\newcommand{\map}{\smash{\xymatrix@C=0.5cm@M=1.5pt{ \ar[r]& }}}
\newcommand{\maps}[1]{\smash{\xymatrix@C=0.5cm@M=1.5pt{ \ar[r]^{#1}& }}}
\newcommand{\rmap}{\smash{\xymatrix@C=0.5cm@M=1.5pt{ \ar@{-->}[r]& }}}
\newcommand{\psmap}{\smash{\xymatrix@C=0.5cm@M=1.5pt{ \ar@{..>}[r]& }}}
\newcommand{\norm}[1]{\left\lVert#1\right\rVert}
\title{Sarkisov Links with centres space curves on smooth cubic surfaces}
\author{Sokratis Zikas}
\date{\today}
\address{Universit\"at Basel, Departement Mathematik und Informatik, Spiegelgasse 1, CH--4051 Basel, Switzerland}
\email{sokratis.zikas@unibas.ch}
\begin{document}

\maketitle	

\begin{abstract}
	We construct and study Sarkisov links obtained by blowing up smooth space curves lying on smooth cubic surfaces. 
	We restrict our attention to the case where the blowup is not weak Fano. Together with the results of \cite{WeakFanos} which cover the weak Fano case, we provide a classification of all such curves.
	This is achieved by computing all curves which satisfy certain necessary criteria on their multisecant curves and then constructing the Sarkisov link step by step.
\end{abstract}

\section{Introduction}

The Sarkisov program is a central tool in the study of birational relations between Mori fibre spaces. It was first proved by Corti in \cite{Corti} for dimension $3$ and by Hacon and McKernan in \cite{HM13} in the general case. In recent years there has been an increase in the number applications of it, mostly to the study of birational automorphisms of certain Mori fibre spaces. \cite{MR3604978}, \cite{MR3505644}, \cite{BLZ19} and \cite{MR3807895} are but a few papers on the subject. Thus, it is natural to study and try to  classify links involving a fixed Mori fibre space.

In this paper we will study links where one of the two Mori fibre spaces is $\PP^3 \map pt$. Notice that since the Picard rank of $\PP^3$ is $1$, the first step has to be a divisorial contraction $f\colon X \map \PP^3$. The main case of interest for us is when $f$ is the blowup of a smooth curve.

In \cite{JPRI}, \cite{JPRII} and \cite{Cutrone-Marshburn} the two sets of authors embark on a classification of weak Fano threefolds obtained by blowups of Fano threefolds of Picard rank $1$ under different assumptions. In all three papers, the focus was the classification, while the existence of links was more of a byproduct. Moreover, the classification in the last mentioned paper was numerical in nature leaving the actual existence of some cases open.

Our approach is more closely related to the one of \cite{WeakFanos}. There, the authors give a complete
list of pairs $(g,d)$ such that the blowup of a general space curve $C$ of genus $g$ and degree $d$ produces a weak Fano threefold. This was done via geometric ways, proving also the existence of all the listed cases. Moreover, focus was also given to the links produced by the second $K_X$-negative contraction. Thus, our focus here will be the case when the blowup produces threefolds which are not weak Fano but nonetheless still fit in some Sarkisov link. In that case we will say that $C$ induces a Sarkisov link.

It is easy to show (and will also be evident in the course of this paper) that not all curves induce a Sarkisov link. In a sense, we need a way to control the $K_X$-non-negative curves. A way to do so is to reduce to the case when $C$ is contained in a surface of degree less or equal to $4$. In this case, all $K_X$-non-negative curves will be contained in the surface of low degree whose geometry we can exploit to better control their behaviour. This was the main viewpoint in \cite{WeakFanos}. In this paper we treat the case when the curve lies in a smooth cubic.

Smooth cubics in $\PP^3$ are isomorphic to $\PP^2$ blown up at $6$ points. Any curve on the blowup $\pi\colon S \map \PP^2$ of $\PP^2$ along $6$ points is linearly equivalent to $kL - \sum m_i F_i$, where $L$ is the pullback of a general line under $\pi$ and the $F_i$'s are the $\pi$-exceptional curves. In that case, we will say that the curve is of type $(k; m_1, \dots, m_6)$. Identifying the cubic with $S$, by classifying the curves that induce Sarkisov links we mean classifying their type. We may also reduce to the case where $m_i \geq m_{i+1}$ and $k\geq m_1 + m_2 + m_3$. With that in mind the main theorem of this paper is the following:
\begin{theorem}\label{theorem intro}
	Let $C \subset S \subset \PP^3$ be a curve lying on a smooth cubic surface in $\PP^3$. Suppose that the blowup of $\PP^3$ along $C$ is not weak Fano. Then $C$ induces a Sarkisov link if and only if its type (up to the assumptions $m_i \geq m_{i+1}$ and $k\geq m_1 + m_2 + m_3$) belongs to one of the following two sets:
	\begin{align*}
	\mathcal{T}^{(II)} &= \left\{ (3;\, 1,1,0,0,0,0),\, (3;\, 2,0,0,0,0,0),\, (4;\, 2,1,1,1,0,0),\, (5;\, 2,1,1,1,1,1)  \right\};\\
	\mathcal{T}^{(I)} &= \left\{ (3;\, 2,1,0,0,0,0),\, (5;\, 3,1,1,1,1,1)  \right\}.
	\end{align*}	
	Moreover, the curves in $\mathcal{T}^{(II)}$ produce Sarkisov links of Type II to terminal Fano $3$-folds of Picard rank $1$ while the curves in $\mathcal{T}^{(I)}$ produce Sarkisov links of Type I to terminal del-Pezzo fibrations of degree $5$ and $4$ respectively.
\end{theorem}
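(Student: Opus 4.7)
The plan is to exploit the $2$-ray game structure inherent in any Sarkisov link starting at $X = \Bl_C\PP^3$. Since $\rho(X)=2$, the cone $\overline{\NE}(X)$ is spanned by two rays; one is the $f$-exceptional ray, and the Sarkisov link, if it exists, is the outcome of the $K_X$-MMP run in the other direction, possibly after flopping some $K_X$-trivial curves. The non-weak-Fano hypothesis means that $-K_X$ is not nef, so there must exist $K_X$-non-negative curves on $X$; by the analysis carried out in Section~\ref{section on quartics}, such curves must lift from multisecants of $C$ lying on a low-degree surface containing $C$, in our case the cubic $S$ itself. The existence of a Sarkisov link is then equivalent to the statement that after contracting or flopping these finitely many bad curves we arrive at a new Mori fibre space.

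The first step is to translate this into numerical constraints on the type $(k; m_1, \ldots, m_6)$. From the type one reads off the degree $d = 3k - \sum m_i$, the genus $g = \binom{k-1}{2} - \sum \binom{m_i}{2}$, and the classes of the rational curves on $S$ (lines, conics, cubics on $S$) which could serve as multisecants of $C$. The requirement that only finitely many $K_X$-non-negative curves exist and that after resolving them we land on a Mori fibre space translates to concrete intersection inequalities on the $m_i$. Together with the standing normalisation $m_i \geq m_{i+1}$ and $k \geq m_1 + m_2 + m_3$, these should cut the infinite family down to a finite list of candidate types, obtained by a combinatorial search.

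The second step is to handle each surviving candidate individually. For each type I would first verify the existence of a smooth $C \subset S$ realising the class via a dimension count on the linear system $|kL - \sum m_i F_i|$ on the del~Pezzo surface $S$, and then run the $2$-ray game on $X$ explicitly. This amounts to: identifying the flopping $(-1,-1)$-curves (coming from specific lines or conics on $S$ meeting $C$ excessively), performing the flop, and then identifying the resulting $K$-negative contraction either as a divisorial contraction to a terminal Fano threefold of Picard rank~$1$ (Type~II) or as a Mori fibration over $\PP^1$ whose general fibre is a del~Pezzo of the claimed degree (Type~I). The types not appearing in $\mathcal{T}^{(I)}\cup\mathcal{T}^{(II)}$ would be eliminated by exhibiting an obstruction: typically an uncontractible divisor after the flops, or an infinite family of $K_X$-non-negative curves blocking the MMP.

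The main obstacle lies in this second step. The numerical enumeration is mechanical once the correct invariants are set up, but verifying that a particular $2$-ray game terminates at a $\QQ$-factorial terminal variety with the claimed structure requires a delicate geometric analysis of how the cubic $S$, its $27$ lines, and the strict transforms of various low-degree surfaces containing $C$ interact throughout the birational process. Identifying the endpoint in the Type~II cases as an honest terminal Fano threefold of Picard rank~$1$ (as opposed to a variety admitting further contractions) and checking the del~Pezzo degree in the Type~I cases constitute the technical heart of the argument.
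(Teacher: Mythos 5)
Your overall two-step strategy (numerical enumeration of candidate types followed by an explicit run of the $2$-ray game) is the same as the paper's, and your first step matches it closely: the paper also cuts the infinite family down to a finite list by forbidding infinite families of $K_X$-non-negative curves, realised as pencils of conics residual to pairs of intersecting multisecant lines (Lemma \ref{conic bundle} and Lemma \ref{term}), combined with the normalisation of Remark \ref{assumptions on the type}. Note that in the paper this numerical step is already exact: every type surviving the enumeration does induce a link, so no candidate needs to be eliminated afterwards by a geometric obstruction.

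The genuine gap is in your second step. You describe the pseudo-isomorphism part of the game as ``identifying the flopping $(-1,-1)$-curves \ldots\ performing the flop'', but the standing hypothesis is precisely that $X=\Bl_C\PP^3$ is \emph{not} weak Fano, so the first modifications are not flops: the strict transforms of the $m$-secant lines with $m\geq 5$ are $K_X$-\emph{positive}, have normal bundle $\OO_{\PP^1}(-1)\oplus\OO_{\PP^1}(3-m)$ (Proposition \ref{normal bundles of lines}), and must be \emph{anti-flipped}. Constructing these $(-1,-2)$- and $(-1,-3)$-anti-flips and checking that their targets remain terminal (they acquire quotient singularities of type $\frac{1}{2}(1,1,1)$ and $\frac{1}{3}(1,1,2)$) is the technical heart of the argument (Lemmas \ref{intersection numbers}, \ref{bpf} and Proposition \ref{1,m flip}); a $K$-positive small contraction and its anti-flip are not supplied by the ordinary MMP, and terminality of the anti-flipped variety is exactly the condition that can fail. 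Relatedly, your plan does not explain why the $2$-ray game can be run to completion at all; the paper secures this by showing $X$ is of Fano type, hence a Mori Dream Space (Proposition \ref{X is an MDS}), and then reduces the existence of the link to the terminality check above via Proposition \ref{Induces link iff MDS}. Without the anti-flip construction and the Mori Dream Space input, your second step does not get off the ground in any of the six cases.
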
 
\noindent This is proven by constructing the steps of the so-called $2$-ray game on $X$ (see section \ref{Mori dream spaces and 2-ray games}, for the definition of the $2$-ray game). 

The outline of the paper is as follows: In Section \ref{Preliminaries}, we introduce some key elements in the Sarkisov program such as the notion of Sarkisov links and that of the central model. We then define Mori dream spaces and explain their connection to the Sarkisov program via the $2$-ray game. We also prove that any curve  lying on a plane or quadric that induces a Sarkisov link already appears in the literature.

In Section \ref{Curves on cubic surfaces} we recall the basic theory of cubic surfaces. We then establish some bounds that the type of a curve must satisfy in order for it to induce a Sarkisov link and compute all possible curves that satisfy these bounds. At this point we obtain the combined list of curves appearing in Theorem \ref{theorem intro}.

In Section \ref{Existence of links} we prove that all curves in the aforementioned list actually produce Sarkisov links. This is achieved by constructing the first few steps of the $2$-ray game until we hit the weak Fano variety lying over the central model at which point the general theory assures the existence of the links. Finally, in Section \ref{Construction and study of the links} we explore  all steps of the links as well as compute some invariants of the target variety.

\textbf{Acknowledgement.} I would like to thank \emph{J\'er\'emy Blanc} for posing the problem to me as well as his guidance and invaluable help throughout. I would also like to thank \emph{Hamid Ahmadinezhad}, \emph{Pascal Fong}, \emph{St\'ephane Lamy},  \emph{Erik Paemurru},  \emph{Nikolaos Tsakanikas}, \emph{Immanuel van Santen} and \emph{Julia Schneider} for the useful discussions.

I would also like to thank the referees for their remarks and suggestions, helping greatly in the readability of the paper.

This work was supported by the Swiss National Science Foundation Grant "Birational transformations of threefolds" $200020\_178807$.

\section{Notation \& Conventions}

In this paper all varieties are assumed to be normal, projective and defined over $\CC$. A $3$-fold is a $3$-dimensional projective variety. For a variety $X$ we also define:\\
\addtolength{\tabcolsep}{-3pt} 
\begin{tabular}{rcl}
	$\WDiv(X)$ & $\defeq $ & the group of Weil divisors modulo linear equivalence;\\
	$\CDiv(X)$ & $\defeq $ & the group of Cartier divisors modulo linear equivalence;\\
	$N^1(X/Z)$ & $\defeq$ & $\{\QQ\text{-Cartier divisors modulo numerical equivalence over } Z\} \otimes \RR$;\\
	$N_1(X/Z)$ & $\defeq$ & $\{1\text{-cycles modulo numerical equivalence over } Z\} \otimes \RR$.
\end{tabular}\\
For a $\ZZ$-module $M$ we define $M_{\QQ} \defeq M\otimes_{\ZZ} \QQ$. If $z_1, z_2$ are $1$-cycles in $X$ we write $z_1 \sim_S z_2$ if there exists a surface $S$ such that $z_1$ and $z_2$ are linearly equivalent in $S$.

Finally, we denote by $\NE(X/Z)$ the cone in $N_1(X/Z)$ spanned by effective $1$-cycles and by $\Nef(X)$, $\Mov(X)$, $\Eff(X)$ and $\overline{\Eff}(X)$ the cones in $N^1(X) \defeq N^1(X/pt)$ spanned by nef, movable, effective and pseudoeffective divisors respectively.

\section{Preliminaries}\label{Preliminaries}

\subsection{The Sarkisov program}

\begin{definition}\label{singularities of pairs}
	Let $X$ be a normal variety with $K_X$ $\QQ$-Cartier and let $f \colon Y \map X$ be a resolution of singularities. Write  
	\[
	K_Y \sim_{\QQ} f^*K_X + \sum a_i E_i.
	\]
	 We say that $X$ is  \textbf{terminal} if $a_i > 0$ and \textbf{canonical} if $a_i \geq 0$.
	
	Let $\Delta = \sum d_j D_j$ be a $\QQ$-divisor on $X$, such that $K_X + \Delta$ is $\QQ$-Cartier and let $g \colon Z \map X$ be a log resolution, i.e.\ a resolution of $X$ such that $\Supp(g^{-1}(D) + \Exc(g))$ has pure codimension 1 and is simple normal crossings. Write 
	\[
	K_Y \sim_{\QQ} f^*(K_X + \Delta) + \sum a_i E_i
	\]
	where $f_*\left( \sum a_i E_i\right) = -\Delta$.	
	We say that the pair $(X, \Delta)$ is  \textbf{Kawamata log terminal} (klt for short) if $a_i > -1$ for every $i$.
\end{definition}

\begin{remark}\label{klt for any coefficient}
	Suppose that $X$ is a smooth threefold and $\Delta$ is a prime divisor. If the support of $\Delta$ is smooth, then for any $q<1$, the pair $(X, q\Delta)$ is klt.
	
	Indeed, in such case we may choose the identity as a log resolution of the pair and compare with the definition above.
\end{remark}

First we recall the definition of a Sarkisov link.
\begin{definition}[Sarkisov link]\label{Sarkisov link}
	A \textbf{Sarkisov diagram} is a commutative diagram of the form
	\[
	\xymatrix{
		X' \ar[d]_p \ar@{..>}[rr]^{\chi}  && Y' \ar[d]^q\\
		X \ar[d]_{\phi} && Y \ar[d]^{\psi} \\
		S \ar[dr]_s & & T\ar[dl]^t\\
		& R
	}
	\]
	which satisfies the following properties
	\begin{enumerate}
		\item $\phi$ and $\psi$ are Mori fibre spaces,
		\item $p$ and $q$ are divisorial contractions or isomorphisms,
		\item $s$ and $t$ are extremal contractions or isomorphisms,
		\item $\chi$ is a pseudo-isomorphism (i.e.\ an isomorphism when restricted to a subset whose complement has codimension greater than 1),
		\item all varieties of maximal dimension are $\QQ$-factorial and terminal,
		\item the relative Picard rank $\rho(Z/R)$ of any variety $Z$ in the diagram is at most 2. \label{last}
	\end{enumerate}
	Property (\ref{last}) implies that $p$ is a divisorial contraction if and only if $s$ is an isomorphism. A similar statement holds for the right hand side of the diagram. Depending whether $s$ or $t$ is an isomorphism, we get four types of Sarkisov diagrams \vspace{0.3cm}
	
	\noindent
	\begin{minipage}{.25\linewidth}
		\begin{center}
			\textbf{Type I}
		\end{center}
		\[
		\xymatrix{
			X' \ar@{..>}[r] \ar[d] & Y \ar[d]^{\psi} \\
			X  \ar[d]_{\phi} & T \ar[dl]\\
			S
		}
		\]
	\end{minipage}\hspace{-0.5cm}
	\begin{minipage}{.25\linewidth}
		\begin{center}
			\textbf{Type II}
		\end{center}
		\[
		\xymatrix{
			X' \ar@{..>}[r] \ar[d] & Y' \ar[d]\\
			X  \ar[d]_{\phi} & Y \ar[d]^{\psi} \\
			S \ar[r]^{\sim}& T
		}
		\]
	\end{minipage}\hspace{-0.5cm}
	\begin{minipage}{.25\linewidth}
		\begin{center}
			\textbf{Type III}
		\end{center}
		\[
		\xymatrix{
			X \ar@{..>}[r] \ar[d]_{\phi} & Y' \ar[d] \\
			S  \ar[rd] & Y \ar[d]^{\psi}\\
			&T
		}
		\]
	\end{minipage}\hspace{-0.2cm}
	\begin{minipage}{.25\linewidth}
		\begin{center}
			\textbf{Type IV}
		\end{center}
		\[
		\xymatrix{
			X \ar@{..>}[rr] \ar[d]_{\phi} && Y \ar[d]^{\psi} \\
			S  \ar[rd] && T \ar[ld]\\
			&R &.
		}
		\]
	\end{minipage} \vspace{0.3cm}\\
	The induced birational map $X \rmap Y$ is called a \textbf{Sarkisov link}. A diagram of the form above that satisfies all but condition $(5)$ will be called a \textbf{Sarkisov-like diagram}
\end{definition}

\begin{theorem} ({\cite[Theorem 1.1]{HM13}}) \label{Sarkisov}
	Let $X \maps{\phi} S$, $Y \maps{\psi} T$ be two Mori fibre spaces where $X$, $Y$ are $\QQ$-factorial and terminal. Then any birational map between $X$ and $Y$ can be decomposed as a sequence of Sarkisov links and isomorphisms of Mori fibre spaces.
\end{theorem}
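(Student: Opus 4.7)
The plan is to follow the strategy of Hacon--McKernan and reduce the theorem to a descending induction on a numerical invariant, the \emph{Sarkisov degree}. Given Mori fibre spaces $\phi \colon X \to S$ and $\psi \colon Y \to T$ together with a birational map $\varphi \colon X \rmap Y$, I would begin by choosing a very general ample divisor $A$ on $Y$ and transporting the linear system $|nA|$, for $n$ sufficiently divisible, to a movable linear system $\mathcal{H} \subset |H|$ on $X$ via a common resolution. The Sarkisov degree is then the triple $(\mu, c, e)$, where $\mu$ is determined by $K_X + \tfrac{1}{\mu}H \equiv_\phi 0$, $c$ is the canonical threshold of the pair $(X, \tfrac{1}{\mu}\mathcal{H})$, and $e$ counts crepant valuations computing $c$. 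The strategy is to construct, for any $(X \to S, \mathcal{H})$ whose degree exceeds the "trivial" value, a Sarkisov link $X \rmap X_1$ such that the transformed pair has strictly smaller Sarkisov degree in the lexicographic order.

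To build such a link, I would run a relative $(K_X + cH)$-MMP with scaling of an ample divisor, over $S$ if $c < 1/\mu$ and absolutely otherwise. By the theorem of Birkar--Cascini--Hacon--McKernan this program terminates, and its termination takes one of the following shapes: it ends with a divisorial contraction to a variety of lower Picard rank, giving a link of Type II or III, or with a new Mori fibre space structure, giving a link of Type I or IV. In every case the diagram produced fits into the template of Definition \ref{Sarkisov link}; the relative Picard rank constraint in item (6) is verified by tracking how each step of the MMP affects Picard ranks relative to the bases $S$, $T$ and their common contraction $R$. This provides a candidate Sarkisov link out of $X$, which one then composes with $\varphi$.

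The hardest step is showing that, after replacing $\varphi$ by the composition with the inverse of the constructed link, the Sarkisov degree strictly drops, and that iterating this process terminates in finitely many steps. For the decrease one carefully analyses how the invariants $\mu$, $c$, $e$ change under each elementary operation of the MMP, using the fact that new $(K+cH)$-extremal contractions can only decrease the triple lexicographically when the link is non-trivial. Termination relies on two deep inputs: the ACC for log canonical thresholds of Hacon--McKernan--Xu, which forces $c$ to lie in a DCC set and so prevents an infinite descending chain, together with the rationality and boundedness of $\mu$ once $\mathcal{H}$ is fixed.

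The principal obstacle is thus the intertwining of termination in finite time with a lexicographically decreasing Sarkisov degree, both of which require the full strength of BCHM and the ACC for log canonical thresholds. Granted these, the argument proceeds by descending induction on the Sarkisov degree until the birational map becomes an isomorphism of Mori fibre spaces, at which point $\varphi$ has been expressed as a composition of Sarkisov links and isomorphisms of Mori fibre spaces, yielding the theorem.
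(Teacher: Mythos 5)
The paper offers no proof of this statement at all: it is imported verbatim from \cite[Theorem 1.1]{HM13} and used as a black box, so there is nothing internal to compare your argument against. What you have sketched is the classical Corti--Sarkisov strategy (descending induction on the Sarkisov degree $(\mu,c,e)$), which is a genuinely different route from the one taken in the cited source. Hacon and McKernan deliberately avoid the Sarkisov degree: they realize both Mori fibre spaces as ample models of suitable divisors on a common smooth birational model $W$, invoke the finiteness of ample models from \cite{BCHM} to decompose a relevant polytope of divisors into finitely many chambers, and obtain the factorization into links by following a general path through adjacent chambers, each wall-crossing contributing one link. That route yields the theorem in all dimensions with no termination argument whatsoever; your route, when it works, buys a more algorithmic ``untwisting'' procedure, and in dimension three it is exactly Corti's proof in \cite{Corti}, which this paper also cites.

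The genuine gap in your write-up is the descent and termination of $(\mu,c,e)$, which is the entire content of the theorem on this approach and which you assert rather than establish. First, the threshold $c$ you define is a canonical threshold, not a log canonical threshold; the ACC you would need is ACC for canonical thresholds, a substantially harder statement than ACC for lct (only recently settled in dimension $3$ and not available in the generality of the theorem), so citing ACC for log canonical thresholds does not control $c$. Second, the DCC-type control of $\mu$ amounts to a boundedness statement for the Fano fibrations appearing as outputs of the MMP, again far from free in arbitrary dimension. Third, even granting both, one must exclude infinitely many links along which $\mu$ and $c$ stay constant, which in Corti's argument requires a separate finiteness statement for the crepant valuations computing $c$ and a no-cycling argument. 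These are precisely the points where the classical approach resists generalization beyond threefolds, and precisely what the finiteness-of-models proof of \cite{HM13} was designed to circumvent; as written, your proposal does not close them.
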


Notice that if the starting Mori fibre space is $\PP^3 \maps{\phi} pt$, then we can only get links of Type $I$ and $II$. More specifically, the birational map $\PP^3 \rmap{}Y$ can be factored as the inverse of a divisorial contraction followed by either a pseudo-isomorphism (Type I link) or a pseudo-isomorphism and a divisorial contraction (Type II link). In the following we study the case where the first divisorial contraction is the inverse of blowing up a smooth curve. We say that such a curve $C \subset \PP^3$ \textbf{induces a Sarkisov link} if $X \defeq \Bl_C \PP^3 \map{}\PP^3$ fits into a Sarkisov diagram.

As explained in 
\cite[Remark 3.10]{BLZ19}, if $X_m \rmap{}Y_n$ is the pseudo-isomorphism part of a Sarkisov diagram, then its decomposition into anti-flips, flops and flips takes the following form:
\begin{gather*}\label{Sarkisov diagram}
\begin{aligned}
\xymatrix@C=1.5cm@R=0.5cm{
	X_m\ar[d] \ar[rrrdd] & \dots \ar@{..>}[l] \ar[rrdd] & X_0 \ar[rd] \ar@{..>}[l] \ar@{<..>}[rr] & & Y_0 \ar[ld] \ar@{..>}[r] &\dots \ar@{..>}[r] \ar[lldd]&Y_n \ar[d] \ar[llldd]\\
	\ar[rrrd]&&& Z \ar[d] &&& \ar[llld]\\
	&&& R &&&
}
\end{aligned}\tag{$\star$} 
\end{gather*}
where $X_0 \smash{\xymatrix@M=1.5pt{ \ar@{<..>}[r]& }} Y_0$ is either a flop over $Z$ or $X_0 \isom Z \isom Y_0$. In the first case $X_0$ and $Y_0$ are $\QQ$-factorial and terminal weak Fanos over $R$, while in the second case $Z$ is $\QQ$-factorial and Fano over $R$. In both cases $Z/R$ is called the \textbf{central model} of the Sarkisov link/diagram.

\subsection{Mori dream spaces and 2-rays games}\label{Mori dream spaces and 2-ray games}

\begin{definition}\label{SQM, BirContr}
	Let $X$ and $Y$ be normal, projective varieties.
	
	A \textbf{small $\QQ$-factorial modification} (SQM for short) of $X$ is a pseudo-isomorphism $f\colon X \psmap X'$, where $X'$ is again normal, projective and $\QQ$-factorial.
	
	A \textbf{birational contraction} $f\colon X \rmap Y$ is a birational map such that if $(p,q)\colon W \map X \times Y$ is a resolution of $f$, then every $p$-exceptional divisor is also $q$-exceptional.
\end{definition}

\begin{definition}[Mori Dream Space]\label{MDS}
	A normal projective variety $X$ is called a \textbf{Mori Dream Space} (MDS for short) if it satisfies the following:
	\begin{enumerate}
		\item $X$ is $\QQ$-factorial and $\Pic(X)_{\QQ} = \N^1(X)_{\QQ}$,
		\item $\Nef(X)$ is generated by finitely many semi-ample divisors and
		\item there are finitely many SQMs $f_i\colon X \psmap{}X_i$ such that each $X_i$ satisfies $(1)$ and $(2)$ and $\Mov(X)$ is the union of $f_i^*\Nef(X_i)$.
	\end{enumerate}
\end{definition}

\begin{proposition}[{\cite[Proposition 1.11]{Hu-Keel}}]\label{Properties of MDS}	
	Let $X$ be an MDS. Then the following hold.
	\begin{enumerate}
		\item The Minimal Model Program (MMP for short) can be carried out for any divisor $D$ on $X$. That is, for any $D$-negative extremal ray of $\NE(X)$ the necessary contractions and flips exist, any sequence of flips terminates, and if at some point $D$ becomes nef then at that point it becomes semi-ample.
		\item 	The $f_i$'s in property $(3)$ of Definition \ref{MDS} are the only SQMs of $X$. 		
		Moreover, there are finitely many birational contractions $g_i \colon X \rmap{}Y_i$, such that 
		\[
		\overline{\Eff}(X)= \bigcup_i \mathcal{C}_i,
		\]
		where $\Eff(X)$ denotes the cone of effective divisors in $\N^1(X)$ and
		\[
		\mathcal{C}_i = g_i^*\Nef(Y_i) + \RR_{\geq 0}\{E_1, \dots, E_k\},
		\]
		with $E_1, \dots, E_k$ being the prime divisors contracted by $g_i$. The $\mathcal{C}_i$'s are called the \textbf{Mori chambers} of $X$.
		\item Adjacent Mori chambers are related by a $D$-flip for some $D \in \Div(X)$.
	\end{enumerate}
\end{proposition}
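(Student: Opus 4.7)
The plan is to exploit the finiteness encoded in the definition of an MDS (conditions (2) and (3)) to get a concrete polyhedral chamber decomposition of $\Mov(X)$, and then extend this by adding rays of effective non-movable divisors to cover all of $\overline{\Eff}(X)$. My strategy is to first establish a dictionary between divisors, chambers, and models, and then read off all three claims from it.

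For claim (1), given a divisor $D \in \overline{\Eff}(X)$, I would locate $D$ in the movable chamber structure. Condition (3) of Definition \ref{MDS} gives an SQM $f_i\colon X \psmap X_i$ such that $f_{i*}D$ is nef on $X_i$, and by condition (2) applied to $X_i$, any nef divisor is semi-ample. To realise this SQM as an output of the $D$-MMP, I would show that each wall-crossing in the chamber decomposition of $\Mov(X)$ is a $D$-flip: pick a general divisor $D'$ in the current chamber whose limit on the wall gives a contraction $X_i \map W$; $D$-negativity of the extremal ray together with semi-ampleness on the wall produces the flip. Termination then follows from finiteness of the chambers. If $D$ lies outside $\Mov(X)$, I first contract its fixed part using a divisorial contraction, reducing to the movable case.

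For claim (2), I would argue that any SQM $g\colon X \psmap Y$ produces a chamber $g^*\Nef(Y) \subset \Mov(X)$ (pullback along a pseudo-isomorphism is an isomorphism on $\N^1$, and movable divisors stay movable), which must coincide with one of the $f_i^*\Nef(X_i)$ by (3) of Definition \ref{MDS}; hence $Y \isom X_i$. For the Mori chambers covering $\overline{\Eff}(X)$, I would pick $D \in \Eff(X)$, run the $D$-MMP via (1), and obtain a birational contraction $g_i\colon X \rmap Y_i$ with $g_{i*}D$ nef. The chamber $\mathcal{C}_i$ is then forced to be $g_i^*\Nef(Y_i) + \RR_{\geq 0}\{E_1,\dots,E_k\}$ because any divisor in this set has the same MMP output $Y_i$, and conversely any divisor with output $Y_i$ must have this form. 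Finiteness of the $g_i$ follows from finiteness of the SQMs and of the exceptional divisors one can contract from each $X_i$.

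Claim (3) then follows by a local analysis at a codimension-one wall between $\mathcal{C}_i$ and $\mathcal{C}_j$: picking a divisor $D$ interior to $\mathcal{C}_i$ with $D + \epsilon E$ crossing into $\mathcal{C}_j$ for some $E$, the corresponding MMP step is by construction a $D$-flip (or a flop, or an isomorphism, depending on where the wall sits relative to the nef cones of $Y_i$ and $Y_j$). The main obstacle is really the termination and semi-ampleness in step (1); everything else is a bookkeeping argument on cones. In the Hu-Keel setup, the genuine input doing the work is the finite generation of the Cox ring hidden in conditions (2) and (3), which converts the abstract existence statements of the MMP into the concrete polyhedral picture described above.
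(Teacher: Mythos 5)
The paper does not prove this statement: it is quoted directly from Hu--Keel, \cite[Proposition 1.11]{Hu-Keel}, and used as a black box, so there is no in-paper argument to compare yours against. Your sketch is a faithful reconstruction of the standard proof in that reference: the chamber decomposition of $\Mov(X)$ from condition $(3)$ of Definition \ref{MDS}, semi-ampleness of nef classes on each $X_i$ from condition $(2)$, wall-crossings realised as $D$-flips, and termination from finiteness of chambers. Two spots are thinner than they should be if this were written out in full. First, when $D$ is effective but not movable, the existence of the divisorial contraction of its fixed part is itself part of what claim $(1)$ asserts, so ``I first contract its fixed part'' cannot be taken for granted; one has to produce that contraction from the semi-ample divisor spanning the face of $\Mov(X)$ that the movable part of $D$ lands on. Second, the step ``$g^*\Nef(Y)=f_i^*\Nef(X_i)$ hence $Y\isom X_i$'' needs the observation that both nef cones are full-dimensional and that a common big and semi-ample class yields the same contraction from either side, identifying $Y$ with $X_i$ as the $\Proj$ of the corresponding section ring. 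Neither point is a wrong turn, just bookkeeping you have compressed; the overall route is the right one.
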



\begin{definition}
	A normal, projective and $\QQ$-factorial variety $X$ is called
	\begin{itemize}
		\item \textbf{weak Fano} if the anti-canonical divisor $-K_X$ is nef and big;
		\item \textbf{of Fano type} if there is an effective $\QQ$-divisor $\Delta$  such that the pair $(X,\Delta)$ is klt and $-(K_X + \Delta)$ is ample.
	\end{itemize}	
\end{definition}

\begin{lemma}\label{weak Fano implies Fano type}
	If $X$ is terminal and weak Fano, then $X$ is of Fano type.
\end{lemma}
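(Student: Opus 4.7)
The plan is to exploit bigness of $-K_X$ via Kodaira's lemma to write $-K_X \sim_\QQ A + E$ with $A$ an ample $\QQ$-divisor and $E$ an effective $\QQ$-divisor, and then to take $\Delta := \epsilon E$ for a sufficiently small rational $\epsilon > 0$. The candidate pair is therefore $(X, \epsilon E)$ for $\epsilon \ll 1$.

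First I would verify that $-(K_X + \Delta)$ is ample. By construction
\[
-(K_X + \epsilon E) \;\sim_\QQ\; -K_X - \epsilon E \;=\; (1-\epsilon)(-K_X) + \epsilon(-K_X - E) \;\sim_\QQ\; (1-\epsilon)(-K_X) + \epsilon A.
\]
For $\epsilon \in (0,1)$ the first summand is nef (since $X$ is weak Fano) and the second is ample, so their sum is ample. This is independent of how small $\epsilon$ is, so the only real constraint on $\epsilon$ will come from the klt condition.

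Next I would check that $(X, \epsilon E)$ is klt for $\epsilon$ small enough. Take a log resolution $f\colon Y \to X$ of $(X, E)$ and write $K_Y = f^*K_X + \sum a_i F_i$ with $a_i > 0$ by the terminality of $X$, and $f^*E = \widetilde E + \sum b_i F_i$ with $b_i \geq 0$. Subtracting, the discrepancies of $(X, \epsilon E)$ at the exceptional divisors are $a_i - \epsilon b_i$, and the discrepancy at the strict transform $\widetilde E$ is $-\epsilon$. Taking $\epsilon$ positive, rational and smaller than both $1$ and $\min_{b_i > 0}\tfrac{1 + a_i}{b_i}$, all these discrepancies exceed $-1$, so $(X, \epsilon E)$ is klt.

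I do not expect a real obstacle here: the argument is an essentially formal combination of Kodaira's lemma, convexity of the nef cone, and continuity of discrepancies in the coefficients of the boundary. The only slightly delicate point is ensuring that the boundary $\epsilon E$ one uses is actually effective with small enough coefficients to preserve klt, but this is handled automatically by taking $\epsilon$ sufficiently small (and is closely analogous to the situation described in Remark \ref{klt for any coefficient}).
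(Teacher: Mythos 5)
Your argument is correct and is essentially the same as the paper's: both decompose $-K_X \sim_{\QQ} A + E$ via Kodaira's lemma and take the boundary to be a small multiple $\epsilon E$ (the paper's $\tfrac{1}{k}E$), with ampleness of $-(K_X+\epsilon E)$ following from nef plus ample. Your explicit log-resolution verification of the klt condition just fills in the step the paper compresses into ``choose $k$ sufficiently large.''
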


\begin{proof}
	Since $X$ is weak Fano, by definition $-K_X$ is big. Thus is can be written as
	\[
	-K_X \sim A + E
	\]
	where $A$ and $E$ are ample and effective $\QQ$-divisors respectively (see \cite[Corollary 2.2.7]{Lazarsfeld1}). For any $k > 1$ we write
	\[
	k(-K_X) = (k - 1)(-K_X) + (-K_X) \sim (k-1)(-K_X) + A + E.
	\]
	Since $-K_X$ is nef, $A' \defeq (k-1)(-K_X) + A$ is ample and so $-(K_X + \frac{1}{k}E)$ is ample. Moreover we can choose $k$ sufficiently large such that the pair $(X,\frac{1}{k}E)$ is klt.
\end{proof}

\begin{proposition}[{\cite[Corollary 1.3.2]{BCHM}}]\label{Fano type is MDS}
	If $X$ is of Fano type then $X$ is a Mori Dream Space.
\end{proposition}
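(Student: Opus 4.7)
The plan is to verify the three defining properties of a Mori Dream Space (Definition \ref{MDS}) directly from the Fano type hypothesis. Fix an effective $\QQ$-divisor $\Delta$ such that $(X, \Delta)$ is klt and $-(K_X+\Delta)$ is ample.

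For property (1), $\QQ$-factoriality is part of the setup. For the equality $\Pic(X)_\QQ = \N^1(X)_\QQ$, I would apply Kawamata--Viehweg vanishing to the ample divisor $-(K_X+\Delta)$, which gives $H^1(X, \OO_X) = 0$ and therefore $\Pic^0(X) = 0$. Thus numerical and linear equivalence agree up to torsion, and property (1) follows after tensoring with $\QQ$.

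For property (2), the Cone Theorem for klt pairs describes $\overline{\NE}(X)$ as rational polyhedral on the $(K_X+\Delta)$-negative side. Ampleness of $-(K_X+\Delta)$ places the entire Mori cone on that side, so $\overline{\NE}(X)$ — and dually $\Nef(X)$ — is rational polyhedral with finitely many extremal rays. Given a nef divisor $D$, the class $D - (K_X + \Delta)$ is the sum of a nef and an ample class, hence ample, and the basepoint-free theorem for klt pairs then yields that $D$ is semi-ample.

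The hard part is property (3). The strategy is to run the MMP with scaling for the auxiliary pairs $(X, \Delta + \epsilon D)$ as $D$ ranges over movable divisor classes and $\epsilon > 0$ is chosen small enough to preserve kltness. By BCHM, each such MMP terminates and produces a log minimal model; the resulting birational map decomposes as an SQM followed by a contraction, contributing a chamber of the form $f_i^*\Nef(X_i)$ to $\Mov(X)$. The principal obstacle is showing that only finitely many such SQMs arise as $D$ varies: this rests on the finiteness of log minimal models for varying boundaries, which I would invoke as a black box from BCHM rather than reprove. Granted that finiteness, a standard wall-crossing argument shows that $\Mov(X)$ is exhausted by the chambers $f_i^*\Nef(X_i)$, matching Definition \ref{MDS}(3).
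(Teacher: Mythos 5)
The paper offers no proof of this statement: it is quoted verbatim as \cite[Corollary 1.3.2]{BCHM}, so the only question is whether your sketch is sound. Your treatment of properties (1) and (2) is fine: $\QQ$-factoriality is built into the paper's definition of Fano type, Kawamata--Viehweg vanishing applied with $D=0$ gives $H^1(X,\OO_X)=0$ and hence $\Pic(X)_{\QQ}=\N^1(X)_{\QQ}$, and the Cone and Basepoint-free theorems for the klt pair $(X,\Delta)$ give a rational polyhedral nef cone generated by semi-ample classes exactly as you say.

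The gap is in property (3), and it is the crux of the whole statement. For $D$ movable and $\epsilon>0$ small, the divisor $K_X+\Delta+\epsilon D\equiv -A+\epsilon D$ (with $A\defeq-(K_X+\Delta)$ ample) is \emph{anti-ample}, hence not pseudo-effective, so the $(K_X+\Delta+\epsilon D)$-MMP does not produce a log minimal model at all --- it terminates with a Mori fibre space --- and in any case a minimal model for this pair would make $-A+\epsilon D$ nef, not $D$ nef, so it would not yield a chamber of the form $f_i^*\Nef(X_i)$. The correct auxiliary boundary must absorb $-K_X$: one takes $\Theta_D\sim_{\QQ}\Delta+A+\epsilon D$ (perturbed so that $(X,\Theta_D)$ stays klt), so that $K_X+\Theta_D\sim_{\QQ}\epsilon D$ and the $(K_X+\Theta_D)$-MMP genuinely is a $D$-MMP; movability of $D$ then forces every step to be a flip, giving an SQM $f_i$ with $f_{i*}D$ nef, and finiteness of models as $\Theta_D$ varies in a rational polytope of klt boundaries (BCHM Theorem E / Corollary 1.1.5) gives the finiteness of the $f_i$. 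You should also record that each target $X_i$ again satisfies (1) and (2): being of Fano type is preserved under SQMs, so the arguments for (1) and (2) apply verbatim to $X_i$. As written, the auxiliary pair you chose does not do the job, so the argument for (3) does not go through.
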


Property \ref{Properties of MDS} (1) is a fundamental property of MDSs and allows us to play 2-ray games and possibly construct Sarkisov links.

\begin{proposition}\label{Induces link iff MDS}
	Let $C \subset \PP^3$ be a smooth curve and $X \defeq \Bl_C \PP^3 \map \PP^3$ the blowup of $\PP^3$ along $C$. Then $C$ induces a Sarkisov link if and only if $X$ is an MDS and for every birational contraction $f\colon X \rmap{}Y$ (see Definition \ref{SQM, BirContr}) such that $Y$ is $\QQ$-factorial, $Y$ is also terminal.
\end{proposition}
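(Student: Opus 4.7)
The plan is to exploit the fact that $X = \Bl_C \PP^3$ is smooth of Picard rank $2$, so $\Nef(X)$ is $2$-dimensional with one extremal ray contracting the exceptional divisor $E$ of $X \to \PP^3$ and the other initiating the $2$-ray game. Any candidate Sarkisov link starting from $\PP^3$ must be played along this second ray, so the proposition essentially amounts to verifying that the $2$-ray game runs and produces a valid Sarkisov diagram precisely under the stated hypotheses.

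For the forward direction, suppose $C$ induces a Sarkisov link. By the diagram \eqref{Sarkisov diagram}, the link factors through a central model which is either $\QQ$-factorial terminal Fano over $R$, or a flop of two $\QQ$-factorial terminal weak Fanos over $R$. Lemma \ref{weak Fano implies Fano type} and Proposition \ref{Fano type is MDS} then imply that the central model is an MDS, and since $X$ is connected to it by a sequence of flips, flops and anti-flips, $X$ appears as one of the SQMs in the central model's MDS structure and is therefore itself an MDS. For the terminal condition, because $\rho(X) = 2$ any birational contraction $f \colon X \rmap Y$ with $Y$ $\QQ$-factorial corresponds to a chamber of $\overline{\Eff}(X)$ already appearing in the Sarkisov diagram (as an SQM, or as a boundary divisorial contraction such as $X \to \PP^3$), and terminality of such $Y$ is exactly property (5) of Definition \ref{Sarkisov link}.

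For the backward direction, I construct the Sarkisov link by playing the $2$-ray game on $X$. Using Proposition \ref{Properties of MDS}(1), I run the MMP along the extremal ray of $\Nef(X)$ opposite to $E$, producing a finite chain $X = X_m \psmap X_{m-1} \psmap \cdots$ of flips, flops and anti-flips; each intermediate $X_i$ is $\QQ$-factorial (being an SQM of $X$) and terminal (by the hypothesis on $\QQ$-factorial contraction targets), and the chain ends at a boundary of $\overline{\Eff}(X)$ producing either a Mori fibration (Type I link) or a divisorial contraction onto a terminal Picard-rank-$1$ variety (Type II link).

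The step I expect to be the main obstacle is verifying condition (4) of Definition \ref{Sarkisov link} for the resulting Sarkisov-like diagram, namely the existence of the central model as described in \eqref{Sarkisov diagram}. The plan is to locate the central chamber of $\Mov(X)$ as the one whose associated SQM makes $-K$ nef relative to the base $R$, using the anticanonical MMP (available because $X$ is MDS) together with the terminality hypothesis to ensure all intermediate varieties remain $\QQ$-factorial and terminal, and to argue from the boundary behaviour of the $2$-ray game that the resulting central model is in fact (weak) Fano over $R$. Assembling these pieces yields an honest Sarkisov diagram and hence the sought Sarkisov link.
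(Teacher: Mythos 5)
Your proposal is correct and follows essentially the same route as the paper: the forward direction deduces the MDS property from the central model being terminal weak Fano, hence of Fano type, and uses $\rho(X)=2$ to place every $\QQ$-factorial contraction target inside the Sarkisov diagram where condition (5) forces terminality, while the backward direction runs the $2$-ray game via the MMP and uses the terminality hypothesis to upgrade the resulting Sarkisov-like diagram to an honest one. The only cosmetic difference is that you frame the final verification around condition (4) and the central model, whereas condition (4) (that $\chi$ is a pseudo-isomorphism) is automatic from the Mori-chamber structure of the movable cone, and the paper simply defers the remaining checks to \cite[Section 2F]{BLZ19}.
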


\begin{proof}
	Suppose that $C$ induces a Sarkisov link and let $X_0$, $Y_0$ be the varieties over the central model $Z/pt$ of the Sarkisov link
	\[
	\xymatrix@R=0.5cm{
		X_0 \ar[rd] \ar@{<..>}[rr] & & Y_0 \ar[ld] \\
		& Z \ar[d]\\
		& pt
	}
	\]
	so that $X_0 \psmap{}Y_0$ is a flop or an isomorphism.
	Then $X_0$ is a terminal weak Fano threefold, hence by Lemma \ref{weak Fano implies Fano type} of Fano type and in turn an MDS. By Proposition \ref{Properties of MDS} (2) as well as property $(3)$ of Definition \ref{MDS}, $X$ is also an MDS. 
	
	Let $g_1$, $g_2$ denote the birational contractions with targets the Mori fibre spaces $\PP^3/pt$ and $Y/T$. We claim that $\mathcal{F}_1 \defeq g_1^*\Nef(\PP^3)$ and $\mathcal{F}_2 \defeq g_2^*\Nef(Y)$ are extremal in the movable cone. Indeed, let $D$ be a divisor in $\mathcal{F}_i$ and denote by $E_i$ the exceptional divisor of $g_i$. Then $E_i$ is covered by infinitely many $g_i$-exceptional curves which are $E_i$-negative. For any $\kappa > 0$, $\kappa D + E_i$ is negative against all those curves and thus must contain them all and by extension $E_i$. Thus $\kappa D + E_i$ is not movable, proving the claim.	
	 Moreover, since $\mathcal{F}_1$ and $\mathcal{F}_2$ are 1-dimensional and $\rho(X) = 2$ they generate the movable cone of $X$. Thus, by (2) and (3) of Proposition \ref{Properties of MDS}, the Sarkisov link factors through all the SQMs $f_i\colon X \psmap{}X_i$ 
	 \footnote{uniquely in fact, since $N^1(X)$ is $2$-dimensional and so any path in the between $\mathcal{F}_1$ and $\mathcal{F}_2$ hits all other chambers in a unique order}
	 thus all of them appear in the Sarkisov diagram making all $X_i$ terminal. Moreover, the only birational contractions of $X$ with $\QQ$-factorial targets are $X \map{}\PP^3$ and $X \rmap{}Y$ both of which are terminal.
	
	Conversely, if $X$ is an MDS then we can run the so-called 2-ray game. This is explained in detail in \cite[Section 2F]{BLZ19}; here we give a rough idea. Choose an ample divisor $A$ on $X$ and run the $(-A)$-MMP. Any such MMP must terminate with a Mori fibre space. Since $\rho(X) = 2$  on the first step of the $(-A)$-MMP we have a choice between two $(-A)$-negative rays to contract. One MMP outputs $\PP^3/pt$, while the other outputs another Mori fibre space $Y/T$. 
	Now using the fact that the targets of all SQMs are terminal one can check that the diagram produced by this process satisfies all the properties of Definition \ref{Sarkisov link}.
\end{proof}

\subsection{Notation \& Setup}\label{setup and notation}

\begin{notation}\label{notation}
	Throughout the rest of the paper and unless otherwise stated, $C$ will be a smooth space curve and $\pi\colon X \map \PP^3$ will be the blow up of $\PP^3$ along $C$. We will denote by $f$ the numerical class of a fibre of $\pi$ over a point of $C$ and by $l$ the numerical class of the pullback of a general line in $\PP^3$. 
	
	Dually in $\N^1(X)$, we will denote by $E$ the class of the $\pi$-exceptional divisor and by $H$ the class of the pullback of a general hyperplane in $\PP^3$. These classes generate their respective vector spaces and the intersection matrix is determined by the relations: $H \cdot l = 1$, $H\cdot f = E\cdot l = 0$ and $E\cdot f = -1$.	
	Note that in this notation we have the relation: $K_X \sim -4H + E$.
\end{notation}

The Mori cone $\overline{\NE}(X)$ of $X$ is a two dimensional cone, with one extremal ray generated by $f$ which is $K_X$-negative. Thus, whether or not $X$ is a weak Fano is determined by the sign of the second generating ray relative to the canonical divisor.

By abuse of notation, we will also denote by $H$ and $l$ the classes of a hyperplane and a line in $\N^1(\PP^3)$ and $\N_1(\PP^3)$ respectively.

\begin{lemma}\label{curves}
	The Mori cone $\overline{\NE}(X)$ is spanned by two extremal rays; the first generated by $f$ and the second by the class $l-rf$ with $r \in \RR$ maximal among the pseudo-effective classes.
	
	Dually, $\overline{\Eff}(X)$ is spanned by the two extremal rays; the first generated by $E$ and the second by the class $H -  rE$ with $r\in \RR$ maximal among the pseudo-effective classes.
\end{lemma}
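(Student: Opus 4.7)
The plan is to leverage the equality $\rho(X)=2$: both $N_1(X)_{\RR}$ and $N^1(X)_{\RR}$ are two-dimensional, so the closed, convex, full-dimensional cones $\overline{\NE}(X)$ and $\overline{\Eff}(X)$ are each spanned by exactly two extremal rays, and it only remains to identify them.

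For $\overline{\NE}(X)$, I would first show that $\RR_{\geq 0}f$ is extremal via the pushforward $\pi_*\colon N_1(X)_{\RR}\to N_1(\PP^3)_{\RR}=\RR l$. If $f=\gamma_1+\gamma_2$ with $\gamma_i\in\overline{\NE}(X)$, then $\pi_*(\gamma_1)+\pi_*(\gamma_2)=0$ inside $\overline{\NE}(\PP^3)=\RR_{\geq 0}l$ forces $\pi_*(\gamma_i)=0$, so $\gamma_i\in\ker(\pi_*)\cap\overline{\NE}(X)=\RR_{\geq 0}f$; the last equality uses that $A\cdot f>0$ for any ample divisor $A$ on $X$, whence $-f$ cannot be pseudo-effective. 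For the second ray, any class $\gamma\in\overline{\NE}(X)$ not proportional to $f$ has $\pi_*(\gamma)=d\,l$ with $d>0$, so $\gamma=dl-mf$ for some $m$. Parametrising the cone as $sf+t(d_0l-m_0f)$ with $s,t\geq 0$, where $d_0l-m_0f$ generates the unknown second ray, a short computation translates the condition $dl-mf\in\overline{\NE}(X)$ into $m/d\leq m_0/d_0$. Hence the second extremal ray is characterised by maximality of $m/d$ among pseudo-effective classes of this form, and the supremum is attained by closedness of $\overline{\NE}(X)$.

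The argument for $\overline{\Eff}(X)$ is entirely symmetric. To show $\RR_{\geq 0}E$ is extremal, writing $E=D_1+D_2$ with $D_i=a_iH+b_iE$ pseudo-effective, pushing forward gives $a_i\geq 0$ (since $\pi_*D_i=a_iH$ must lie in $\overline{\Eff}(\PP^3)=\RR_{\geq 0}H$), and $a_1+a_2=0$ forces $a_i=0$, so $D_i=b_iE$. That $-E$ is not pseudo-effective---needed to conclude $b_i\geq 0$---I would establish by exhibiting a movable curve pairing negatively with $-E$: the strict transform of a general line through a smooth point of $C$ in a direction not tangent to $C$ meets $E$ transversally in a single fibre and so has class $l-f$, and such curves sweep out $X$ as one varies the line and the base point; pseudo-effective divisors must be non-negative on movable classes, yet $(-E)\cdot(l-f)=-1$. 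The second extremal ray of $\overline{\Eff}(X)$ is then identified by the identical maximum-slope argument as $dH-mE$ with $m/d$ maximal among pseudo-effective classes.

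The only real subtleties are the non-pseudo-effectivity of $-f$ and $-E$ (handled by ample intersection and by the movable-curve argument respectively) and the attainment of the two suprema (immediate from the closedness of the cones); everything else is routine intersection calculus using only $H\cdot l=1$, $H\cdot f=E\cdot l=0$ and $E\cdot f=-1$.
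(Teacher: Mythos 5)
Your proof is correct and follows essentially the same route as the paper's: identify $f$ (resp.\ $E$) as one extremal ray and characterise the other by maximality of the slope $m/d$, using the two-dimensionality of $N_1(X)$ and $N^1(X)$. The only cosmetic differences are that you establish extremality of $f$ via the pushforward $\pi_*$ rather than via contractibility of the fibres, and you write out the dual statement (including the movable-curve argument that $-E$ is not pseudo-effective) which the paper leaves as ``similar''.
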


\begin{proof}
	It is clear that $f$ generates an extremal ray since it is the fibre of a contraction. Let $l - sf$ be a pseudo-effective class, with $s \leq r$, then clearly
	\[
	l - sf = l-rf  + (r-s)f.
	\]
	The proof is similar for the dual statement.
\end{proof}

\begin{remark}\label{class}
	The effective representatives of an effective class $dH - mE$, with $d,m \in \NN$, are strict transforms of surfaces of degree $d$ having multiplicity $m$ along $C$.	
	
	Similarly, the effective representatives of an effective class $dl - mf$ which do not lie on $E$ are strict transforms of curves of degree $d$ meeting $C$ at $m$ points counted with multiplicities, that is, the scheme theoretic intersection of $C$ with such a representative is a $0$-dimensional scheme of length $m$.
	
	We will call such a curve an $\boldsymbol{m}$\textbf{-secant curve} to $C$ and when $d = 1$, an $\boldsymbol{m}$\textbf{-secant line}, when $d=2$ an $\boldsymbol{m}$\textbf{-secant conic} and so on.
\end{remark}

We note that if $C$ induces a Sarkisov link then by Proposition \ref{Induces link iff MDS} $X$ must be an MDS and thus its cone of curves is closed and rationally generated. In that case the second extremal ray is generated, over $\QQ$, by a class of the form $dl-mf$ with $\frac{m}{d}$ maximal among all effective classes.

\subsection{Curves in planes or quadrics}
We now show that if $C$ lies on a plane or quadric, then it induces a Sarkisov link if and only if the blowup $\Bl_C\PP^3$ is Fano. All such curves have been classified in \cite{WeakFanos} and so any open cases cannot lie on surfaces of degree $1$ or $2$. But first we prove a result which is essential for the rest of the paper.

\begin{proposition}\label{infinte K-positive curves}
	Let $\chi\colon Y \psmap{} Y'$ be an anti-flip between $\QQ$-factorial terminal 3-folds, $z$ be a curve in $Y$ which is not in the indeterminacy locus of $\chi$ and $z'$ be its strict transform under $\chi$. Then 
	\[
	K_Y\cdot z \leq K_{Y'}\cdot z'.
	\]
	In particular, if $X$ is the blowup of a Fano threefold of Picard rank 1 such that it admits an infinite number of $K_X$-non-negative curves then $X$ does not fit into a Sarkisov link.
\end{proposition}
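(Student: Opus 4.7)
My approach is to pull both sides of the inequality back to a common resolution of $\chi$ and read off the sign from the discrepancy difference produced by the anti-flip. Concretely, I would fix a common resolution $p \colon W \to Y$, $q \colon W \to Y'$ of $\chi$. Since $\chi$ is an anti-flip, its inverse is a flip, so the standard monotonicity of discrepancies under a flip gives $a(E,Y) \geq a(E,Y')$ for every divisor $E$ over $Y$. Writing
\[
K_W \;=\; p^*K_Y + \sum_i a(E_i,Y)\,E_i \;=\; q^*K_{Y'} + \sum_i a(E_i,Y')\,E_i
\]
and subtracting yields
\[
q^*K_{Y'} - p^*K_Y \;=\; \sum_i \bigl(a(E_i,Y) - a(E_i,Y')\bigr)\,E_i,
\]
an effective divisor supported on $\Exc(p)\cup\Exc(q)$. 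Now let $\tilde z \subset W$ be the strict transform of $z$ under $p$: the hypothesis that $z$ is not in the base locus of $\chi$ forces $\tilde z \not\subset \Exc(p)$, and since $z' = q_*\tilde z$ is one dimensional, also $\tilde z \not\subset \Exc(q)$. Intersecting the effective divisor above with $\tilde z$ and applying the projection formula gives $K_{Y'}\cdot z' - K_Y\cdot z = (q^*K_{Y'} - p^*K_Y)\cdot \tilde z \geq 0$, which is the claimed inequality.

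For the "in particular" part, I would argue by contradiction. Assume $X$ is the blowup of a Fano threefold of Picard rank $1$ admitting infinitely many $K_X$-non-negative curves and that $X$ fits into a Sarkisov diagram as in (\ref{Sarkisov diagram}). Its left half is a finite sequence of anti-flips $X = X_m \dashrightarrow \cdots \dashrightarrow X_0$ ending at the terminal weak Fano $X_0$, and each anti-flip has a one dimensional base locus. Since every composition $X \dashrightarrow X_i$ is a pseudo-isomorphism, only finitely many curves of $X$ have a strict transform hitting one of these base loci. Picking a $K_X$-non-negative curve $z \subset X$ outside this finite set and iterating the first part along the chain gives $K_{X_0}\cdot z_0 \geq K_X \cdot z \geq 0$ for its iterated strict transform $z_0 \subset X_0$. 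Since $-K_{X_0}$ is nef, equality must hold, so $z_0$ lies in the exceptional locus of the anti-canonical small contraction $X_0 \to Z$. This locus is one dimensional (and empty if $X_0$ is already Fano), hence contains only finitely many irreducible curves. Because $X \dashrightarrow X_0$ is an isomorphism in codimension one, distinct $z$'s yield distinct $z_0$'s, contradicting the infinitude hypothesis.

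The step I expect to be most delicate is the bookkeeping in the second part: verifying that the set of curves in $X$ whose iterated strict transform hits some intermediate base locus is indeed finite, so that the inequality of the first part can be chained all the way to $X_0$. Once this is established, the argument reduces to the finiteness of the flopping locus of the terminal weak Fano $X_0$, which is a direct consequence of the fact that a small contraction of a threefold has at most one dimensional exceptional locus.
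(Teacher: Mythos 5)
Your proposal is correct and follows essentially the same route as the paper: the inequality is obtained from a common resolution together with monotonicity of discrepancies under the (inverse) flip, and the second assertion by transporting the infinite family of $K$-non-negative curves to the weak Fano model $X_0$ over the central model and contradicting the finiteness of its flopping locus (the paper merely splits off the case where $-K_X$ is already nef, which your uniform argument subsumes). One small point: the bookkeeping you flag as delicate is in fact immediate, since the first part only requires $z$ not to be \emph{contained} in the base locus of each anti-flip --- not disjoint from it --- and each base locus contains only finitely many irreducible curves, so only finitely many curves of $X$ are excluded along the whole chain.
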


\begin{proof}
	Let
	\[
	\xymatrix{
		& W \ar[ld]_p \ar[rd]^q\\
		Y \ar@{..>}[rr]^{\chi}&& Y'
	}
	\]
	be a resolution of indeterminacies of $\chi$. Writing the ramification formulas for $p$  and $q$ and comparing them we get
	\begin{gather}\label{ram}
	q^*K_{Y'} = p^*K_{X_{Y}} + (E_p - E_q),
	\end{gather}
	where $E_p$ and $E_q$ are sums of $p$ and $q$-exceptional divisors, respectively. Since $\chi$ is a pseudo-isomorphism, $\Supp E_p = \Supp E_q$. Moreover, the discrepancies decrease under an anti-flip (see \cite[Lemma 9.1.3]{Matsuki}) and so $E_p - E_q \geq 0$.	
	Let $z_W \subset W$ be the curve dominating both $z$ and $z'$. Then
	\[
	K_{Y'}\cdot z' = q^*K_{Y'}\cdot z_W = p^*K_{Y}\cdot z_W + (E_p-E_q)\cdot z_W \geq p^*K_{Y}\cdot z_W = K_{Y}\cdot z.
	\]
	
	Suppose now that $X$ is as described in the statement. Assuming $X$ fits into a Sarkisov diagram we will derive a contradiction. $\overline\NE(X)$ has two extremal rays, one of which, which we will denote by $R$, is $K_X$-non-negative and the other corresponding to the blowup morphism is $K_X$-negative. Since $X$ fits into a Sarkisov diagram, $R$ must also be contractible. We distinguish two cases. 
	
	If $-K_X$ is nef then all the $K_X$-non-negative curves are actually $K_X$-trivial curves. In that case $R$ has to contain all of the infinitely many $K_X$-trivial curves thus the contraction $f\colon X \map Y$ of $R$ is not small. If it is not divisorial then the induced diagram cannot be a Sarkisov link. Finally, if $f$ is divisorial, writing the ramification formula for $f$ and intersecting both sides with a contracted curve we get that $X$ has canonical but not terminal singularities, giving us a contradiction to the existence of the Sarkisov diagram.
	
	On the other hand if $-K_X$ is not nef, playing the $2$-ray game on $X$ and after a finite number of $K_X$-positive steps, i.e.\ anti-flips, we must arrive to the variety $X_0$ lying over the central model of the link which is either Fano or weak-Fano. By the first part of the proposition, $X_0$ must still contain an infinite number of $K_{X_0}$-non-negative curves. If $X_0$ is Fano this is already a contradiction. If $X_0$ is weak-Fano then the contraction of the $K_{X_0}$-trivial ray of $\overline\NE(X_0)$ must be small, i.e.\ contain a finite number of curves, again a contradiction.
\end{proof}

The following statement follows implicitly from \cite{WeakFanos}.

\begin{proposition}\label{non-existence for planes and quadrics}
	Let $C$ be a smooth curve contained in a plane or quadric. If $C$ induces a Sarkisov link, then $X$ is weak Fano.
\end{proposition}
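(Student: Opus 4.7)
The plan is to prove the contrapositive: assuming $X$ is not weak Fano, I would produce an infinite family of $K_X$-non-negative curves on $X$ and then invoke Proposition \ref{infinte K-positive curves} to conclude that $C$ does not induce a Sarkisov link.

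Let $S \subset \PP^3$ denote the irreducible surface of degree $s \in \{1,2\}$ containing $C$. In every case $S$ is swept out by a positive-dimensional family of lines: a two-parameter family if $S$ is a plane, the two rulings if $S$ is a smooth quadric, and the one-parameter family of generators if $S$ is a quadric cone. For a general line $\ell$ in such a family, the strict transform $\tilde\ell \subset X$ has numerical class $l - m f$, where $m$ is the length of the scheme-theoretic intersection $\ell\cap C$. A direct computation gives $m = \deg C$ for a line in a plane, $m\in\{a,b\}$ (according to the ruling) for a smooth quadric on which $C$ has bidegree $(a,b)$, and $m = \tfrac{1}{2}\deg C$ for lines on a quadric cone whose vertex does not lie on $C$, with an analogous description when the vertex lies on $C$. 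Write $m^\star$ for the largest value of $m$ arising from any of these rulings.

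The heart of the argument is to identify the non-$f$ extremal ray of $\overline{\NE}(X)$. By Lemma \ref{curves} this ray is spanned by a class $dl - \mu f$ realising the maximal slope $\mu/d$ among pseudo-effective cycles. B\'ezout supplies the required upper bound: any irreducible curve $D\subset\PP^3$ of degree $k$ not contained in $S$ meets $C$ in at most $D\cdot S = sk$ points, so the strict transform of $D$ has slope at most $s\le 2$; while a curve contained in $S$ intersects $C$ in a non-negative $\ZZ$-linear combination of the ruling-intersection numbers, giving slope at most $m^\star$. Taken together, these bounds identify the extremal ray as $l - m^\star f$, generated by the family of lines that realises $m^\star$.

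Since $K_X \cdot(l - m^\star f) = m^\star - 4$, nefness of $-K_X$ is equivalent to $m^\star \le 4$. A routine computation of $(-K_X)^3$ using the blowup formula excludes the failure of bigness in every degree--genus pair compatible with $C\subset S$ of degree at most $2$, so $X$ fails to be weak Fano precisely when $m^\star \ge 5$. In that regime the full positive-dimensional family of lines realising $l - m^\star f$ supplies infinitely many $K_X$-non-negative curves on $X$, and Proposition \ref{infinte K-positive curves} forbids $C$ from inducing a Sarkisov link. The principal obstacle is the quadric-cone case when $C$ passes through the vertex $v$: the generators then acquire extra intersection with $C$ at $v$ and the B\'ezout accounting must be performed on the minimal resolution $\FF_2 \to Q$ before being pushed back down; once this is handled, the plane and smooth-quadric cases go through uniformly.
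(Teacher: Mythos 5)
Your proposal is correct in outline and terminates exactly where the paper's proof does: an infinite family of $m$-secant lines with $m\geq 5$, whose strict transforms are $K_X$-positive by Remark \ref{class}, followed by an appeal to Proposition \ref{infinte K-positive curves}. The difference is in how that family is produced. The paper simply cites \cite[Proposition 3.1]{WeakFanos}, where it is proved that a curve on a plane or quadric whose blowup is not weak Fano admits infinitely many $m$-secant lines with $m\geq 5$; you instead reconstruct this input directly from the rulings of $S$ and a B\'ezout bound, which makes the statement self-contained at the cost of the case analysis you defer (the quadric cone through the vertex, and the verification that $(-K_X)^3>0$ whenever $m^\star\leq 4$ --- the latter does check out against Lemma \ref{cube anticanonical}). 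One step you should tighten: in pinning down the non-$f$ extremal ray you only account for strict transforms of curves of $\PP^3$, but by Lemma \ref{curves} the maximal slope could a priori be computed by a curve lying inside the exceptional divisor $E$. The clean repair is the device of Proposition \ref{lines are enough}: any class $dl-\mu f$ with $\mu/d>s$ meets the strict transform of $S$ negatively, hence every irreducible curve in that class (including those in $E$) is contained in it, and the cone of curves of $S$ --- or of the minimal resolution $\FF_2$ in the cone case --- then expresses the class as a non-negative combination of lines of the rulings. With that adjustment your argument is a correct, self-contained proof of the fact the paper imports from \cite{WeakFanos}.
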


\begin{proof}
	If $C$ is contained in a plane or quadric, in \cite[Proposition 3.1]{WeakFanos} the authors prove that if the blowup of $X$ is not weak Fano, then $C$ admits an infinite number of $m$-secants lines, with $m\geq 5$. By Remark \ref{class}, their class in $N_1(X)$ is $l-mf$ and thus correspond to $K_X$-positive curves. By Proposition \ref{infinte K-positive curves}, we conclude that $C$ does not induce a Sarkisov link.
\end{proof}

\begin{remark}\label{BL lists are exhaustive}
	We note that, by Proposition \ref{non-existence for planes and quadrics}, the lists of \cite{WeakFanos} are exhaustive in the cases where the curve is contained in a plane or quadric. That is, every curve that lies on plane or quadric whose blowup induces a Sarkisov link is studied and appear in their lists.
\end{remark}

\section{Curves on smooth cubic surfaces}\label{Curves on cubic surfaces}

The goal of this section is to give necessary conditions for curves lying on smooth cubic surfaces to induce a Sarkisov link.

If $S$ is the blowup of $\PP^2$ along $6$ points $p_1,\dots, p_6$ in general position (i.e. no $3$ on a line and no $6$ on a conic), then $-K_S$ is ample and the anti-canonical system gives an embedding in $\PP^3$ where the image is a smooth cubic surface. Conversely, any smooth cubic surface in $\PP^3$ is obtained like that (see \cite[Section 4]{WeakFanos}). In the following we will denote the image of $S$ under the anti-canonical system again by $S$ and when there is no confusion we will not distinguish between the two.

We recall some basic facts about curves on $S$. Denote by $L$ the numerical class of the pullback of a general line under $S \map \PP^2$ and by $F_i$ the class of the fibre over $p_i$. These classes generate $\N^1(S)$. The intersection form is given by the diagonal $(1,-1,\dots,-1)$. We will say that a 1-cycle $z \in \N_1(S)$ is of \textbf{type} $(k;\,m_1, m_2, m_3, m_4, m_5, m_6)$ if its class is numerically equivalent to $kL -\sum m_1F_1$. The canonical class on $S$ is then of type $(-3;\, 1,\dots ,1)$.  Moreover the cone of curves $\NE(S)$ is closed and generated by the classes of the $(-1)$-curves. There are 27 such classes corresponding to the 6 exceptional curves, the strict transforms of the 15 lines through two of the points or the $6$ conics through five of the points. Their classes are respectively $F_i$, $L- F_i - F_j$ and $2L - F_1 -\dots - F_6 + F_i$. Their images are lines in $\PP^3$ which we will denote by $e_i$, $l_{i,j}$ and $c_i$ respectively.

We stick to the notation introduced in Notation \ref{notation}. For the following statements we will also always assume that $C$ lies on a smooth cubic surface $S$.

\begin{proposition}\label{lines are enough}
	Let $\gamma$ be a curve in $X$ such that $\gamma \sim dl-mf$ with $\frac{m}{d} > 3$. Then $C$ admits an $n$-secant line with $n \geq \frac{m}{d}$. In particular, $X$ is weak Fano if and only if $C$ admits no $m$-secant line with $m \geq 5$.
\end{proposition}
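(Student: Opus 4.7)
The plan is to reduce the problem to the cubic surface $S$ itself and then exploit the fact that $\NE(S)$ is generated by the $27$ lines to extract, via a weighted-average argument, a single line with the required intersection bound.

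By Remark \ref{class} I can identify $\gamma$ with the strict transform of some degree-$d$ curve $\gamma_0 \subset \PP^3$ satisfying $\mathrm{length}(\gamma_0 \cap C) = m$. The first step is to show $\gamma_0 \subset S$: otherwise B\'ezout yields $\mathrm{length}(\gamma_0 \cap S) = 3d$, and since $\gamma_0 \cap C \subset \gamma_0 \cap S$ this would force $m \leq 3d$, contradicting $m/d > 3$. Note that $\gamma_0$ cannot contain $C$ as a component, since $m$ is finite by assumption.

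Once $\gamma_0 \subset S$, I would write its numerical class in $N_1(S)$ as $\sum b_i \ell_i$ with $b_i \geq 0$, using that $\NE(S)$ is the closed cone generated by the classes of the $27$ lines $\ell_i$. Intersecting with the ample divisor $-K_S$ (which recovers the degree in $\PP^3$ under the anti-canonical embedding) and with $C$, and using that each $\ell_i$ is a $(-1)$-curve so $-K_S \cdot \ell_i = 1$, gives
\[
d \;=\; -K_S \cdot \gamma_0 \;=\; \sum_i b_i, \qquad m \;=\; \gamma_0 \cdot_S C \;=\; \sum_i b_i\,(\ell_i \cdot C),
\]
so $m/d$ is a weighted average of the numbers $\ell_i \cdot C$. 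A pigeonhole argument then produces some line $\ell_{i_0}$ with $\ell_{i_0} \cdot C \geq m/d$; since $m/d > 3 > -1$ we have $\ell_{i_0} \neq C$, so this intersection number agrees with $\mathrm{length}(\ell_{i_0} \cap C)$, yielding the desired $n$-secant line.

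For the ``in particular'' statement, a direct computation gives $K_X \cdot (dl - mf) = m - 4d$, so by Lemma \ref{curves} the threefold $X$ is weak Fano iff the second extremal ray of $\NE(X)$ satisfies $m/d \leq 4$. If $X$ is not weak Fano, applying the main claim to this extremal class produces a line with $n \geq m/d > 4$, hence $n \geq 5$; conversely, an $n$-secant line with $n \geq 5$ has strict transform of class $l - nf$ with $K_X \cdot (l - nf) = n - 4 > 0$, forbidding $-K_X$ from being nef. I expect the only real subtlety to be setting up the containment $\gamma_0 \subset S$ cleanly and checking that the intersection-theoretic computation on $S$ genuinely matches the length-theoretic definition of $m$; once those are in place the rest is bookkeeping on the del Pezzo surface.
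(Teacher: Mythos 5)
Your argument is in essence the paper's: both proofs confine $\gamma$ to the cubic, decompose its class in the cone of curves of $S$, which is generated by the $27$ lines, and extract a single line by averaging. The paper simply works upstairs on the strict transform $T$ of $S$ (deducing $\gamma\subset T$ from $T\cdot\gamma=3d-m<0$ and pairing the decomposition with $E$ rather than with $C$), which is only a cosmetic difference from your B\'ezout reduction and your pairing with $C$ on $S$.

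There is, however, one genuine gap at the very first step. Remark \ref{class} identifies effective classes $dl-mf$ with degree-$d$, $m$-secant curves only for curves \emph{not contained in} $E$, whereas the proposition (and its later uses, e.g.\ in Proposition \ref{X is an MDS}) must also cover irreducible curves $\gamma\subset E$. For such a $\gamma$ the image $\pi(\gamma)$ is $C$ itself, so neither your B\'ezout step nor the equality $m=\mathrm{length}(\gamma_0\cap C)$ makes sense; your remark that ``$\gamma_0$ cannot contain $C$ since $m$ is finite'' conflates the coefficient $m$ of the numerical class with an intersection length. The case is not vacuous: if $m/d>3$ then $T\cdot\gamma<0$ forces $\gamma\subset T\cap E$, and this section of $E\to C$ has class $(\deg C)\,l-(\deg C+2g-2)f$, which satisfies $m/d>3$ as soon as $g>\deg C+1$ (already possible for curves on smooth cubics of degree $\geq 30$). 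The repair is exactly the paper's device: use $T\cdot\gamma<0$ to place $\gamma$ on $T\cong S$ in all cases (under this isomorphism $T\cap E$ corresponds to $C$, whose class you can still decompose into lines), and compute the secancies by intersecting with $E$ upstairs instead of with $C$ downstairs. Everything else, including your treatment of the ``in particular'' clause, is sound.
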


\begin{proof}
	We write $T$ for the strict transform of $S$. Then $T$ and $S$ are isomorphic and
	\[
	T \cdot \gamma = (3H - E)(dl-mf) = 3d - m < 0
	\]
	thus $\gamma$ is contained in $T$. The cone of curves of $T$ is generated by the strict transforms of the 27 lines $l_i$ in $S \subset \PP^3$ and so we can write
	\[
	\gamma \sim_{\scriptscriptstyle T} l_1 + \dots + l_k \implies \gamma \equiv_{\scriptscriptstyle{X}} l_1 + \dots + l_k.
	\]
	Intersecting with the restriction of a general hyperplane of $\PP^3$ we get that $k = d$. Intersecting with $E$ we get
	\[
	E(l_1 + \dots + l_d) = m.
	\]
	Therefore, at least one of the lines $l_i$ intersects $E$ at $n$ points counted with multiplicity, with $n \geq \frac{m}{d}$.
\end{proof}

By the previous lemma we see that the property of $X$ not being weak Fano is equivalent to the existence of at least one $m$-secant line with $m>4$. On the other hand, the following, we prove that $C$ must not admit ``too many'' in a sense such lines.

\begin{lemma}\label{conic bundle}
	Let $l_1$, $l_2$ be two distinct, intersecting lines on a smooth cubic surface $S$. Then there exists a pencil of conics $\mathcal{C}$ on $S$ such that each element $c \in \mathcal{C}$ is linearly equivalent to $l_1 + l_2$.
\end{lemma}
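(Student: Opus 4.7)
The plan is to exploit the fact that two distinct intersecting lines in $\PP^3$ span a unique plane, which cuts out a hyperplane section of $S$ containing both lines.

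First I would observe that the plane $\Pi$ spanned by $l_1$ and $l_2$ intersects $S$ in a cubic curve containing $l_1 \cup l_2$. Since degrees add up to $3$, the residual part is a line $l_3$ on $S$, so that $\Pi \cap S = l_1 + l_2 + l_3$ (with $l_3$ possibly equal to one of $l_1, l_2$ in degenerate configurations, but the rest of the argument is insensitive to this). As $\Pi \cap S$ is a hyperplane section it represents $-K_S$, and hence
\[
l_1 + l_2 \sim -K_S - l_3.
\]

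Next I would construct the pencil as residual curves to $l_3$ inside the anti-canonical system. The hyperplanes of $\PP^3$ containing the fixed line $l_3$ form a $\PP^1$-family, and each such hyperplane $\Pi'$ cuts $S$ along a cubic curve of the form $l_3 + c_{\Pi'}$, where $c_{\Pi'}$ is an effective divisor of degree $2$ on $S$, i.e.\ a conic. Subtracting the common component $l_3$ yields a $1$-parameter family $\mathcal{C} = \{c_{\Pi'}\}$ of conics, each lying in the class $-K_S - l_3 \sim l_1 + l_2$, which is the required pencil.

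Finally, to confirm that $\mathcal{C}$ is a full linear system and not a proper subfamily, I would verify $h^0(S, l_1+l_2) = 2$ by Riemann--Roch: $(l_1+l_2)^2 = -1 + 2 - 1 = 0$ and $(l_1+l_2)\cdot(-K_S) = 2$, so $\chi(l_1+l_2) = 2$, and $h^2$ vanishes by Serre duality because $K_S - (l_1+l_2)$ pairs negatively with the ample class $-K_S$. Alternatively, surjectivity of the restriction $H^0(S,-K_S) \twoheadrightarrow H^0(l_3,\OO(1))$ (which follows from the very ampleness of $-K_S$) gives $\dim |-K_S - l_3| = 3 - 2 = 1$ directly. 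There is no real obstacle here; the only subtlety is the marginal one of whether $l_3 \in \{l_1, l_2\}$, which does not affect the construction of the pencil.
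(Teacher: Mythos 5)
Your argument is exactly the paper's proof: take the residual line $l_3$ of the plane spanned by $l_1$ and $l_2$, and use the pencil of planes through $l_3$ to produce the residual pencil of conics in the class $-K_S - l_3 \sim l_1 + l_2$. The additional Riemann--Roch verification and the worry about $l_3 \in \{l_1, l_2\}$ (which in fact cannot occur on a smooth cubic, as it would force two distinct lines to meet with multiplicity $3$) are fine but not needed.
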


\begin{proof}
	Denote by $P_1$ the unique plane containing both $l_1$ and $l_2$, and let $l_3$ be the residual line of the intersection of $P_1$ with $S$. Then the pencil of planes containing $l_3$ gives us a residual pencil of conics on $S$ which are linearly equivalent to $l_1 + l_2$.
\end{proof}

\begin{lemma}\label{term}
	Suppose that $C$ admits two distinct, $m_1$ and $m_2$-secant lines $l_1$ and $l_2$ which intersect each other. If $m_1+m_2 \geq 8$, then $C$ does not induce a Sarkisov link.
\end{lemma}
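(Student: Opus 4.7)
The plan is to combine Lemma \ref{conic bundle} with Proposition \ref{infinte K-positive curves}: the two intersecting multisecant lines produce a whole pencil of multisecant conics on $S$, and the degree--secancy bound $m_1+m_2\geq 8$ forces their strict transforms in $X$ to be $K_X$-non-negative, giving the infinitely many $K_X$-non-negative curves that obstruct the existence of a Sarkisov link.

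First, I would apply Lemma \ref{conic bundle} to the two distinct intersecting lines $l_1,l_2$ to produce a pencil $\mathcal{C}$ of conics on $S$, every member of which is linearly equivalent on $S$ to $l_1+l_2$. Since $C\subset S$, intersection numbers of curves in $S$ with $C$ may be computed on $S$, so for any $c\in\mathcal{C}$ one has
\[
c\cdot_S C \;=\; (l_1+l_2)\cdot_S C \;=\; m_1+m_2.
\]
Because $S$ is a smooth surface in $\PP^3$ and the generic $c\in\mathcal{C}$ meets $C$ properly, the scheme-theoretic intersection $c\cap C$ computed on $S$ agrees with the one computed in $\PP^3$, so by Remark \ref{class} the generic member of $\mathcal{C}$ is an $(m_1+m_2)$-secant conic to $C$ in the sense of that remark. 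Its strict transform $\tilde c\subset X$ therefore has numerical class $2l-(m_1+m_2)f$.

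Next I would compute the intersection with the canonical class using $K_X\sim -4H+E$:
\[
K_X\cdot\tilde c \;=\; (-4H+E)\bigl(2l-(m_1+m_2)f\bigr) \;=\; -8 + (m_1+m_2) \;\geq\; 0,
\]
the last inequality being the hypothesis. Since $\mathcal{C}$ is a pencil and its generic member is an irreducible conic not contained in the exceptional divisor $E$, the strict transforms $\tilde c$ as $c$ varies over $\mathcal{C}$ form an infinite family of pairwise distinct irreducible curves in $X$, each of which is $K_X$-non-negative. By Proposition \ref{infinte K-positive curves} applied to $X=\Bl_C\PP^3$ (which is the blowup of the Fano threefold $\PP^3$ of Picard rank $1$), we conclude that $C$ does not induce a Sarkisov link.

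The only potential technical point is verifying that the pencil genuinely produces infinitely many distinct curves in $X$ and that their intersections with $C$ are as prescribed scheme-theoretically; this is however routine since the generic member of the pencil is smooth, irreducible and meets $C$ transversally away from the finitely many base points, so the identification between intersection on $S$ and on $\PP^3$ is immediate. The conceptual content of the proof is just the observation that two intersecting multisecant lines automatically spread into a pencil of multisecant conics whose class in $X$ crosses the canonical wall as soon as $m_1+m_2\geq 8$.
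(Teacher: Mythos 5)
Your proposal is correct and follows essentially the same route as the paper: Lemma \ref{conic bundle} yields a pencil of conics, each an $(m_1+m_2)$-secant to $C$, whose strict transforms form an infinite family of $K_X$-non-negative curves, and Proposition \ref{infinte K-positive curves} then rules out a Sarkisov link. The paper's proof is just a terser version of yours, leaving the intersection computation $K_X\cdot\tilde c = m_1+m_2-8\geq 0$ implicit.
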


\begin{proof}	
	By Lemma \ref{conic bundle} there is a pencil of conics such that each element intersects $C$ at $m_1 + m_2$ points counted with multiplicity. Their strict transforms on $X$ give us an infinite family of $K_X$-non-negative curves. We conclude by Corollary \ref{infinte K-positive curves}.
\end{proof}

We now set-up an algorithm to compute all the curves on smooth cubics, candidates to induce a Sarkisov link. We note that the conditions we will impose at this stage are necessary but not sufficient to guarantee that the curve induces a Sarkisov link.

\begin{remark}\label{assumptions on the type}
	Suppose that $C$ is of type $(k;m_1,m_2,m_3,m_4,m_5,m_6)$. Up to reordering, we may assume 
	\[
	m_1 \geq m_2 \geq \dots \geq m_6.
	\]
	Moreover, as explained in \cite[Set-up  4.1]{WeakFanos}, by performing a number of quadratic transformations
	\[
	\xymatrix{
			&S \ar[rd] \ar[ld]\\
	\PP^2 \ar@{-->}[rr] && \PP^2 		
	}
	\]
	based on 3 of the 6 points and changing the left hand side morphism to the right hand side one we may assume that 
	\[
	k \geq m_1 + m_2 + m_3.
	\]
	In what follows, we will assume that the type of $C$ satisfies these conditions.
\end{remark}

\begin{lemma}\label{secancy}
	Let $C \subset S$ be a curve of type $(k;m_1,m_2,m_3,m_4,m_5,m_6)$. Then we have the following inequalities for the intersection of $C$ with the various lines on $S$:
	\begin{align*}
	C \cdot c_1 \geq C \cdot L, && C \cdot l_{5,6} \geq C\cdot l_{i,j}, && C \cdot l_{k,6} \geq C\cdot l_{k,j}, && C \cdot e_1 \geq C \cdot e_i,
	\end{align*}
	where $L$ is any line on $S$.
\end{lemma}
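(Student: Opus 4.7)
The plan is a direct intersection computation on $S$. First I would record, for $C \sim kL - \sum m_i F_i$ and using the intersection form on $S$, the three families of intersection numbers
\[
C \cdot e_i = m_i, \qquad C \cdot l_{i,j} = k - m_i - m_j, \qquad C \cdot c_i = 2k - \sum_{j \neq i} m_j,
\]
since $e_i$, $l_{i,j}$, $c_i$ have classes $F_i$, $L - F_i - F_j$ and $2L - \sum_{j \neq i} F_j$ respectively. The last three inequalities of the statement then fall out immediately from the ordering $m_1 \geq m_2 \geq \cdots \geq m_6$ assumed in Remark \ref{assumptions on the type}: minimizing $m_i + m_j$ over pairs $(i,j)$ forces $(i,j) = (5,6)$; fixing one index in $l_{i,j}$ and minimizing the other forces the free index to be $6$; and maximizing $m_i$ forces $i = 1$.

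For the first inequality I must show that $C \cdot c_1$ dominates $C \cdot L$ for \emph{every} line $L$ on $S$. Among the conics this is clear since $C \cdot c_i = 2k - (m_1 + \cdots + m_6) + m_i$ is maximized when $m_i$ is largest. It then remains to compare $C \cdot c_1$ with the two family-maxima $C \cdot e_1 = m_1$ and $C \cdot l_{5,6} = k - m_5 - m_6$, and here I would invoke the second normalisation of Remark \ref{assumptions on the type}, namely $k \geq m_1 + m_2 + m_3$. The inequality $C \cdot c_1 \geq m_1$ reads $2k \geq m_1 + \cdots + m_6$, which holds because $2k \geq 2(m_1 + m_2 + m_3) \geq (m_1 + m_2 + m_3) + (m_4 + m_5 + m_6)$, the last step using that the $m_i$ are decreasing. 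Similarly, $C \cdot c_1 \geq k - m_5 - m_6$ rearranges to $k \geq m_2 + m_3 + m_4$, which follows from $k \geq m_1 + m_2 + m_3$ together with $m_1 \geq m_4$.

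This is essentially a bookkeeping argument, so I do not anticipate a real obstacle. What is worth flagging is that both normalisations of Remark \ref{assumptions on the type} are genuinely needed: the decreasing ordering alone delivers the three tail inequalities within each family, while the bound $k \geq m_1 + m_2 + m_3$ is precisely what forces $c_1$ — rather than $e_1$ or $l_{5,6}$ — to be a globally maximal line in the comparison.
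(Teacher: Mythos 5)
Your proof is correct and follows the same route as the paper: compute $C\cdot e_i = m_i$, $C\cdot l_{i,j} = k - m_i - m_j$, $C\cdot c_i = 2k - \sum_{j\neq i} m_j$ from the intersection form and compare using the normalisations of Remark \ref{assumptions on the type}. You are in fact more thorough than the paper, which only writes out the comparison $C\cdot c_1 \geq C\cdot c_i$ and dismisses the remaining inequalities as similar; your explicit verification that $C\cdot c_1$ also dominates $C\cdot e_1$ and $C\cdot l_{5,6}$ --- the two steps that genuinely require $k \geq m_1+m_2+m_3$ rather than just the decreasing order of the $m_i$ --- supplies exactly the content the paper leaves implicit.
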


\begin{proof}
	We have
	\[
	C \cdot c_1 = 2k - (m_2 + \dots + m_6) \geq 2k - (m_1 + \dots + \hat m_i + \dots + m_6) = C\cdot c_i,
	\]
	where the hat notation means that the corresponding term does not appear in the expression. In a similar manner we get the rest of the inequalities.	
\end{proof}

Using these inequalities as well as Lemma \ref{term} we will bound the quantities $k, m_1, \dots, m_6$.

\begin{proposition}\label{necessary conditions}
	Let $C \subset S$ be a curve of type $(k;m_1,m_2,m_3,m_4,m_5,m_6)$ and suppose that $C$ induces a Sarkisov link and that $X \defeq \Bl_{C}\PP^3$ is not weak Fano. Then
	\[
	k \leq 9 \qquad m_1 \leq 8  \quad \text{and} \quad m_2,\dots, m_6 \leq 2.
	\]
\end{proposition}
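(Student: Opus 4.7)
My plan is to combine the existence of a highly secant line (from Proposition \ref{lines are enough}) with Lemma \ref{term} applied to pairs of intersecting lines that meet the secant one. Since $X$ is not weak Fano, Proposition \ref{lines are enough} produces some line $L_0 \subset S$ with $C \cdot L_0 \geq 5$, and Lemma \ref{secancy} together with the ordering $m_1 \geq \dots \geq m_6$ lets me take $L_0$ to be one of the three ``extremal'' lines $e_1$, $l_{5,6}$ or $c_1$. This splits the argument into three (possibly overlapping) cases.

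The key combinatorial input is the intersection pattern of the $27$ lines on the smooth cubic, which is elementary to compute from the classes recalled at the start of the section: $e_i$ meets $l_{j,k}$ iff $i \in \{j,k\}$; $e_i$ meets $c_j$ iff $i \neq j$; $l_{i,j}$ meets $l_{k,\ell}$ iff $\{i,j\}\cap\{k,\ell\}=\emptyset$; and $l_{i,j}$ meets $c_k$ iff $k\in\{i,j\}$. For any such intersecting pair Lemma \ref{term} forces $C\cdot L_1+C\cdot L_2\leq 7$ under the hypothesis that $C$ induces a Sarkisov link.

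I would treat the three cases in parallel. When $C\cdot c_1\geq 5$, pairing $c_1$ with each $e_j$ ($j\neq 1$) directly gives $m_j = C\cdot e_j \leq 2$ for $j=2,\dots,6$; pairing $c_1$ with $l_{1,6}$ and combining with $k\geq m_1+m_2+m_3$ then bounds $k$ (and hence $m_1\leq k$). When $m_1=C\cdot e_1\geq 5$, pairing $e_1$ with $l_{1,2}$ yields $k-m_2\leq 7$, which together with $k\geq m_1+m_2+m_3$ and $m_1\geq 5$ forces $m_3\leq 2$, hence $m_j\leq 2$ for $j\geq 3$; pairing $e_1$ with $c_2$ then gives $2k-m_3-m_4-m_5-m_6\leq 7$, bounding $k$ and finally $m_2 \leq k - m_1 \leq 2$. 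When $C\cdot l_{5,6}\geq 5$, pairing with $e_5$ and $e_6$ immediately yields $m_5,m_6\leq 2$; pairing with $l_{3,4}$ and using $k\geq m_1+m_2+m_3$ together with $m_3 \geq m_4$ gives $2(m_1+m_2)\leq 11$, so $m_2\leq 2$, and the remaining bounds follow.

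The main obstacle is really the case analysis itself: in each case one needs to pick the intersecting pairs of lines whose constraint from Lemma \ref{term} combines cleanly with $k\geq m_1+m_2+m_3$ and the ordering $m_1\geq\dots\geq m_6$ to produce the required estimate. Once the correct pairs are chosen, the arithmetic is elementary and in fact yields bounds slightly tighter than those claimed in the statement, in particular $k \leq 7$ and $m_1 \leq 7$ in every case.
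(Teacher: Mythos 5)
Your proposal uses the same ingredients as the paper --- Proposition \ref{lines are enough} to produce a $\geq 5$-secant line, Lemma \ref{secancy} to normalise which line it is, and Lemma \ref{term} applied to well-chosen intersecting pairs --- so the route is essentially the paper's. Two remarks. First, your three-way case split is redundant: the first inequality of Lemma \ref{secancy} reads $C\cdot c_1 \geq C\cdot L$ for \emph{every} line $L$ on $S$, not merely for the other conics, so the existence of any $\geq 5$-secant line already forces $C\cdot c_1\geq 5$ and only your first case is needed; this is exactly how the paper proceeds. Your extra cases are handled correctly, but they cost you nothing and buy you nothing.

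Second, and more substantively, in the $c_1$ case the pairs you actually name do not reach $m_1\leq 8$: the pair $(c_1,l_{1,6})$ gives $3k\leq 19+m_1$, and combined with $m_1\leq k$ (which does follow from $k\geq m_1+m_2+m_3$) this yields only $k\leq 9$ and $m_1\leq 9$. The paper closes this gap with B\'ezout's theorem on $\PP^2$: since $X$ is not weak Fano one has $k\geq 2$, so irreducibility of $C$ forces the \emph{strict} inequality $m_1<k$, whence $2k<19$, $k\leq 9$ and $m_1\leq k-1\leq 8$. Alternatively you can close it with the pair $(e_1,c_2)$, which you already exploit in your second case: $C\cdot(e_1+c_2)=2k-m_3-m_4-m_5-m_6\leq 7$ together with $m_3,\dots,m_6\leq 2$ gives $2k\leq 15$, hence $k\leq 7$ and $m_1\leq 7$ --- this would also substantiate your closing claim of tighter bounds, which as written is not derived from the pairs listed in the $c_1$ case.
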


\begin{proof}
	Since we assume that $X$ is not weak Fano and by Proposition \ref{lines are enough}, $C$ admits an $m$-secant line with $m \geq 5$. Thus, by Lemma \ref{secancy} we have $C \cdot c_1 \geq 5$.
	
	For $C$ to induce a Sarkisov link, by Lemma \ref{term} we need to make sure that the sum of any two intersecting lines has intersection less than $8$ with $C$. Applying this to the lines intersecting $c_1$ we get 
	\[
	C\cdot(c_1 + l_{1,j}) \leq 7 \quad \text{ and } \quad C\cdot(c_1 + e_n) \leq 7
	\]
	for $n \neq 1$. The second inequality above gives $C\cdot e_n \leq 2$, hence
	\[
	m_2, \dots, m_6 \leq 2.
	\]
	Now the inequality $C\cdot(c_1 + l_{1,j})$ for $j=6$ together with the bounds on the $m_2, \dots, m_6$ give
	\[\label{star}\tag{$\star$}
	3k - m_1 - \dots - m_5 - 2m_6 \leq 7 \ra 3k \leq 19+ m_1.
	\]
	For $k = 1$, the curve $C$ cannot have any $m$-secants with $m \geq 5$. Since we assume that $X$ is not weak Fano, we get $k\neq 1$ and so by B\'ezout's theorem on $\PP^2$ we get $k > m_1$. Combining with (\ref{star}) we get the last bounds $k \leq 9$ and $m_1 \leq 8$.
\end{proof}

Using the bounds and checks introduced above we can set up an algorithm to compute all the candidate curves. They are presented in the table bellow.

\begin{table}[H]	
	\resizebox{1 \textwidth}{!}{	
		\rowcolors{3}{gray!25}{white}
		$
		\begin{array}{| c | c | c | c | c | c |}
		\hline
		\rowcolor{gray!50} & & & & &\\[-10pt]
		\rowcolor{gray!50}
		\# &  \text{ Type }  &(C\cdot c_1, \dots ,C\cdot c_6) &  (C\cdot l_{1,2},\dots, C\cdot l_{5,6}) & \deg(C) & \g(C) \\[2pt]
		\hline
		1 & (3;1, 1, 0, 0, 0, 0) & (5, 5, 4, 4, 4, 4)  &  (1, 2, 2, 2, 2, 2, 2, 2, 2, 3, 3, 3, 3, 3, 3) & 7& 1\\ 
		2 & (3;2, 0, 0, 0, 0, 0) &  (6, 4, 4, 4, 4, 4)  &  (1, 1, 1, 1, 1, 3, 3, 3, 3, 3, 3, 3, 3, 3, 3) & 7 & 0\\ 
		3 & (4;2, 1, 1, 1, 0, 0) &  (5, 4, 4, 4, 3, 3)  &  (1, 1, 1, 2, 2, 2, 2, 3, 3, 2, 3, 3, 3, 3, 4) & 7 & 2\\ 
		4 & (5;2, 1, 1, 1, 1, 1) &  (5, 4, 4, 4, 4, 4)  &  (2, 2, 2, 2, 2, 3, 3, 3, 3, 3, 3, 3, 3, 3, 3) & 8 & 5\\
		\hline
		5 & (3;2, 1, 0, 0, 0, 0) &  (5, 4, 3, 3, 3, 3)  &  (0, 1, 1, 1, 1, 2, 2, 2, 2, 3, 3, 3, 3, 3, 3) & 6 & 0\\ 
		6 & (5;3, 1, 1, 1, 1, 1) &  (5, 3, 3, 3, 3, 3)  &  (1, 1, 1, 1, 1, 3, 3, 3, 3, 3, 3, 3, 3, 3, 3) & 7 & 3\\ 
		\hline
		\end{array}
		$
	}
	\caption{List of candidate curves}		
	\label{List of candidate curves}
\end{table}

\begin{remark}
	The curves above are exactly the ones of Theorem \ref{theorem intro}.
\end{remark}

\section{Existence of the links}\label{Existence of links}

In the previous section we produced a table of types of curves lying on some smooth cubic, which satisfy the necessary criterion set by Proposition \ref{infinte K-positive curves} to induce a Sarkisov link. In this section we will prove that they actually do induce Sarkisov links. 

%
%
%
%
Again, unless otherwise stated, we stick to the notation introduced in Notation \ref{notation}.

\subsection{Some properties of the surfaces \texorpdfstring{$\FF_n$}{FFn}}

For a proof of the facts that follow see \cite[Section IV]{Beauville}.

We will denote by $\FF_n$ the $\PP^1$-bundle $p\colon \PP(\OO_{\PP^1} \oplus \OO_{\PP^1}(n)) \map \PP^1$. Then $\FF_n$ admits a unique section $\sigma$ with self intersection $-n$. We will call any section of self-intersection $n$ an \textbf{$n$-section}. If we denote by $f$ a fibre of $p$ then the Mori cone $\overline{\NE}(\FF_n)$ is spanned by the classes of $\sigma$ and $f$. The intersection matrix is 
\[
\begin{pmatrix*}[c]
-n & 1 \\
\phantom{-}1 & 0
\end{pmatrix*},
\]
with respect to the basis $\{\sigma,f\}$. The canonical class is $K_{\FF_n} = -2\sigma - (n+2)f$. Finally, $\FF_n$ is smooth and rational, so $h^i(\FF_n, \OO_{\FF_n}) = h^i(\PP^2,\OO_{\PP^2}) = 0$, for $i \geq 1$. Thus the Riemann-Roch theorem takes the form
\[
\chi(\OO_{\FF_n}(D)) = \frac{D(D-K_{\FF_n})}{2} + 1,
\]
for any $D \in \Div(\FF_n)$.
In particular, if $D \sim a \sigma + b f$, we have
\[
\chi(D) = (a+1)(b+1) - n\frac{a(a+1)}{2}.
\]

We now prove a lemma on the cohomology of divisors on $\FF_n$.

\begin{lemma}\label{cohomology Fn}
	Let $D$  be a divisor on $\FF_n$. Then the following hold:
	\begin{enumerate}
		\item if $D\cdot f = -1$, then $h^i(\FF_n,D) = 0$ for every $i \geq 0$;
		\item if $D\cdot f \geq 0$ and $D \cdot \sigma \geq -1$, then $h^1(\FF_n,D) = 0$.
	\end{enumerate}
\end{lemma}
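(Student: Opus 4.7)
My plan is to write $D \sim a\sigma + bf$, so that $D \cdot f = a$ and $D \cdot \sigma = -na + b$, and then exploit the structure map $p\colon \FF_n \to \PP^1$ together with the short exact sequence cut out by $\sigma$.

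For part (1), the hypothesis $D \cdot f = -1$ means that the restriction of $\OO_{\FF_n}(D)$ to every fibre of $p$ is $\OO_{\PP^1}(-1)$, whose cohomology vanishes in all degrees. By cohomology and base change (or Grauert's theorem, since all fibres are smooth $\PP^1$'s with cohomology of constant dimension $0$) we conclude that $R^i p_* \OO_{\FF_n}(D) = 0$ for every $i \geq 0$. The Leray spectral sequence then collapses to give $H^i(\FF_n, D) = 0$ for every $i \geq 0$.

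For part (2) I would induct on $a = D \cdot f \geq 0$. In the base case $a = 0$ we have $D \sim bf$ with $b = D \cdot \sigma \geq -1$; since $\OO_{\FF_n}(D) \isom p^*\OO_{\PP^1}(b)$, the projection formula combined with $p_*\OO_{\FF_n} = \OO_{\PP^1}$ and $R^1 p_*\OO_{\FF_n} = 0$ yields $H^1(\FF_n, D) \isom H^1(\PP^1, \OO(b)) = 0$. For the inductive step I would use the short exact sequence
\[
0 \to \OO_{\FF_n}(D - \sigma) \to \OO_{\FF_n}(D) \to \OO_\sigma(D|_\sigma) \to 0.
\]
The divisor $D - \sigma$ satisfies $(D - \sigma) \cdot f = a - 1 \geq 0$ and $(D - \sigma) \cdot \sigma = D \cdot \sigma + n \geq n - 1 \geq -1$, so by induction $H^1(\FF_n, D - \sigma) = 0$. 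Since $\sigma \isom \PP^1$ and $D|_\sigma$ has degree $D \cdot \sigma \geq -1$, we also get $H^1(\sigma, D|_\sigma) = 0$. The long exact sequence then gives $H^1(\FF_n, D) = 0$.

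The only point requiring any attention is checking that the inductive hypothesis on $(D-\sigma)\cdot \sigma$ is preserved, which amounts to $n \geq 0$ — a feature of every Hirzebruch surface. The rest is standard projective-bundle cohomology, so no serious obstacle is expected.
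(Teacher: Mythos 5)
Your proof is correct, and for part (2) it is structurally the same as the paper's: both arguments peel off copies of $\sigma$ one at a time via the short exact sequence
\[
0 \map \OO_{\FF_n}(D-\sigma) \map \OO_{\FF_n}(D) \map \OO_{\sigma}(D|_\sigma) \map 0,
\]
using that the relevant restrictions to $\sigma \isom \PP^1$ have degree at least $-1$; the paper merely organizes the induction in the opposite direction (starting from $N = D-(k+1)\sigma$, which satisfies $N\cdot f=-1$, and adding $\sigma$'s back, so that part (1) serves as the base case), whereas you induct downward on $a = D\cdot f$ and handle $a=0$ separately by the projection formula. Where you genuinely diverge is part (1): the paper argues that $h^0$ vanishes because $-\sigma+kf$ is not effective, that $h^2$ vanishes by Serre duality, and that $\chi(D)=0$ by Riemann--Roch, while you instead observe that $D$ restricts to $\OO_{\PP^1}(-1)$ on every fibre, invoke Grauert/cohomology-and-base-change to get $R^ip_*\OO_{\FF_n}(D)=0$, and conclude by Leray. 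Your route for (1) is more conceptual and generalizes immediately to higher-rank projective bundles, at the cost of quoting heavier machinery; the paper's is self-contained surface theory (effective cone, duality, Riemann--Roch) in the spirit of the reference it cites. Both are complete, and your check that the inductive hypothesis $(D-\sigma)\cdot\sigma \geq -1$ is preserved (using $n\geq 0$) is exactly the point the paper's version also relies on when it verifies $\kappa = D\cdot\sigma + n(k+1-i)\geq -1$.
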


\begin{proof}
	The proof can be found in \cite{Coskun-Huizenga}, we merely replicate it for the convenience of the reader.
	
	Assume first that $D\cdot f = -1$. Then $D$ is linearly equivalent to $-\sigma + kf$ for some $k\in \ZZ$. Since $\overline{\NE}(\FF_n)$ is spanned by the classes of $\sigma$ and $f$ we get that $h^0(\FF_n,-\sigma + kf) = 0$ and by Serre duality, $h^2(\FF_n,-\sigma + kf) = h^0(\FF_n, -\sigma -(n+2+k)f) =0$. Finally using Riemann-Roch one can compute $\chi(\FF_n, D) = 0$ and so $h^1(\FF_n,-\sigma + kf) = 0$. Hence, we obtain $(1)$.
	
	Now assume that $D\cdot f = k \geq 0$ and $D \cdot \sigma \geq -1$. Define $N := D-(k+1)\sigma$. We will show by induction on $i$ that $h^1(\FF_n,N+i\sigma) = h^1(\FF_n,D-(k+1-i)\sigma) = 0$ for every $0 \leq i \leq k+1$. For $i = 0$ we have $N\cdot f = -1$ and thus we are done by $(1)$. 
	
	For the inductive step we assume that $h^1(\FF_n,N+(i-1)\sigma) = 0$ and consider the short exact sequence
	\[
	0 \map \OO_{\FF_n}\left(N + (i-1)\sigma\right) \map \OO_{\FF_n}(N+i\sigma) \map  \OO_{\sigma}(N+i\sigma) \map 0.
	\]
	Then the long exact sequence in cohomology yields
	\[
	H^1(\FF_n,\OO(N + (i-1)\sigma)) \map H^1(\FF_n,\OO(N + i\sigma)) \map H^1(\sigma,\OO(N + i\sigma)) \isom H^1(\PP^1,\OO_{\PP^1}(\kappa)) = 0
	\]
	where the last equality follows from the fact that $\kappa = D\cdot \sigma + n(k+1-i) \geq -1$. Thus using the inductive hypothesis we get $h^1(\FF_n,N - i\sigma)=0$. 
	
	For $i = k+1$ we get $h^1(\FF_n,N + (k+1)\sigma) = h^1(\FF_n,D) = 0$
	This is $(2)$.
\end{proof}

In the following, $\overline{\FF}_n$ will denote the surface obtained by contracting the $(-n)$-section of $\FF_n$.
To conclude the subsection, we prove a lemma about the intersection theory on $\overline{\FF}_n$.

\begin{lemma}\label{FF_n bar}\sloppy{
		Let $\phi\colon\FF_n \map \overline{\FF}_n$ be the contraction of the $(-n)$-section on $\FF_n$. Then $\WDiv(\overline{\FF}_n) = \langle \bar{f} \rangle$ and $\CDiv(\overline{\FF}_n) = \langle \bar{\sigma}_+ \rangle$ with $\bar{\sigma}_+ = n\bar{f}$, where $\bar{\sigma}$ and $\bar{f}$ denote the images of an $n$-section and a fibre respectively. We also have $\bar{\sigma}_+^2 = n$ which implies that $\bar{f}^2 = \frac{1}{n}$. }
\end{lemma}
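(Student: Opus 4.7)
The plan is to analyse the contraction $\phi\colon \FF_n \to \overline{\FF}_n$, which is an isomorphism away from $\sigma$ and crushes $\sigma$ to a single (possibly singular) point $\bar p$, via the induced pushforward and pullback on divisors. For the Weil group, $\phi_*$ sends $\sigma \mapsto 0$ and $f \mapsto \bar f$; since $\WDiv(\FF_n) = \ZZ\sigma \oplus \ZZ f$ and every prime Weil divisor on $\overline{\FF}_n$ is the image of its proper transform, this yields $\WDiv(\overline{\FF}_n) = \langle \bar f \rangle$. The linear equivalence $\bar\sigma \sim n\bar f$ then follows by pushing forward the equivalence $\sigma' \sim \sigma + nf$ on $\FF_n$, where $\sigma'$ is any $n$-section (the unique section class of self-intersection $n$).

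For $\CDiv(\overline{\FF}_n)$, I would characterise Cartier divisors through pullback: a Weil divisor $\bar D$ on $\overline{\FF}_n$ is Cartier if and only if its total transform on $\FF_n$ is an integral divisor trivial on the contracted curve $\sigma$. Writing a general class on $\FF_n$ as $a\sigma + bf$, the condition $(a\sigma + bf)\cdot \sigma = 0$ forces $b = na$, so such classes are precisely the integer multiples of $\sigma + nf = \sigma'$, corresponding to integer multiples of $\bar\sigma$. Conversely $\bar\sigma$ is honestly Cartier since $\sigma'$ is disjoint from $\sigma$, so its image on $\overline{\FF}_n$ avoids $\bar p$ and any local equation near $\bar p$ trivialises. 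This gives $\CDiv(\overline{\FF}_n) = \langle \bar\sigma \rangle$.

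Finally, for the intersection numbers, the projection formula applied to the Cartier divisor $\bar\sigma$ yields $\bar\sigma^2 = (\phi^*\bar\sigma)^2 = \sigma'^2 = (\sigma + nf)^2 = n$, and the relation $\bar\sigma \sim n\bar f$ established above then gives $\bar f^2 = \bar\sigma^2/n^2 = 1/n$.

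The main delicate point will be the Cartier versus $\QQ$-Cartier distinction: for $n \geq 2$ the class $\bar f$ is only $\QQ$-Cartier, with $\phi^*\bar f = f + \tfrac{1}{n}\sigma$, so $\bar f^2$ must be interpreted via the $\QQ$-valued intersection pairing on this $\QQ$-factorial surface. Once it is established that $\bar\sigma$ is genuinely Cartier, the remaining steps reduce to routine intersection theory on the smooth model $\FF_n$.
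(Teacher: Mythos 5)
Your proof is correct and establishes all four assertions of the lemma, but it travels a somewhat different road from the paper's. The paper realises $\phi$ concretely as the morphism defined by the linear system $|\sigma + nf|$, so that $\overline{\FF}_n$ is embedded as the cone over a rational normal curve of degree $n$, and then reads everything off from hyperplane sections: a general hyperplane section is the image of an $n$-section (giving the generator of $\CDiv$), a hyperplane section through the vertex is $n$ times the image of a fibre (giving $\bar\sigma = n\bar f$ and the generator of $\WDiv$), and $\bar\sigma^2 = (\sigma+nf)^2 = n$ because the $n$-section misses the exceptional curve. You instead argue intrinsically: $\Cl(\overline{\FF}_n)$ as the quotient of $\Cl(\FF_n)$ by the contracted class, Cartier divisors characterised by integrality and $\sigma$-triviality of the pullback, and the projection formula for $\bar\sigma^2$. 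The essential inputs are the same in both proofs (the $n$-section has class $\sigma + nf$, is disjoint from $\sigma$, and has self-intersection $n$), so neither buys extra generality, but your version avoids invoking the projective model, while the paper's makes the geometry of $\overline{\FF}_n$ as a cone visible. One point you rightly flag but should keep honest: the ``if'' direction of your Cartier criterion (a $\sigma$-trivial integral class descends to a Cartier class) is not needed in full generality, since you verify Cartier-ness of $\bar\sigma$ directly from the disjointness of the $n$-section from $\sigma$; as long as the criterion is only used in the ``only if'' direction to exclude non-multiples of $\bar\sigma$, the argument is complete.
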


\begin{proof}
	The morphism $\phi$ is given by the linear system $|\sigma + nf|$ (i.e.\ the linear system of $n$-sections). The rank of $\CDiv(\overline{\FF}_n)$ is 1 and a general hyperplane section is the image of an $n$-section, thus $\CDiv(\overline{\FF}_n) = \langle \bar{\sigma} \rangle$. Moreover $(\sigma + nf)f = 1$ thus the image of a fibre is a line in $\PP^N$ and so $\WDiv(\overline{\FF}_n) = \langle \bar{f} \rangle$. 
	
	The hyperplanes sections passing through the singular point $\phi(\sigma)$ are images of members of $|\sigma + nf|$ which have $\sigma$ as an irreducible component. Since $\sigma$ is contracted, hyperplane sections through $\phi(\sigma)$ are equivalent to $n\bar{f}$ which implies that $\bar{\sigma}_+ = n\bar{f}$.
	
	Finally, since an $n$-section does not meet the exceptional divisor of $\phi$ we have $\bar{\sigma}_+^2 = (\sigma + nf)^2 = n$ from which we also conclude that $\bar{f}^2 = \left(\frac{1}{n}\bar{\sigma}_+\right)\left(\frac{1}{n}\bar{\sigma}_+\right) = \frac{1}{n}$.
\end{proof}

\subsection{\texorpdfstring{$X$}{X} is a Mori Dream Space}

\begin{proposition}\label{X is an MDS}
	Let $C\subset \PP^3$ be a smooth curve lying on a smooth cubic surface $S \subset \PP^3$. Then $X \defeq \Bl_C\PP^3$ is an MDS.
\end{proposition}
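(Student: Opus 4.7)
The plan is to invoke Proposition \ref{Fano type is MDS} and prove that $X$ is of Fano type by exhibiting an effective $\QQ$-divisor $\Delta$ on $X$ with $(X,\Delta)$ klt and $-(K_X+\Delta)$ ample. The natural candidate is supplied by the hypothesis: since $C \subset S$ with $S$ smooth, the strict transform $T \subset X$ of $S$ under $\pi$ is isomorphic to $S$, hence smooth and irreducible, and has numerical class $[T] = 3H-E$. The idea is to use $T$ itself to build the boundary $\Delta$.

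Concretely, I would take $\Delta \defeq (1-\delta)T$ for some $\delta \in (0,1)$ to be chosen later. Because $X$ is smooth and $T$ is a smooth prime divisor with coefficient strictly less than $1$, Remark \ref{klt for any coefficient} gives immediately that $(X,\Delta)$ is klt. Using the intersection relations from Notation \ref{notation} one computes
\[
-(K_X+\Delta) = (4H-E) - (1-\delta)(3H-E) = (1+3\delta)H - \delta E.
\]

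It remains to verify ampleness of $(1+3\delta)H - \delta E$ for $\delta$ sufficiently small. By Kleiman's criterion it suffices to check strict positivity on the two extremal rays of $\overline{\NE}(X)$. By Lemma \ref{curves}, these are spanned by $f$ and by some class $dl-mf$, the ratio $m/d$ being a finite real number (the slope of the second extremal ray of a closed $2$-dimensional cone). The two intersections are $((1+3\delta)H-\delta E)\cdot f = \delta > 0$ and $((1+3\delta)H-\delta E)\cdot (dl-mf) = d + \delta(3d-m)$, and the latter is strictly positive as soon as $\delta < d/(m-3d)$ (a vacuous condition if $m\leq 3d$). The only step that needs care is thus this second inequality, and since the slope $m/d$ is finite and fixed, a sufficiently small $\delta>0$ satisfies both the klt bound $\delta<1$ and the positivity bound. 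For such $\delta$, the pair $(X,\Delta)$ is klt and $-(K_X+\Delta)$ is ample, so $X$ is of Fano type, and Proposition \ref{Fano type is MDS} concludes that $X$ is an MDS.
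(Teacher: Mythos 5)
Your proof is correct and follows essentially the same route as the paper: take the strict transform $T\sim 3H-E$ of the cubic with coefficient close to $1$, get klt from Remark \ref{klt for any coefficient}, and verify ampleness of $-(K_X+\Delta)$ on the two extremal rays of Lemma \ref{curves} via Kleiman's criterion. The only (harmless) difference is that the paper first splits off the weak Fano case and uses Proposition \ref{lines are enough} to normalize the second ray to $l-mf$ with $m\geq 5$, whereas you treat all cases uniformly using only the finiteness of the slope $m/d$.
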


\begin{proof}	
	If $X$ is weak Fano, then by Lemma \ref{weak Fano implies Fano type} and Proposition \ref{Fano type is MDS}, it is an MDS.
	
	Suppose $X$ is not weak Fano. By Lemma \ref{curves}, $\overline{\NE}(X)$ is generated by $f$ and $dl - mf$ with $\frac{m}{d} > 4$ and then by Proposition \ref{lines are enough}, $d=1$ and $m\geq 5$. Moreover, $K_X \sim -4H + E$ and $S \sim 3H - E$, where by abuse of notation $S$ also denotes the strict transform of $S$ in $X$. The intersections among those classes are given by the table
	\[
	\begin{array}{c|c|c}
	& f 	& l - mf\\\hline
	S	 & 1  &3 - m\\\hline
	K_X& -1 & -4+m 
	\end{array}
	\]
	Then by Remark \ref{klt for any coefficient}, for any rational number $0<q<1$ the pair $(X,qS)$ is klt. If moreover $1 -  \frac{1}{m-3} < q < 1$, then intersecting $-(K_X + q S)$ with both $f$ and $l-mf$ we get strictly positive numbers. Kleiman's criterion for ampleness (see \cite[Theorem 1-2-5]{Matsuki} for a statement and \cite{Kleiman} for a proof) implies that $-(K_X + q S)$ is ample and so $(X,qS)$ is log Fano. Thus by definition, $X$ is of Fano type and by Proposition \ref{Fano type is MDS}, $X$ is an MDS.
\end{proof}

\subsection{Construction of the pseudo-isomorphisms}

\begin{proposition}\label{normal bundles of lines}
	Let $l$ be the strict transform of an $m$-secant line under $X \map \PP^3$. If $m \geq 3$, then 
	\[
	N_{l/X} \isom \OO_{\PP^1}(-1) \oplus \OO_{\PP^1}(3-m),
	\]
	where $N_{l/X}$ denotes the normal bundle of $l$ in $X$.
\end{proposition}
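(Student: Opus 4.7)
My plan is to compute $N_{l/X}$ via the standard normal bundle sequence for $l \subset \tilde S \subset X$, where $\tilde S$ is the strict transform of the smooth cubic surface $S$ containing $C$. By Proposition~\ref{lines are enough} (B\'ezout), any $m$-secant line with $m \geq 4$ must lie on $S$, while for $m = 3$ the lines relevant for the $2$-ray game are among the $27$ lines of $S$; in either case we may assume $\pi(l)\subset S$, hence $l \subset \tilde S$. Since $\pi|_l$ is an isomorphism, $l \isom \PP^1$, so adjunction on the smooth threefold $X$ gives
\[
\deg N_{l/X} \;=\; -2 - K_X\cdot l \;=\; (4H - E)\cdot l - 2 \;=\; 2-m,
\]
using $K_X \sim -4H + E$ together with $l\cdot H = 1$ and $l\cdot E = m$, so $N_{l/X}\isom \OO(a)\oplus\OO(b)$ with $a+b = 2-m$.

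Since $S$ is smooth and contains $C$ with multiplicity one, $\pi^*S = \tilde S + E$, so $\tilde S \sim 3H - E$ in $\N^1(X)$, and the restriction $\pi|_{\tilde S}\colon \tilde S \to S$ is an isomorphism (blowing up a divisor in a smooth surface is trivial). In particular $\tilde S$ is smooth along $l$, and the normal bundle restricts to $N_{\tilde S/X}\big|_l \isom \OO_l\bigl((3H-E)\cdot l\bigr) = \OO(3-m)$. On the other hand, under $\tilde S\isom S$ the curve $l$ is a line on a smooth cubic, hence a $(-1)$-curve, so $N_{l/\tilde S}\isom \OO(-1)$. The usual exact sequence of normal bundles thus takes the form
\[
0 \to \OO(-1) \to N_{l/X} \to \OO(3-m) \to 0.
\]

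To finish I would observe that this extension automatically splits: its class lies in $\Ext^1_{\PP^1}\bigl(\OO(3-m),\OO(-1)\bigr) \isom H^1\bigl(\PP^1,\OO(m-4)\bigr)$, which vanishes precisely because $m \geq 3$ forces $m-4 \geq -1$. Hence $N_{l/X}\isom \OO(-1)\oplus\OO(3-m)$, as claimed. The main obstacle avoided by this choice of auxiliary surface is the following: using instead the strict transform of a general plane through $\pi(l)$ would yield a sequence $0\to \OO(1-m)\to N_{l/X}\to \OO(1)\to 0$, and here $\Ext^1(\OO(1),\OO(1-m)) = H^1(\OO(-m)) \neq 0$ for $m\geq 2$, so the formula could not be read off. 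The cubic surface works precisely because every line on it is a $(-1)$-curve, producing the summand $\OO(-1)$ whose degree is large enough (relative to $\OO(3-m)$) to kill the relevant $\Ext^1$.
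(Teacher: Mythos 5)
Your proposal is correct and follows essentially the same route as the paper: the normal bundle sequence for $l \subset \tilde S \subset X$ with $N_{l/\tilde S}\isom \OO(-1)$ and $N_{\tilde S/X}|_l\isom\OO(3-m)$, followed by the vanishing of $\Ext^1\bigl(\OO(3-m),\OO(-1)\bigr)\isom H^1(\PP^1,\OO(m-4))$ for $m\geq 3$. Your extra remarks (the adjunction consistency check, the justification that $l$ lies on $S$, and why a plane section would not split) go slightly beyond what the paper records but do not change the argument.
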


\begin{proof}
	Consider the short exact sequence
	\[
	0 \map N_{l/S} \map N_{l/X} \map (N_{S/X})|_l \map 0
	\]
	obtained by dualizing the conormal exact sequence (see \cite[p.\ 79]{Fulton-Lang}). 
	We have 
	\[
	\deg (N_{S/X})|_l = S|_S\cdot l = (3H - E)|_S \cdot l = (-3K_S - C)\cdot l = 3-m.
	\]
	Moreover $\deg(N_{l/S})= -1$ because $l$ is a $(-1)$-curve in $S$.
	Since $l$ is a rational curve, all line bundles on it are of the form $\OO(d)$, where $d$ is the degree of the bundle and so the exact sequence above becomes
	\[\label{dagger}\tag{$\dagger$}
	0 \map \OO_{\PP^1}(-1) \map N_{l/X} \map \OO_{\PP^1}(3-m) \map 0.
	\]	
	Extensions of lines bundles
	\[
	0 \map L \map M \map N \map 0
	\]
	are classified by $H^1(X,N^{-1}\otimes L)$ (see \cite[p.\ 31]{Friedman}) and in the case of (\ref{dagger}) by $H^1(\PP^1, \OO_{\PP^1}(m-4))$.
	However,
	\[
	h^1(\PP^1, \OO_{\PP^1}(m-4)) = h^0(\PP^1, \OO_{\PP^1}(2-m)) = 0
	\]
	since $m \geq 3$. Thus (\ref{dagger}) is the unique extension and is thus trivial. We conclude that $N_{l/X} \isom \OO_{\PP^1}(-1) \oplus \OO_{\PP^1}(3-m)$.
\end{proof}

\begin{lemma}\label{intersection numbers}
	Let $C$ be a smooth rational curve lying in the smooth locus of a $3$-fold $X$ with normal bundle ${N_{C/X} \isom \OO_{C}(\alpha) \oplus \OO_C(\beta)}$ for some $\alpha \geq \beta \in \ZZ$.   Let $E\subset X' \maps{p} C \subset X$ be the blowup of $C$ with exceptional divisor $E = \PP(N_{C/X}) \isom \FF_{\alpha - \beta}$ and let  $C'$ be the unique negative section of $E \map C$ or a $0$-section if $E\isom \FF_0$.
	\begin{enumerate}
		\item We have $E\cdot C' = \alpha$;  in particular $E|_E \sim_E -C' + \beta f$.	
		\item Suppose that $S \subset X$ is a surface containing $C$ which is smooth along $C$ and that $(C^2)_S = \kappa$. If $S'$ is the strict transform of $S$, then  $D \defeq S'|_E \sim_E C' + (\alpha - \kappa)f$.
		\item If ${N_{C'/X'} = \OO_{C}(\alpha') \oplus \OO_C(\beta')}$ with $\alpha' \geq \beta'$, then $\alpha' + \beta' = \beta$. If furthermore $2\alpha - \beta < 2$ then  $\alpha' = \beta - \alpha$ and $\beta' = \alpha$.
	\end{enumerate}	
\end{lemma}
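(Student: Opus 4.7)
The plan is to identify $E \cong \FF_n$ with $n = \alpha - \beta$ and compute $E|_E$ as an explicit divisor class in the basis $\{\sigma, f\}$ of $N^1(\FF_n)$, with $\sigma = C'$ the negative section (or any $0$-section if $n = 0$); the three parts then follow from intersection-number bookkeeping together with two short exact sequences of normal bundles.

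For (1) I write $E|_E = a\sigma + bf$. The coefficient $a$ is pinned down by the standard fact $\OO_{X'}(-E)|_f \cong \OO_{\PP^1}(1)$ for any fibre $f$ of $E \to C$, which gives $E|_E \cdot f = -1$ and hence $a = -1$. To find $b$ I would use adjunction on $E$: starting from $K_{X'} = p^*K_X + E$ one gets $K_E = p^*K_X|_E + 2E|_E$, and applying adjunction on $C \cong \PP^1$ yields $\deg(K_X|_C) = -2 - (\alpha + \beta)$, so $p^*K_X|_E \sim (-2 - \alpha - \beta)f$. Matching with the known $K_{\FF_n} = -2\sigma - (n+2)f$ then forces $b = \beta$. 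The stated identity $E \cdot C' = (-\sigma + \beta f)\cdot \sigma = n + \beta = \alpha$ is immediate.

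For (2), the assumption that $S$ is smooth along $C$ means $p^*S = S' + E$, so $S'|_E = p^*S|_E - E|_E$. The class $p^*S|_E$ is the pullback under $\pi = p|_E$ of the divisor $\OO_X(S)|_C \cong N_{S/X}|_C$ on $C$, whose degree is read off from the exact sequence
\[
0 \to N_{C/S} \to N_{C/X} \to N_{S/X}|_C \to 0
\]
as $(\alpha + \beta) - \kappa$. Substituting the formula for $E|_E$ from (1) yields $S'|_E = \sigma + (\alpha - \kappa)f$, as claimed.

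For (3) I apply the normal bundle exact sequence
\[
0 \to N_{C'/E} \to N_{C'/X'} \to N_{E/X'}|_{C'} \to 0,
\]
whose outer terms are $\OO_{\PP^1}(\beta - \alpha)$ (since $C'$ is the $(-n)$-section of $\FF_n$, so $(C')^2_E = -n$) and $\OO_{\PP^1}(\alpha)$ (by (1)). Summing degrees gives $\alpha' + \beta' = \beta$. For the splitting, extensions of $\OO(\alpha)$ by $\OO(\beta - \alpha)$ on $\PP^1$ are classified by $H^1(\PP^1, \OO(\beta - 2\alpha))$, which vanishes precisely when $\beta - 2\alpha \geq -1$, i.e.\ $2\alpha - \beta < 2$ for integer parameters; under this hypothesis the sequence splits and $N_{C'/X'} \cong \OO_{\PP^1}(\beta - \alpha) \oplus \OO_{\PP^1}(\alpha)$, giving $\{\alpha',\beta'\} = \{\beta-\alpha,\alpha\}$ as a multiset and matching the claim. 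The only really delicate point in the whole argument is the computation of $b$ in part (1); using the adjunction route rather than the identity $E^3 = -\deg N_{C/X}$ avoids any circularity, after which the rest is direct.
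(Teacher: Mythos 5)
Your argument is correct and follows essentially the same route as the paper: adjunction via $K_{X'} = p^*K_X + E$ to determine $E|_E$ and $E\cdot C'$ in part (1), intersection bookkeeping for $S'|_E$ in part (2), and the normal-bundle exact sequence for $C' \subset E \subset X'$ together with the vanishing of $H^1(\PP^1,\OO(\beta-2\alpha))$ when $2\alpha - \beta < 2$ in part (3). The only cosmetic difference is in (2), where the paper pins down the coefficient of $f$ by computing $(D)^2_{S'}=\kappa$ and intersecting with $E|_E$, whereas you use $p^*S = S' + E$ together with the sequence $0 \to N_{C/S} \to N_{C/X} \to N_{S/X}|_C \to 0$; the two computations are equivalent.
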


\begin{proof}
	We first use the adjunction formula on $C' \subset E$ and $C \subset X$ to obtain 
	\[
	K_E\cdot C' = 2g - 2 - (C')_E^2 \, \text{ and } \, K_X\cdot  C  = 2g - 2 - \deg N_{C/X} = 2g - 2 - (\alpha + \beta)
	\] 
	respectively. We have
	\begin{gather*}
	K_E = (K_{X'} + E)|_E = (p^*K_X + 2E)|_E \implies K_E \cdot C' = p^*K_X\cdot C' + 2E\cdot C' = K_X \cdot C + 2E\cdot C'\\
	\implies E\cdot C' = \frac{K_E \cdot C' - K_X \cdot C}{2} = \frac{2g-2 + \alpha - \beta - (2g-2) + \alpha + \beta}{2} = \alpha.
	\end{gather*}
	We can then write $E|_E \sim_E kC' + lf$ and use the facts that $E\cdot f = -1$ and $E \cdot C' = \alpha$ to deduce that $k = -1$ and $l = \beta$. This is (1).
	
	Since $S$ is smooth along $C$, $D\subset S' \maps{p} C \subset S$ is an isomorphism. We write $D \sim_E k C' + l f$. 	
	By $S$ being smooth (hence of multiplicity 1) along $C$ we get $k = 1$. We also have
	\begin{gather*}
	\kappa = (D)^2_{S'} = D \cdot E = (C' + lf)(-C' + \beta f) =  \alpha - \beta + \beta - l \implies  l = \alpha - \kappa.
	\end{gather*}
	This is (2).
	
	Finally, we have
	\begin{gather*}
	\deg N_{C'/X'} = 2g - 2 - K_{X'} \cdot C' = 2g - 2 - (p^*K_X + E)\cdot C' =\\
	(2g - 2 - K_X\cdot C) - E\cdot C' = \deg N_{C/X}  - \alpha \\
	\implies \alpha' + \beta' = \alpha + \beta  - \alpha = \beta.
	\end{gather*}
	
	Moreover, since $C' \subset E$ and $E \subset X'$ are regular imbeddings, the normal bundle sequence (see \cite[Proposition 3.4]{Fulton-Lang}) 	yields
	\[
	0 \map N_{C'/E} \map N_{C'/X'} \map N_{E/X'}|_{C'} \map 0
	\]
	which is actually
	\[
	0 \map \OO_{C'}(\beta - \alpha) \map \OO_{C'}(\alpha') \oplus \OO_{C'}(\beta') \map \OO_{C'}(\alpha) \map 0.
	\]
	Such extensions are classified by
	\[
	\Ext(\OO_{C'}(\beta - \alpha),\OO_{C'}(\alpha)) \isom H^1(\PP^1,\OO(\beta - 2\alpha)) \isom H^0(\PP^1, \OO(-2 + 2\alpha - \beta)).
	\]
	If $2\alpha - \beta < 2$, then $\Ext(\OO_{C'}(\beta - \alpha),\OO_{C'}(\alpha))  = 0$, thus the extension above is trivial and we deduce (3).
\end{proof}

\begin{lemma}\label{ampleness}
	Let $N$ be a nef divisor such that $N^{\perp} = \RR_{\geq 0} C$, where $C$ is the numerical class of a curve and let $E$ be any divisor with $E\cdot C <0$. Then there exists some $r_0 > 0$ such that for each $r \geq r_0$ the divisor $rN- E$ is ample.
\end{lemma}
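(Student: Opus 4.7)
The plan is to deduce this from Kleiman's criterion for ampleness: $rN - E$ is ample exactly when $(rN - E) \cdot \gamma > 0$ for every nonzero $\gamma \in \overline{\NE}(X)$. To turn this into a uniform condition I would fix an ample class $A$ and pass to the compact affine slice $B \defeq \overline{\NE}(X) \cap \{A \cdot \gamma = 1\}$, so that strict positivity of $rN - E$ on $\overline{\NE}(X) \setminus \{0\}$ is equivalent to strict positivity of the continuous function $\gamma \mapsto (rN - E) \cdot \gamma$ on $B$.

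By nefness $N \cdot \gamma \geq 0$ on $B$, and by hypothesis the zero locus of this function on $B$ is the single point $p \defeq B \cap \RR_{\geq 0}C$; at $p$ we have $E \cdot p < 0$ since $p$ is a positive multiple of $C$ and $E \cdot C < 0$. Continuity produces an open neighborhood $U$ of $p$ in $B$ on which $E \cdot \gamma < 0$, whence $(rN - E) \cdot \gamma > 0$ on $U$ for every $r \geq 0$ with no constraint on $r$. On the compact complement $B \setminus U$ the function $N \cdot \gamma$ attains a strictly positive minimum $\varepsilon$, while $E \cdot \gamma$ is bounded above by some constant $M$; any choice $r_0 > M/\varepsilon$ then forces $(rN - E) \cdot \gamma > 0$ on $B \setminus U$ for all $r \geq r_0$, and combining with the previous region gives strict positivity on all of $B$, hence ampleness.

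There is no serious obstacle beyond the standard compactness/continuity argument above. In fact, in the Picard rank $2$ setting relevant to this paper the argument collapses entirely: $\overline{\NE}(X)$ has exactly two extremal rays $\RR_{\geq 0}C$ and $\RR_{\geq 0}C'$, positivity on the first is the automatic inequality $-E \cdot C > 0$, and positivity on the second is achieved by any $r_0 > (E \cdot C')/(N \cdot C')$. Here $N \cdot C' > 0$ strictly, since otherwise $C'$ would lie in $N^{\perp} = \RR_{\geq 0}C$, contradicting that $C$ and $C'$ generate distinct extremal rays of $\overline{\NE}(X)$.
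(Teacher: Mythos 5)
Your proof is correct and follows essentially the same route as the paper's: both isolate a neighbourhood of the ray $\RR_{\geq 0}C$ where $-E\cdot\gamma>0$ carries the positivity, and then use compactness of the complementary region in a slice of $\overline{\NE}(X)$ (you use an affine slice $\{A\cdot\gamma=1\}$, the paper a norm sphere) to bound $r_0$ and conclude via Kleiman's criterion. The closing remark about the Picard rank $2$ case is a nice simplification but does not change the substance.
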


\begin{proof}
	First we fix a norm $\norm{\cdot}$ on $N_1(X)$ such that $\norm{C} = 1$. 
	We consider the linear functionals
	\[
	\begin{array}{ccccccc}
	n\colon N_1(X) &\to  & \RR, & &e\colon N_1(X) &\to  & \RR\\
	C & \mapsto & N\cdot C & & C & \mapsto & E\cdot C.
	\end{array}
	\]
	Since $N_1(X)$ is a finite dimensional vector space, the functionals $n,e$ are continuous.
	
	Let $U$ be a neighbourhood of $C$ such that for all $x \in U$, $e(x) <0$.  Then for every $x \in U\cap \overline{\NE}(X)$ and $\epsilon \geq 0$ we have $\left(N - \epsilon E\right)x > 0$.
	
	On the other hand, the set $S = \left(S^1 \setminus U\right)\cap \overline{\NE}(X)$ is a closed subset of the compact set $S_1$, thus compact. This implies that $n(S)$ is also compact and is contained in $(0,+\infty)$, since $\RR_{\geq 0}C \cap S = \emptyset$. Let $m$ be the minimum of $n(S)$. Then for any $x \in S$ and $\epsilon > 0$ we have
	\[
	\left(N - \epsilon E\right)x = N\cdot y - \epsilon E\cdot y \geq m - \epsilon \norm{E}\norm{y} = m - \epsilon \norm{E}.
	\]
	Then for every $\epsilon \leq \epsilon_0 = \frac{m}{\norm{E}}$, $N - \epsilon E$ is strictly positive on $\overline{\NE}(X) \cap S^1$ and thus on the whole $\overline{\NE}(X)$. Finally we set $r_0 = \frac{1}{\epsilon_0}$, $r = \frac{1}{\epsilon}$ and multiply $N - \epsilon E$ by $r$. Kleiman's criterion for ampleness yields the desired result.
\end{proof}

Next we prove a base-point free type lemma.

\begin{lemma}\label{bpf}
	Let $C$ be a smooth rational curve lying on a smooth surface $E$ in the smooth locus of a $3$-fold $X$. We assume that: 
	\begin{itemize}
		\item $C$ generates an extremal ray of $\overline{\NE}(X)$
		\item $E \isom \FF_n$;
		\item $E \cdot C < 0$;
		\item $C\subset E$ is the unique $(-n)$-section of $\FF_n$;
		\item $C$ is the unique irreducible curve on $X$ whose numerical class lies on $\RR_{\geq 0}[C] \subset \overline{\NE}(X)$.
	\end{itemize}
	Then $C$ is contractible, i.e.\ there exists a birational morphism $X \map X'$ that contracts $C$ and only $C$.
\end{lemma}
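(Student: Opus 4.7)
The plan is to construct a base-point-free linear system $|rN|$ on $X$ whose associated morphism contracts exactly the curve $C$, and then to take its Stein factorisation.

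Since $[C]$ generates an extremal ray of $\overline{\NE}(X)$, I fix a nef divisor $N$ on $X$ with $N^{\perp}\cap \overline{\NE}(X) = \RR_{\geq 0}[C]$; that is, $N\cdot C = 0$ and $N\cdot \gamma > 0$ for every irreducible curve class $\gamma$ outside $\RR_{\geq 0}[C]$. Restricting to $E\isom\FF_n$ and writing $C = \sigma$ for the $(-n)$-section, the class $N|_E$ decomposes as $\alpha\sigma + \beta f$; the relation $0 = N\cdot C = -n\alpha + \beta$ forces $N|_E = \alpha(\sigma + nf)$, and $\alpha = N\cdot f > 0$ because $[f]\notin \RR_{\geq 0}[C]$. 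For $r\gg 0$ the system $|rN|_E|$ is then precisely the system realising the contraction $\FF_n\to\overline{\FF}_n$ of Lemma \ref{FF_n bar}, collapsing $\sigma = C$ to a point.

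The technical heart of the argument is showing that $|rN|$ is base-point free on all of $X$ for $r$ large. Applying the preceding ampleness lemma to $N$ and $E$ (noting $E\cdot C < 0$) gives an $r_0 > 0$ such that $rN - E$ is ample for every $r\geq r_0$. Invoking Kawamata--Viehweg vanishing then yields $H^1(X, rN - E) = 0$ for such $r$, and the short exact sequence
\[
0 \to \OO_X(rN - E) \to \OO_X(rN) \to \OO_E(rN|_E) \to 0
\]
produces a surjection $H^0(X, \OO_X(rN)) \twoheadrightarrow H^0(E, \OO_E(rN|_E))$. Combined with very ampleness of $|rN - E|$ off $E$, this implies that $|rN|$ is base-point free: any potential base point lies on $E$ but is resolved by lifting a non-vanishing section from the right-hand side of the sequence above.

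Let $\phi\colon X \to Z$ be the morphism defined by $|rN|$ for such $r$. An irreducible curve $\gamma\subset X$ is contracted by $\phi$ iff $rN\cdot \gamma = 0$, equivalently iff $[\gamma]\in\RR_{\geq 0}[C]$; by the uniqueness hypothesis this forces $\gamma = C$. In particular $\phi$ is generically finite, and taking the Stein factorisation $\phi = g\circ f$ produces a birational morphism $f\colon X \to X'$ that contracts exactly $C$, as required. I expect the main obstacle to be the vanishing step: when $K_X\cdot C > 0$ (the anti-flip situation) the divisor $rN - E$ does not dominate $K_X$ along $C$, so Kawamata--Viehweg needs some care; the cleanest alternative is to use that $X$ is an MDS in all applications of this lemma, in which case Proposition \ref{Properties of MDS}(1) gives semi-ampleness of $N$ directly and the intermediate vanishing argument can be skipped entirely.
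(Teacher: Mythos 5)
Your overall architecture --- take a supporting nef divisor $N$ for the ray $\RR_{\geq 0}[C]$, show $|rN|$ is base-point free, and Stein-factorise the resulting morphism --- is exactly the paper's, and your identification of $N|_E$ with a multiple of $\sigma+nf$ (the system of Lemma \ref{FF_n bar}) is a clean way to see base-point freeness along $E$ once sections lift. The problem is the lifting itself: the vanishing $H^1(X,rN-E)=0$ is the heart of the lemma, and your main-line argument for it does not go through. Kawamata--Viehweg applies to divisors of the form $K_X+(\text{nef and big})$, so ampleness of $rN-E$ gives you nothing directly; you would need $rN-E-K_X$ to be nef and big. But this lemma is used precisely in the anti-flip situation of Proposition \ref{1,m flip}, where $C=\sigma$ is the $(-2)$-section of $E\isom\FF_2$ with $E\cdot\sigma=-1$ and $K_X\cdot\sigma=+1$, so $(rN-E-K_X)\cdot C=-E\cdot C-K_X\cdot C=0$ for every $r$: you are stuck on the boundary of the nef cone no matter how large $r$ is, and nefness and bigness of $rN-E-K_X$ each require a separate argument you have not supplied. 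You flag this yourself, but the fallback you offer --- ``$X$ is an MDS in all applications'' --- changes the hypotheses and is not actually available: the lemma is applied to the blowup $Y$ of $X$ along the secant line, relative over $Z$, and $Y$ is not known to be a Mori dream space (nor is semi-ampleness of $N$ on $Y$ otherwise free).

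The paper's substitute for this step is the missing idea. Set $A=rN-E$, which is ample by Lemma \ref{ampleness}; by Serre vanishing $h^1(X,kA)=0$ for $k\gg 0$. One then proves $h^1(X,kA+iE)=0$ for $0\leq i\leq k-1$ by induction on $i$, using the restriction sequence
\[
0 \map \OO_X(kA+(i-1)E) \map \OO_X(kA+iE) \map \OO_E(kA+iE) \map 0
\]
together with the explicit surface computation of Lemma \ref{cohomology Fn}, which kills $h^1(E,kA+iE)$ because $kA+iE$ is non-negative on fibres and has intersection $\geq -1$ with $\sigma$. Taking $i=k-1$ yields $h^1(X,krN-E)=0$, after which your lifting and Stein-factorisation steps are fine (the paper handles base points off $E$ by writing $rN=A+E$ and multiplying sections, essentially as you do). So the route is the same, but the one genuinely hard step is left open in your write-up, and the tool you propose for it is the wrong one in the only case that matters.
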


\begin{proof}
	Since $C$ generates an extremal ray of $\overline{\NE}(X)$, there exists a nef divisor $N$ such that $N^{\perp} = \RR_{\geq 0}[C]$. By Lemma \ref{ampleness}, the divisor $A \defeq rN - E$ is ample for $r \gg 0$. Then there exists some $k_0 \in \NN$ such that $h^1(X,kA) = 0$ for every $k\geq k_0$. We fix such $k$  and using induction on $i$ we will prove that $h^1(X,kA+iE)= 0$ for $0\leq i \leq k-1$.
	
	For the base case $i = 0$ we get $h^1(X,kA) = 0$ by our choice of $A$ and $k$. For the inductive step we assume that $h^1\left(X,kA + (i-1)E\right) = 0$ and consider the exact sequence
	\[
	H^1(X,\OO(kA + (i-1)E)) \map H^1(X,\OO(kA + iE)) \map H^1(E,\OO(kA + iE)).
	\]
	By possibly repicking $r$ even larger, again using Lemma \ref{ampleness}, we may assume that $kA+iE$ is positive against any fibre of $E \isom \FF_n \map \PP^1$. Moreover it is also positive against $C$ and thus using Lemma \ref{cohomology Fn} we  get $h^1(E,kA + iE) = 0$. Consequently, using the inductive hypothesis we get that $h^1(X,kA + iE) = 0$ for $0\leq i \leq k-1$. Especially, for $i = k-1$ we get $h^1(X,krN-E) = 0$.
	
	Notice that $rN = A + E$ where $A$ is ample and $E$ is effective. This implies that among the global sections of $rN$ there are sections of the form $s_i = a_ie_i \in H^0(X,A) \otimes H^0(X,E) \subseteq H^0(X,rN)$. Since $A$ is ample, this immediately implies that $rN$ has no base points away from $E$.
	
	As for any base points on $E$ we consider the exact sequence
	\[
	H^0(X,\OO(krN)) \map H^0(E,\OO(krN)) \map H^1(X,\OO(krN - E)) = 0.
	\]
	Then $krN|_E$ is a nef divisor on a smooth surface, zero only against a rational curve of negative self-intersection. Consequently, it is semi-ample (see for example the proof of \cite[Theorem 1-1-6]{Matsuki}). Finally using the exact sequence above we may lift sections of $krN|_E$ to sections of $krN$, proving that the stable base locus of $N$ does not meet $E$. Thus $N$ is semiample.
\end{proof}

\begin{proposition}[The $(-1,-m)$-FLIP]\label{1,m flip}
	Let $C$ be smooth rational curve lying in the smooth locus of a $3$-fold $X$. Suppose that $X \map Z$ is a contraction morphism, contracting only $C$ and that the normal bundle $N_{C/X}$ of $C$ in $X$ is isomorphic to $\OO_{\PP^1}(-1) \oplus \OO_{\PP^1}(-m)$ with $1 \leq m \leq 3$. 
	Then the anti-flip of $C$ exists (i.e.\ there exists an SQM of $X$ over $Z$ centred at $C$ and the target variety has at worst terminal singularities).
\end{proposition}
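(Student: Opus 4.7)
The approach is to construct $X^+$ explicitly via a $2$-ray game on the blowup $\pi\colon X' \defeq \Bl_C X \to X$ over $Z$, proceeding by induction on $m$. Applying Lemma \ref{intersection numbers} with $(\alpha,\beta) = (-1,-m)$, the exceptional divisor is $E \isom \FF_{m-1}$; since $2\alpha-\beta = m-2 \leq 1 < 2$ for $m \leq 3$, part (3) of the same lemma identifies the normal bundle of the distinguished section $C' \subset E$ (the unique $(-(m-1))$-section of $E$ when $m \geq 2$, or a fixed $0$-section when $m = 1$) as $N_{C'/X'} \isom \OO_{\PP^1}(1-m) \oplus \OO_{\PP^1}(-1)$. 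Because $X \to Z$ is small with $\rho(X/Z) = 1$ and $\pi$ is divisorial, $\rho(X'/Z) = 2$; hence $\overline{\NE}(X'/Z)$ has exactly two extremal rays, namely $[f]$ (contracted by $\pi$) and a second ray $R$ whose contraction will produce $X^+$.

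For the base case $m = 1$, $E \isom \FF_0 \isom \PP^1 \times \PP^1$ and $R$ is generated by a $0$-section of the second ruling, which moves in a covering family of $E$. Contracting that ruling yields a divisorial morphism $X' \to X^+$ with exceptional locus $E$, collapsed to a smooth rational curve $C^+ \subset X^+$; the composition $X \dashleftarrow X' \to X^+$ is the standard Atiyah flop and $X^+$ is smooth. For the inductive step $m \in \{2,3\}$, the curve $C'$ is rigid and $R = \RR_{\geq 0}[C']$. I would verify the hypotheses of Lemma \ref{bpf} for $C' \subset E \subset X'$ (namely $E \cdot C' = -1 < 0$ from Lemma \ref{intersection numbers}(1); $E \isom \FF_{m-1}$; $C'$ is the unique $(-(m-1))$-section and the unique irreducible curve in its numerical class; and $[C']$ is extremal in $\overline{\NE}(X')$) to obtain a small contraction $X' \to W$ over $Z$ contracting only $C'$. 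Since $N_{C'/X'} \isom \OO(-1) \oplus \OO(-(m-1))$ has the shape hypothesised in the proposition with parameter $m-1 < m$, the inductive hypothesis supplies the anti-flip $\chi\colon X' \dashrightarrow X''$ over $W$ with $X''$ terminal. Under $\chi$, the strict transform $E''$ of $E$ is a ruled surface whose negative section has been replaced by the anti-flipped curve, so a generic section of $E'' \to C$ now moves in a covering family; contracting this family by the same kind of argument as in the $m = 1$ case produces $X'' \to X^+$, and the composition $X \dashleftarrow X' \dashrightarrow X'' \to X^+$ is the desired SQM over $Z$.

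The main obstacle is confirming that $X^+$ has at worst terminal singularities. One tracks discrepancies through the construction using that they decrease under anti-flips (as already used in Proposition \ref{infinte K-positive curves}): the smooth $X'$ has all positive discrepancies, $\chi$ preserves terminality by the inductive hypothesis, and the final divisorial contraction introduces exactly one singular point at the image of $E''$, which a direct local computation on the toric model of the $(-1,-m)$-flip identifies as a terminal cyclic quotient — the $\frac{1}{2}(1,1,1)$ Francia singularity when $m = 2$, and a terminal cyclic quotient on Reid's list when $m = 3$. A secondary technical point is verifying extremality of $[C']$ in the full Mori cone $\overline{\NE}(X')$, so that Lemma \ref{bpf} actually applies; this can be established by exhibiting a nef divisor on $X'$ vanishing on $[C']$ obtained by pulling back a suitable ample class from $X$ and adjusting by a small multiple of $E$.
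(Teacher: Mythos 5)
Your proposal is correct and follows essentially the same route as the paper: blow up $C$ to get $E\isom\FF_{m-1}$, use Lemma \ref{intersection numbers} to identify the normal bundle $\OO_{\PP^1}(-1)\oplus\OO_{\PP^1}(-(m-1))$ of the negative section, contract it via Lemma \ref{bpf}, anti-flip it by the lower case (the paper handles $m\le 2$ by citing the explicit Atiyah/Francia resolutions and does $m=3$ by hand, which your induction repackages), and finally contract the strict transform of $E$ to a terminal cyclic quotient point. The only cosmetic difference is that you certify terminality of the last step by a local toric computation, whereas the paper observes that the contracted ray is $K$-negative ($K\cdot\sigma_+'=-1$) so terminality is automatic.
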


\begin{proof}
	The cases $m = 1$ and $2$ are the classical cases of the Atiyah flop and the Francia flip respectively. We refer to \cite[6.10, p.\ 162]{Debarre} for the explicit construction of the resolution of the anti-flip.
	
	Suppose that $m = 3$. Write $Y \map X$ for the blowup of $C$ with exceptional divisor $E \isom \FF_{m-1} = \FF_{2}$. Denote by $\sigma$ the $(-2)$-section and by $f$ a fibre of $E \map C$. Then the relative cone of curves $\NE(Y/Z)$ is generated by the classes of $\sigma$ and $f$ and by Lemma \ref{bpf}, $\sigma$ is contractible.	
	
	By Lemma \ref{intersection numbers} the normal bundle of $\sigma$ in $Y$ is $\OO_{\PP^1}(-1)\oplus \OO_{\PP^1}(-2)$ and so the anti-flip $\chi \colon Y \psmap Y'$ of $\sigma$ exists. More specifically, $\chi$ is the inverse of a Francia flip. If $Y \map \hat Z$ is the contraction of $\sigma$, we have the diagram
	\[
	\xymatrix@R=0.2cm@C=0.4cm{
		Y \ar@{..>}[rr]^{\chi} \ar[rd] \ar[dd] && {Y}' \ar[ld]\\
		& \hat Z \ar[dd]\\
		X \ar[rd]\\
		& Z
	}
	\]
	where $\hat Z \map Z$ is the morphism induced by the inclusion $\overline{\NE}(Y/\hat{Z}) \subset \overline{\NE}(Y/Z)$.
	Using the explicit resolution of the Francia flip in \cite[6.10, p.\ 162]{Debarre} in conjunction with Lemma \ref{intersection numbers}(2), one can check that the restriction of $\chi$ on $E$ is the contraction of the $(-2)$-curve.	
	The relative cone of curves $\NE(Y'/Z)$ is generated, over $\QQ$, by the classes of the anti-flipped curve as well as the class of any curve in the strict transform $E'$ of $E$. This is because $E' \isom \bar{\FF}_2$ and so, by Lemma \ref{FF_n bar}, $\rho(E') = 1$, thus the numerical class of any curve on it covers $E'$. Thus if $\sigma_+$ is a section of $E$ disconnected from $\sigma$, $\sigma_+' \defeq \chi(\sigma_+)$ generates an extremal ray of $\NE(Y')$. Furthermore since we chose $\sigma_+'$ to be disconnected from the centre of $\chi$ we have
	\[
	K_{Y'}\cdot \sigma_+' = K_Y\cdot \sigma_+  = K_Y\cdot (\sigma + 2f) = 1 - 2 = -1.
	\]
	Thus $\sigma_+'$ is contractible by a divisorial contraction $Y' \map X'$ and since it's $K_{Y'}$-negative and $Y'$ has terminal singularities, then so does $X'$. The diagram above becomes
	\[
	\xymatrix@R=0.2cm@C=0.4cm{
		Y \ar@{..>}[rr]^{\chi} \ar[rd] \ar[dd] && {Y}' \ar[ld] \ar[dd]\\
		& \hat Z \\
		X \ar[rd] \ar@{..>}[rr]^{\psi}&& X'\ar[ld]\\
		& Z
	}
	\]	
	where $\psi$ is the birational map induced by the diagram and actually the required pseudo-isomorphism. 
\end{proof}

Schematically the resolution described in the proof above looks as follows
\begin{center}
	\begin{tikzpicture}[scale=0.3]
	\node at (0,0) [circle,fill,inner sep=1pt]{};

	\draw[->]  (-5,5/3) -- (-4,4/3);

	\draw (-15,3) -- (-9,3);
	\node at (-9,3.5) {$\scriptstyle{C}$};

	\draw[->] (-12,5) -- (-12,4);

	\draw (-15,6) -- (-14,9) -- (-8,9) -- (-9,6) -- (-15,6);
	\draw[red,thick] (-14,9) -- (-8,9);
	\node[red] at (-8,9.5) {$\sigma$};
	\node at (-11.5,7.5) () {$\FF_2$};

	\draw[->] (-12,11) -- (-12,10);

	\draw[red] (-14,15) -- (-15,18) -- (-9,18) -- (-8,15) -- (-14,15);
	\draw[blue,thick] (-15,18) -- (-9,18);	
	\node at (-11.5,16.5) () {$\FF_1$};	
	\draw (-15,11.95) -- (-14,14.95) -- (-8,14.95) -- (-9,11.95) -- (-15,11.95);
	\node at (-11.5,13.5) () {$\FF_2$};

	\draw[->] (-7,21.25) -- (-8,20.25);

	\draw[blue] (-3,25.05) -- (-4,28.05) -- (2,28.05) -- (3,25.05) -- (-3,25.05);
	\node at (-0.5,26.55) {$\FF_0$};
	\draw[red] (-3,22) -- (-3,25) -- (3,25) -- (3,22)-- (-3,22);
	\node at (-0,23.4) {$\FF_1$};
	\draw (-4,18.95) -- (-3,21.95) -- (3,21.95) -- (2,18.95) -- (-4,18.95);
	\node at (-0.5,20.35) {$\FF_2$};
	
	\draw[->] (7,21.25) -- (8,20.25);

	\draw[blue] (9,17)-- (11,18) -- (13,19);
	\draw[red] (14,15) -- (11,18) -- (8,15) -- (14,15);
	\draw (13,11.95) -- (14,14.95) -- (8,14.95) -- (7,11.95) -- (13,11.95);
	\node at (10.5,13.5) () {$\FF_2$};
	\node at (11,16.25) () {$\PP^2$};	
	
	\draw[->] (10,11) -- (10,10);

	\draw[blue] (8,8) -- (10,9) -- (12,10);
	\node[blue] at (12,9) {$\sigma'$};
	\draw (13,6) -- (10,9)-- (7,6) -- (13,6);
	\node[on above layer, red] at (10,9) {$\bullet$};
	\node at (10,7.25) () {$\overline{\FF}_2$};

	\draw[->] (10,5) -- (10,4);

	\draw[blue] (8,2) -- (10,3) -- (12,4);
	\node[on above layer, red] at (10,3) {$\bullet$};
	\node[blue] at (12,3) {$\scriptstyle{C'}$};	
	
	\draw[->]  (5,5/3) -- (4,4/3);
	
	\end{tikzpicture}
\end{center}

\noindent where the dot represents the terminal point of $X'$ which is actually a quotient singularity of type $\frac{1}{3}(1,1,2)$.

\subsection{Conclusion}

\begin{proposition}\label{4-secant lines}
	Let $C\subset S$ be one of the curves in Table \ref{List of candidate curves} so that the strict transforms of the maximal secant lines have normal bundle $\OO_{\PP^1}(-1)\oplus \OO_{\PP^1}(-m)$, with $1\leq m \leq 3$. Let $\chi\colon X \psmap X'$ be the anti-flip of those strict transforms as constructed in Proposition \ref{1,m flip}. 
	
	If $C$ admits no $4$-secant lines, then $X'$ is Fano. Otherwise, $X'$ is weak-Fano and the strict transforms of $4$-secant lines generate (and are the only irreducible curves whose class is contained in) an extremal ray of $\NE(X')$.
\end{proposition}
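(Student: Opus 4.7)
The plan is to describe $\Nef(X')$ explicitly as a cone generated by two specific classes and then locate $-K_{X'} = 4H - E$ inside it. Since $\chi$ is a pseudo-isomorphism of $\QQ$-factorial terminal threefolds, $N^1(X') = N^1(X)$ and $\rho(X') = 2$. A preliminary observation: the maximal secants lie on pairwise disjoint $(-1)$-curve classes on the cubic $S$ (one verifies $c_i \cdot c_j = 0$ for $i \neq j$, which is relevant for Type 1 where both $c_1$ and $c_2$ are maximal), so their strict transforms in $X$ are pairwise disjoint and the simultaneous anti-flip of Proposition~\ref{1,m flip} produces a $\QQ$-factorial terminal $X'$.

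The central claim is that $\Nef(X') = \mathrm{cone}(m_{\max}H - E,\, m'H - E)$, where $m_{\max} \geq 5$ is the maximum secant value of $C$ and $m'$ is the next-largest one---from Table~\ref{List of candidate curves}, $m' = 4$ for Types 1--5 and $m' = 3$ for Type 6. The shared wall $\RR_{\geq 0}(m_{\max}H - E)$ with $\Nef(X) = \mathrm{cone}(H, m_{\max}H - E)$ is dictated by the Mori chamber structure of Proposition~\ref{Properties of MDS}, being orthogonal to the anti-flipped class $l - m_{\max}f$. For the other wall I would argue, first, that $m'H - E$ is not in the interior by exhibiting an effective curve on which it vanishes, namely the strict transform in $X'$ of an $m'$-secant line; this is legitimate because the $m'$-secants are disjoint from the maximal secants on $S$, so they are not in the flipping locus (case-by-case: $c_1 \cdot c_i = 0$ for $i \neq 1$, $c_1 \cdot l_{5,6} = 0$ in Type 3, and so on). Second, I would show $m'H - E$ is itself nef on $X'$: the key is that any effective class $dl - mf$ on $X$ with $m/d > 3$ is spanned by line classes on $T$ by Proposition~\ref{lines are enough}, and Lemma~\ref{term} rules out combinations yielding $m/d > m'$ (for instance, for $d = 2$ this would require two intersecting lines on $S$ with $C$-multiplicity summing to $\geq 2m' = 8$, excluded by the lemma); the new curves produced by the anti-flip sit on the extremal ray of $\NE(X')$ opposite the $l - m'f$ ray, so they pair positively against $m'H - E$.

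Given $\Nef(X') = \mathrm{cone}(m_{\max}H - E,\, m'H - E)$, the conclusion is immediate. The divisor $-K_{X'} = 4H - E$ lies in this cone iff $m' \leq 4 \leq m_{\max}$, and since $m_{\max} \geq 5$ this reduces to $m' \leq 4$. For Types 1--5 ($m' = 4$), $-K_{X'}$ sits exactly on the wall $m'H - E$, so $X'$ is weak Fano and the dual extremal ray $\RR_{\geq 0}[l - 4f]$ is $K_{X'}$-trivial and generated by the strict transforms of $4$-secant lines. For Type 6 ($m' = 3$), $-K_{X'}$ is interior to $\Nef(X')$, so $X'$ is Fano. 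The uniqueness of irreducible curves on the $4$-secant ray follows by the same line-combinatorics argument: an irreducible $\gamma \subset X'$ with $[\gamma] = k(l - 4f)$ for $k \geq 2$ would force a configuration of intersecting lines violating Lemma~\ref{term}, hence $k = 1$ and $\gamma$ is the strict transform of a $4$-secant.

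The main obstacle is the nefness of $m'H - E$ on $X'$: while the line-combinatorics via Proposition~\ref{lines are enough} and Lemma~\ref{term} controls strict transforms of curves already present in $X$, one must also rule out any unforeseen extremal ray created by the anti-flip itself---equivalently, show that the flipped curves' classes land on the ``opposite'' ray of $\NE(X')$ (orthogonal to $m_{\max}H - E$), so that they do not introduce new obstructions to nefness.
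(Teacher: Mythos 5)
Your reformulation via the Mori chamber structure is sound in outline and reaches the correct answer, but the step you yourself flag as ``the main obstacle'' --- the nefness of $m'H-E$ on $X'$ --- is exactly the content of the proposition, and the argument you offer for it does not close. In cases $1$--$5$ one has $m'=4$, so ``$4H-E=-K_{X'}$ is nef on $X'$'' \emph{is} the statement ``$X'$ is weak Fano''; reducing the proposition to the identification of the far wall of $\Nef(X')$ is only a restatement unless that wall is actually located. The line combinatorics you invoke (Proposition \ref{lines are enough} together with Lemma \ref{term}) control irreducible curves on $X$, where $m'H-E$ is certainly \emph{not} nef, and they do not transfer to $X'$: if $\gamma\subset X$ is irreducible, not contained in the flipping locus, and $\gamma'$ is its strict transform, then $[\gamma']=[\gamma]+c\,(l-m_{\max}f)$ with $c\ge 0$ (the difference of classes is orthogonal to the wall divisor $m_{\max}H-E$, and the sign of $c$ follows from the negativity lemma applied to $q^*H'-p^*H$ on a common resolution), whence
\[
(m'H-E)\cdot\gamma'=(m'H-E)\cdot\gamma-c\,(m_{\max}-m')\le (m'H-E)\cdot\gamma .
\]
So intersection numbers against $m'H-E$ (and against $-K$) can only \emph{drop} when passing to $X'$ for curves meeting the flipped locus --- this is exactly the inequality of Proposition \ref{infinte K-positive curves} --- and non-negativity on $X$ proves nothing on $X'$: a $3$-secant line meeting the $5$-secant could a priori become $K_{X'}$-positive. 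Your closing remark disposes only of the flipped curves themselves, which is the easy half; the curves that actually threaten nefness are the strict transforms of curves of $X$ meeting the flipping locus, and nothing in the proposal constrains them. The same issue undermines the uniqueness claim at the end.

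The paper closes precisely this gap by a different device: since $\chi$ is a pseudo-isomorphism, $\bigoplus_m H^0(X',-mK_{X'})=\bigoplus_m H^0(X,-mK_X)$, so $X$ and $X'$ share their anticanonical model $Z$. Any curve $c'\subset X'$ with $-K_{X'}\cdot c'\le 0$ is contracted by or lies in the base locus of $X'\dashrightarrow Z$; its strict transform in $X$ (which exists, as the $\chi^{-1}$-exceptional curves are $K_{X'}$-negative) then has the same property with respect to $X\dashrightarrow Z$, hence satisfies $-K_X\cdot c\le 0$ and must be one of the $4$-, $5$- or $6$-secant lines. Since the $5$- and $6$-secants are exactly the flipped locus, only the $4$-secants (disjoint from it, hence with preserved, $K$-trivial classes) survive, giving both the (weak) Fanoness and the description of the $K_{X'}$-trivial extremal ray in one stroke. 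To salvage your chamber computation you would need a mechanism of this kind that transfers information about $K$-non-positive curves from $X'$ back to $X$, rather than combinatorics of lines on $S$ computed in $X$.
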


\begin{proof}
	Let $f'\colon X' \rmap Z$ be the anti-canonical model of $X'$. Since $\chi$ is a pseudo isomorphism, we have
	\[
	Z = \Proj\left(\bigoplus_{m\in \NN}H^0(X',-mK_{X'})\right) = \Proj\left(\bigoplus_{m\in \NN}H^0(X,-mK_{X})\right).
	\]
	Now any curve $c'$ in $X'$ such that $-K_{X'}\cdot c' \leq 0$ is either contracted by or in the base locus of $f'$. 
	If $f \defeq f'\circ \chi$ and $c$ is the strict transform
	of $c'$ under $\chi^{-1}$ then, by the equality above, $c$ is again either contracted by or in the base locus of $f$.  Thus $-K_X\cdot c \leq 0$. Assume for a second that those curves are exactly the $4$, $5$ or $6$-secant lines. Then a look at the table shows that when $C$ admits a $6$-secant line then it admits no $5$-secant lines. Thus, after the anti-flip, $X'$ is Fano if and only if $C$ admits no $4$-secant lines. Moreover, if $C$ admits $4$-secant lines, then their strict transforms are the only $K_{X'}$-zero curves.
	
	To complete the proof we have to show that the only curves with $-K_X\cdot c \leq 0$ are  the $4$, $5$ or $6$-secant lines. Let $c$ be such a curve, i.e.\ $c \sim dl-mf$ with $\frac{m}{d} \geq 4$, which is not a line.	
	Then $c$ is contained in the cubic surface $S$ and so we may write 
	\[
	c \sim_S l_1 + \dots + l_d,
	\]
	with $l_i \sim l-m_i f$ being lines in $S$.
	We first note that if for some $1\leq i \leq d$, $l_i$ does not meet any of the other lines in the decomposition then $(c\cdot l_i)_S \leq -1$ and thus $c$ is not irreducible. Thus we may assume that every line intersects another one. If all $l_i$ were $4$-secants then, by Lemma \ref{term}, they wouldn't intersect each other and so, for $\frac{m}{d}$ to be greater than or equal to $4$, we may assume that some of the lines in the decomposition of $c$ are $5$ or $6$-secants.
	From Table \ref{List of candidate curves}, we see that in all cases there are at most two $5$ or $6$-secant lines. We rearrange the decomposition of $c$ in the following way:
	\[
	c \sim_S a_1 l_1 + a_2l_2 + \sum_i^{e}l^{12}_i + \sum_j^{r_1}l^1_j + \sum_k^{r_2}l^2_k  + \sum_n^D l^0_n,
	\]
	where $D= d - a_1 - a_2 -e - r_1 - r_2$ and $l_1$ and $l_2$ are the two $5$ or $6$-secants; 
	the lines $l^{12}_i$ are the lines that meet both $l_1$ and $l_2$; 
	the lines $l^1_j$ meet $l_1$ but not $l_2$; 
	the lines $l^2_k$ meet $l_2$ but not $l_1$; 
	the lines $l^0_n$ meet none of the $l_1$ and $l_2$. 
	If there is only one $5$ or $6$-secant, we simply choose $a_2 = 0$ and get empty sums for $l_i^{12}$ and $l_k^2$.
	Intersecting with $l_1$ and $l_2$ respectively we get $a_1 \leq e + r_1$ and $a_2 \leq e + r_2$. By Lemma \ref{term}, for any $j$ and $k$, we have $C\cdot(l_1 + l^1_j), C\cdot\left(l_2 + l^2_k\right) \leq 7$. Similarly, checking Table \ref{List of candidate curves} we see that for any $i$,  we have $C\cdot(l_1 + l_2 + l^{12}_i)\leq 11$. Finally, we have
	\begin{align*}
	m = C\cdot c &= C\cdot \left(a_1l_1 + a_2l_2 + \sum_i^{e}l^{12}_i + \sum_j^{r_1}l^1_j + \sum_k^{r_2}l^2_k  + \sum_n^D l^0_n\right)\\
	&\leq C \cdot \left((e+r_1)l_1 + (e+r_2)l_2 + \sum_i^{e}l^{12}_i + \sum_j^{r_1}l^1_j + \sum_k^{r_2}l^2_k  + \sum_n^D l^0_n\right)\\
	&=C\cdot \left(\sum_i^{e} (l_1 + l_2 + l^{12}_i) + \sum_j^{r_1} (l_1 + l^1_j) + \sum_k^{r_2} (l_2 + l^2_k) +  \sum_n^D l^0_n \right)\\
	&\leq 11e + 7(r_1+r_2) + 4D\leq 4d - e - r_1 -r_2 < 4d,
	\end{align*}
	which contradicts $\frac{m}{d} \geq 4$.
\end{proof}

\begin{remark}\label{sum of lines}
	In cases $5$ and $6$ of  Table \ref{List of candidate curves}, a similar argument as the one presented in the proof above, gives us a more precise result. Namely, that any curve with $-K_X\cdot c < 1$ is a $4$, $5$ or $6$-secant line. 
	
	Indeed, we may repeat the calculation above but instead of singling out only the $5$-secant lines, we single out the $4$-secants as well. We then only need to observe that in those cases, any two intersecting lines we $l_1$ and $l_2$ we have $C\cdot (l_1 + l_2 ) \leq 6$ and for any line $l_3$ joining a $4$-secant $l_1$ and a $5$-secant $l_2$ we have $C\cdot (l_1 + l_2 + l_3) \leq 9$.
\end{remark}

\begin{theorem}
	Let $C$ be a space curve lying on a smooth cubic surface $S$ such that $X \defeq \Bl_{C}\PP^3$ is not weak Fano. Then $C$ induces a Sarkisov link if and only if its class on $S$ appears in Table \ref{List of candidate curves} (up to the assumptions of Remark $\ref{assumptions on the type}$).
\end{theorem}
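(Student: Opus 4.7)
The necessity direction is essentially established already: Section \ref{Curves on cubic surfaces} shows via Proposition \ref{infinte K-positive curves} and Lemma \ref{term}, together with the bounds derived from Lemma \ref{secancy}, that any curve on a smooth cubic whose blowup is not weak Fano and which induces a Sarkisov link must have a type fitting into Table \ref{List of candidate curves}. So the plan is to concentrate on the converse: given a curve $C$ whose type appears in Table \ref{List of candidate curves}, produce a full Sarkisov link. The overall strategy is to verify the hypotheses of Proposition \ref{Induces link iff MDS} by explicitly playing the $2$-ray game on $X = \Bl_C \PP^3$, using the machinery developed in Section \ref{Existence of links}.

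The first step is to invoke Proposition \ref{X is an MDS}, which gives that $X$ is a Mori dream space regardless of the entry of Table \ref{List of candidate curves}. Next, since $X$ is not weak Fano, Proposition \ref{lines are enough} ensures that the extremal ray of $\overline{\NE}(X)$ opposite to the fibre ray is generated by the strict transforms of $m$-secant lines with $m \geq 5$. One reads off $m$ directly from the entries $C\cdot c_i$, $C\cdot l_{i,j}$ of Table \ref{List of candidate curves}: for each row, the maximal $m$ equals the largest intersection number listed. By Proposition \ref{normal bundles of lines}, each such $m$-secant line has normal bundle $\OO(-1)\oplus \OO(3-m)$, so $3-m \in \{-2,-3\}$ in all cases, placing us squarely in the hypotheses of Proposition \ref{1,m flip} (after relating $m$-secant line to the Francia/terminal $3$-fold anti-flip with parameter $m-3\in\{2,3\}$). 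This anti-flip $\chi\colon X \psmap X'$ exists and produces a $\QQ$-factorial terminal threefold $X'$.

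By Proposition \ref{4-secant lines}, the outcome $X'$ is either Fano (for the curves admitting no $4$-secant lines) or weak Fano (for the curves admitting $4$-secant lines, where the $4$-secants then generate the second extremal ray of $\overline{\NE}(X')$). In either case, $X'$ is of Fano type by Lemma \ref{weak Fano implies Fano type}, hence an MDS by Proposition \ref{Fano type is MDS}, and so the $2$-ray game on $X'$ can be continued. If $X'$ is Fano, then $X'$ itself is the central model and the opposite ray is contractible to a Mori fibre space, finishing the link. If $X'$ is weak Fano, one contracts the $K_{X'}$-trivial $4$-secant extremal ray; here the contraction is a small contraction (since the curves in question form a finite family by the bounds of Section \ref{Curves on cubic surfaces}), and the flop exists by the MDS property, yielding the central model and, after one more step, a Mori fibre space target $Y\to T$. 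In every case the resulting diagram satisfies the Sarkisov properties of Definition \ref{Sarkisov link}: terminality of all $\QQ$-factorial targets is guaranteed at each step by Proposition \ref{1,m flip} for the anti-flip, by the flop preserving terminality, and by the $K$-negativity of the final divisorial contraction, so Proposition \ref{Induces link iff MDS} then yields that $C$ induces a Sarkisov link.

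The main obstacle I expect is the case-by-case bookkeeping required after the first anti-flip. Specifically, one must identify, for each of the six curves, what the second extremal ray of $\overline{\NE}(X')$ looks like (whether it is contractible to a point, a curve, or a surface), and verify by an explicit geometric description that the terminal check in Proposition \ref{Induces link iff MDS} is satisfied. The candidates in $\mathcal{T}^{(II)}$ lead to a $K_{X'}$-negative divisorial contraction and hence to a terminal Fano target of Picard rank $1$, while those in $\mathcal{T}^{(I)}$ lead to a Mori fibration onto $\PP^1$; distinguishing these cases requires inspecting the multisecant data of Table \ref{List of candidate curves}, and constructing the second contraction model-by-model. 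Once this is carried through and the link is built, the identification of the targets (Type II Fano threefolds and Type I degree-$5$ and degree-$4$ del Pezzo fibrations, respectively, as stated in Theorem \ref{theorem intro}) is relegated to Section \ref{Construction and study of the links}.
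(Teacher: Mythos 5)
Your proposal is correct and follows essentially the same route as the paper: establish the MDS property via Proposition \ref{X is an MDS}, obtain a Sarkisov-like diagram from Proposition \ref{Induces link iff MDS}, and then verify terminality by combining Propositions \ref{normal bundles of lines}, \ref{1,m flip} and \ref{4-secant lines}. The only difference is that the case-by-case bookkeeping you anticipate after the first anti-flip is not needed: once Proposition \ref{4-secant lines} shows the anti-flipped variety is (weak) Fano, every subsequent step of the $2$-ray game is $K$-non-positive and therefore automatically preserves terminal singularities, which is exactly how the paper closes the argument.
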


\begin{proof}
	The first implication is clear since the curves appearing in Table \ref{List of candidate curves} are exactly those satisfying the necessary conditions of Proposition \ref{necessary conditions}.
	
	Conversely, assume that $C$ is one of the curves in the Table. We first note that in all cases Proposition \ref{X is an MDS} implies that the blowup of $C$ is always an MDS. In turn, by Proposition \ref{Induces link iff MDS}, we are guaranteed the existence of a Sarkisov link as long as the varieties produced by the $2$-ray game are terminal, which we now check: 
	By Propositions \ref{normal bundles of lines} and \ref{1,m flip} the anti-flip of the $5$ or $6$-secant lines produces a terminal 3-fold. Furthermore, by Proposition \ref{4-secant lines} the 3-fold is (weak-)Fano. Thus any further step in the $2$-ray game is $K$-non-positive and so retains the terminal singularities.
\end{proof}

\section{Study of the links}\label{Construction and study of the links}

In this final section we aim to finish the construction of the links produced in the previous section. We also calculate some invariants of the targets of the links such as their singularities and the cube of the anti-canonical divisor.

\subsection{Some preliminary calculations}

\begin{lemma}[{\cite[Lemma 2.4]{WeakFanos}}]\label{cube anticanonical}
	Let $C\subset Y$ be a smooth curve of genus $g$ in a smooth $3$-fold and let $\pi\colon X \map Y$ be the blowup of $C$. Then
	\[
	(-K_X)^3 = (-K_Y)^3 + 2(-K_Y)C - 2 + 2g. 
	\]
	In particular, if $Y = \PP^3$ and $C$ is a curve of degree $d$, we have $(-K_X)^3 = 62 - 8d + 2g$.
\end{lemma}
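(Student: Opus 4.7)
The plan is to compute $(-K_X)^3$ by expanding the relation $K_X = \pi^* K_Y + E$ and evaluating the four resulting terms with standard intersection theory on a blowup. Writing
\[
(-K_X)^3 = -(\pi^* K_Y + E)^3 = -(\pi^* K_Y)^3 - 3(\pi^* K_Y)^2 \cdot E - 3\pi^* K_Y \cdot E^2 - E^3,
\]
the task reduces to evaluating four intersection numbers on $X$.

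I would handle each term via the projection formula and the structure of the exceptional divisor $E \isom \PP(N_{C/Y})$. The first term is $(\pi^* K_Y)^3 = K_Y^3$ since $\pi$ is birational. The second vanishes because $\pi_* E = 0$ (as $E$ is contracted to the curve $C$), so $(\pi^* K_Y)^2 \cdot E = K_Y^2 \cdot \pi_* E = 0$. For the third, I would invoke the standard identity $\pi_*(E^2) = -[C]$, which comes from $\OO_X(E)|_E \isom \OO_E(-1)$ together with the fact that $E|_E$ has intersection $-1$ with every fibre of $\pi|_E \colon E \to C$; the projection formula then gives $\pi^* K_Y \cdot E^2 = -K_Y \cdot C$. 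For the last term, on the rank-$2$ $\PP^1$-bundle $E \to C$ one has $c_2(N_{C/Y}) = 0$ (since $C$ is a curve), and the Grothendieck relation, combined with the Segre-class computation of the pushforward, produces $E^3 = -\deg N_{C/Y}$.

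The final ingredient is adjunction on $C \subset Y$: from $\omega_C \isom \omega_Y|_C \otimes \det N_{C/Y}$ one obtains $\deg N_{C/Y} = 2g - 2 - K_Y \cdot C$. Substituting all four contributions collapses the expansion into the claimed closed form (up to rewriting in terms of $-K_Y$). For the special case $Y = \PP^3$, I would plug in $(-K_Y)^3 = 64$ and $K_Y \cdot C = -4d$ to arrive at $(-K_X)^3 = 62 - 8d + 2g$.

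The only delicate point is the bookkeeping of signs stemming from the convention $\OO_X(E)|_E \isom \OO_E(-1)$, which propagate into $\pi_*(E^2) = -[C]$ and $E^3 = -\deg N_{C/Y}$; beyond this, the proof is a routine intersection calculation with no substantive obstacle.
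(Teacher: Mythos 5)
Your computation is correct and is the standard one; the paper itself gives no proof of this lemma (it is quoted from \cite[Lemma 2.4]{WeakFanos}), and the argument there is the same expansion of $(\pi^*K_Y+E)^3$ using $\pi_*(E^2)=-[C]$, $E^3=-\deg N_{C/Y}$ and adjunction on $C$. One remark your calculation makes visible: the four contributions sum to $(-K_X)^3=(-K_Y)^3+2\,K_Y\cdot C-2+2g=(-K_Y)^3-2\,(-K_Y)\cdot C-2+2g$, so the displayed general formula in the statement carries a sign slip in the term $2(-K_Y)C$; the specialization $62-8d+2g$ for $Y=\PP^3$ (where $(-K_Y)\cdot C=4d$) confirms that your version, not the displayed one, is the intended identity.
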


In the following Lemma we use the notation introduced in Lemma \ref{FF_n bar}.

\begin{lemma}\label{cube of the anticanonical, contraction}
	Let $X \maps{p} Y$ be a divisorial contraction to a point, between $\QQ$-factorial terminal threefolds with exceptional divisor $E$.	
	If $E$ lies on the smooth locus of $X$ and $K_X = p^*K_Y + \alpha E$ is the ramification formula then
	
	if $E \isom \PP^1 \times \PP^1$ with normal bundle $N_{E/X} = \OO_{\PP^1 \times \PP^1}(-1,-1)$, $\alpha = 1$;
	
	if $E \isom \PP^2$ with normal bundle $N_{E/X} = \OO_{\PP^2}(-2)$, $\alpha = \frac{1}{2}$.\\	 
	Moreover, we have $-K_Y^3 = -K_X^3 + 2$, $-K_Y^3 = -K_X^3 + \frac{1}{2}$ in the two cases respectively.

	Similarly, without any extra assumptions on the singularities this time, if $E \isom \overline{\FF}_2$ with normal sheaf $N_{E/X} = \OO_{\overline{\FF}_2}(-3\bar{f})$ and $K_X\cdot \bar{\sigma} = -1$, then $\alpha = \frac{1}{3}$. Moreover, we have $-K_Y^3 = -K_X^3 + \frac{1}{6}$.
\end{lemma}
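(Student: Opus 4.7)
The plan is to determine $\alpha$ by intersecting the ramification formula $K_X \sim_{\mathbb{Q}} p^*K_Y + \alpha E$ with a well-chosen curve $\gamma \subset E$. Since $E$ is contracted to a point, $p^*K_Y \cdot \gamma = 0$, so $\alpha = (K_X \cdot \gamma)/(E \cdot \gamma)$, and both numerator and denominator can be read off from the data of $K_E$ and $N_{E/X}$ via adjunction $K_X|_E = K_E - E|_E$. For the cube, I will expand $K_Y^3 = (p^*K_Y)^3 = (K_X - \alpha E)^3$ and reduce every term to an intersection on $E$, using
\[
K_X^2 \cdot E = (K_X|_E)^2, \quad K_X \cdot E^2 = (K_X|_E)\cdot(E|_E), \quad E^3 = (E|_E)^2.
\]

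For case (i), on $E \isom \PP^1 \times \PP^1$ we have $K_E = \OO(-2,-2)$ and $E|_E = N_{E/X} = \OO(-1,-1)$, so $K_X|_E = \OO(-1,-1)$. Taking $\gamma$ to be a fibre of either ruling yields $K_X \cdot \gamma = -1 = E \cdot \gamma$, hence $\alpha = 1$. Plugging $K_X^2\cdot E = K_X\cdot E^2 = E^3 = 2$ into the cube expansion gives $K_Y^3 = K_X^3 - 6 + 6 - 2 = K_X^3 - 2$.

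Case (ii) is analogous on $E \isom \PP^2$: with $K_E = \OO(-3)$ and $E|_E = \OO(-2)$ we get $K_X|_E = \OO(-1)$, and intersecting with a line gives $\alpha = \tfrac{1}{2}$. The intersection numbers are $K_X^2\cdot E = 1$, $K_X\cdot E^2 = 2$, $E^3 = 4$, and the cube expansion collapses to $K_Y^3 = K_X^3 - \tfrac{1}{2}$.

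The main subtlety lies in case (iii), where $E \isom \overline{\FF}_2$ is singular and so $X$ is not smooth along $E$; here adjunction in the classical form is unavailable, and $K_X|_E$ must be extracted from the single numerical datum $K_X \cdot \bar\sigma = -1$. My plan is to use that by Lemma \ref{FF_n bar}, $\CDiv(\overline{\FF}_2)_{\QQ} = \langle \bar\sigma\rangle_{\QQ}$ with $\bar\sigma^2 = 2$, so the $\QQ$-Cartier divisor $K_X|_E$ is forced to be $c\bar\sigma$ for some $c$, and the equation $K_X \cdot \bar\sigma = 2c = -1$ gives $K_X|_E = -\tfrac{1}{2}\bar\sigma = -\bar f$. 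Similarly $E|_E = -3\bar f$. Intersecting $K_X = p^*K_Y + \alpha E$ with $\bar\sigma$ yields $-1 = -3\alpha$, whence $\alpha = \tfrac{1}{3}$. The cube expansion then uses $K_X^2\cdot E = \bar f^2 = \tfrac12$, $K_X\cdot E^2 = 3\bar f^2 = \tfrac32$, $E^3 = 9\bar f^2 = \tfrac92$, producing $K_Y^3 = K_X^3 - \tfrac{1}{2} + \tfrac{1}{2} - \tfrac{1}{6} = K_X^3 - \tfrac{1}{6}$. The only delicate point to justify carefully is that intersection numbers of $\QQ$-Cartier classes on $\overline{\FF}_2$ (or equivalently on $E \subset X$) behave as expected despite the singularity, which follows from $X$ being $\QQ$-factorial.
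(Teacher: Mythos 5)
Your proof is correct and follows essentially the same route as the paper: $\alpha$ is obtained by intersecting the ramification formula with a contracted curve via adjunction, and the cube by expanding a triple self-intersection. The only cosmetic difference is that the paper expands $(p^*K_Y+\alpha E)^3$, so the cross terms vanish automatically and only $E^3$ is needed, whereas your expansion of $(K_X-\alpha E)^3$ forces you to also pin down $K_X|_E$ on the singular surface $\overline{\FF}_2$ --- a step you handle correctly via Lemma \ref{FF_n bar}.
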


\begin{proof}
	We first compute the discrepancy. Denote by $l$ a line in $E$, if $E \isom \PP^2$ or any ruling of $E$, if $E\isom \PP^1 \times \PP^1$. By the adjunction formula for $E$ we have $K_E = (K_Y + E)|_E$. 	Intersecting both formulas with $l$ and solving for $\alpha$ we get
	\[
	\alpha = \frac{K_E\cdot l  - E\cdot l}{E\cdot l}.
	\]
	In the first case we have $K_E\cdot l = -2$ and $E\cdot l = -1$ giving us $\alpha = 1$. In the second case we have $K_E\cdot l = -3$ and $E\cdot l = -2$ and we get $\alpha = \frac{1}{2}$.
	
	In the third case, intersecting the ramification formula for $p$ with $\bar{\sigma}$ we get
	\[
	\alpha = \frac{K_X\cdot \bar{\sigma} - p^*K_Y \cdot \bar{\sigma}}{E\cdot \bar{\sigma}} = \frac{ -1 - p^*K_Y \cdot \bar{\sigma}}{(-3\bar{f})\bar{\sigma}},
	\]
	which, since $\bar{\sigma}$ is $p$-exceptional, equals to $\frac{1}{3}$.	
	
	Finally we have
	\[
	K_X^3 = (p^*K_Y + \alpha E)^3 = K_Y^3 + 3\alpha(p^*K_Y)^2E + 4\alpha^2(p^*K_Y)E^2 + \alpha^3 E^3.
	\]
	In all cases the middle terms vanish.
	In the first case we have $\alpha = 1$ and $E^3 = 2$, in the second case we have $\alpha = \frac{1}{2}$ and $E^3 = 4$ and in the last case we have $\alpha = \frac{1}{3}$ and $E^3 = \frac{9}{2}$ and so a computation completes the proof.
\end{proof}

\begin{corollary}\label{cube of the anticanonical, flip}
	Let $X \psmap X'$ be a $(1,2)$-flip. Then $-K_{X'}^3 = -K_X^3 + k\frac{1}{2}$ where $k$ is the number of flipped curves.
	
	Similarly, if $X \psmap X'$ is a $(1,3)$-flip, then $-K_{X'}^3 = -K_X^3 + k\frac{8}{3}$ where $k$ is the number of flipped curves.
\end{corollary}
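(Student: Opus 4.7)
The plan is to resolve the anti-flip via the explicit construction of Proposition \ref{1,m flip} and to track the change of $(-K)^3$ through each elementary step, using Lemmas \ref{cube anticanonical} and \ref{cube of the anticanonical, contraction}. Since a $(1,m)$-flip is local around each of its $k$ flipping curves, the contributions add independently and it suffices to treat a single curve.

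For the $(1,2)$-flip, let $C \subset X$ be the single flipped curve, with $N_{C/X} \isom \OO_{\PP^1}(-1) \oplus \OO_{\PP^1}(-2)$. I resolve the Francia anti-flip $X \psmap X'$ as
\[
X \longleftarrow Y \dashrightarrow Y' \longrightarrow X',
\]
where $Y \to X$ is the blowup of $C$ with exceptional divisor $E \isom \FF_1$; $Y \dashrightarrow Y'$ is the Atiyah flop of the $(-1)$-section of $E$, whose normal bundle in $Y$ is $\OO_{\PP^1}(-1) \oplus \OO_{\PP^1}(-1)$ by Lemma \ref{intersection numbers}(3); and $Y' \to X'$ is the contraction of the strict transform of $E$—which the flop has turned into $\overline{\FF}_1 \isom \PP^2$ with normal bundle $\OO_{\PP^2}(-2)$—to a point. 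Applying Lemma \ref{cube anticanonical} to the blowup, invariance of $(-K)^3$ under flops to the middle step, and the $\PP^2$-case of Lemma \ref{cube of the anticanonical, contraction} to the contraction, a short bookkeeping gives $(-K_{X'})^3 - (-K_X)^3 = \tfrac{1}{2}$ per flipped curve.

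For the $(1,3)$-flip the resolution is already laid out in the proof of Proposition \ref{1,m flip}: blow up $C$ (exceptional divisor $\FF_2$), Francia-antiflip the $(-2)$-section of the exceptional divisor (whose normal bundle in the blowup is $\OO(-1)\oplus\OO(-2)$ by Lemma \ref{intersection numbers}(3)), and contract its strict transform $\overline{\FF}_2$, with normal sheaf $\OO(-3\bar f)$ and $K\cdot\bar\sigma = -1$—exactly the input required by Lemma \ref{cube of the anticanonical, contraction}. The three contributions are computed by Lemma \ref{cube anticanonical}, by the $(1,2)$-formula just established, and by the $\overline{\FF}_2$-case of Lemma \ref{cube of the anticanonical, contraction} respectively; summing the three gives $(-K_{X'})^3 - (-K_X)^3 = \tfrac{8}{3}$ per flipped curve.

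The main subtlety is the geometric identification of the strict transform of the exceptional $\FF_{m-1}$ after the intermediate pseudo-isomorphism, together with its normal bundle, since this is exactly the input needed by Lemma \ref{cube of the anticanonical, contraction}. For $m=3$ this is spelled out in Proposition \ref{1,m flip}; for $m=2$ the parallel statement—that the strict transform of $\FF_1$ after the Atiyah flop of its $(-1)$-section is $\PP^2$ with normal bundle $\OO(-2)$—is the classical description of the Francia anti-flip and can be checked locally in the same fashion as in that proposition.
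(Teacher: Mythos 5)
Your proposal is correct and follows essentially the same route as the paper: resolve the Francia anti-flip as blowup of the flipping curve, Atiyah flop of the $(-1)$-section of the exceptional $\FF_1$, and contraction of the resulting $\PP^2$ with normal bundle $\OO_{\PP^2}(-2)$, tracking $(-K)^3$ via Lemmas \ref{cube anticanonical} and \ref{cube of the anticanonical, contraction} (contributions $0$, $0$, $\tfrac{1}{2}$); and for the $(1,3)$-case use the resolution of Proposition \ref{1,m flip} with contributions $2$, $\tfrac{1}{2}$, $\tfrac{1}{6}$ summing to $\tfrac{8}{3}$. This matches the paper's proof, including the reduction to a single flipped curve by locality.
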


\begin{proof}
	For simplicity we will assume that the number of flipped curves is $1$. The general case is similar.
	
	We first treat the $(1,2)$-flip. Consider the resolution
	\[
	\xymatrix@C=0.2cm@R=0.5cm{
		&& Y_0 \ar[lld]_{r_0} \ar[rrd]^{s_0} \\
		Y_1 \ar[d]_{r_1} &&&& Y_1' \ar[d]^{s_1}\\
		X  \ar@{..>}[rrrr] &&&& X'
	}
	\]
	as constructed in \cite[6.10, p.\ 162]{Debarre}. By \cite[Lemma 2.4]{WeakFanos} we have 
	\[
	-K_{Y_1}^3 = -K_X^3 + 2K_X\cdot C - 2 +2g
	\]
	where $C$ is the flipped curve and $g$ is its genus. Since $K_X\cdot C = 1$ and $g=0$ we get $-K_{Y_1}^3 = -K_X^3$.
	We also have $-{K_{Y_1}}^3 = -{r_0^*K_{Y_1}}^3 = (-K_{Y_0} +E)^3 = -{s_0^*K_{Y_1'}}^3 = -K_{Y_1'}^3$. Finally, by Lemma \ref{cube of the anticanonical, contraction} we have $-K_{X'}^3 = -K_{Y_1'}^3 + \frac{1}{2} = -K_X^3 + \frac{1}{2}$. 
	
	For the $(1,3)$-flip we again consider the resolution 
	\[
	\xymatrix@C=1.2cm@R=0.8cm{
		Y \ar@{..>}[r]^{\chi} \ar[d] & Y'\ar[d]\\
		X \ar@{..>}[r]				& X'
	}
	\]
	constructed in Proposition \ref{1,m flip}, where now $\chi$ is a $(1,2)$-flip.  Using the formula $-K_{Y}^3 = -K_X^3 + 2K_X\cdot C - 2 +2g$ and since $K_X\cdot C =2$ we get $-K_Y^3 = -K_X^3 +2$.
	By the previous statement and by Lemma \ref{cube of the anticanonical, contraction}, following the diagram counter clockwise we get $-K_{X'}^3 = -K_{Y'}^3 +\frac{1}{6} = -K_Y^3 + \frac{1}{2} + \frac{1}{6} = -K_X^3 + 2 + \frac{1}{2} + \frac{1}{6} = -K_X^3 + \frac{8}{3}.$
\end{proof}

\subsection{Some properties of the links}\leavevmode

\subsubsection{Dimension of linear system of cubics}
A careful examination of Table \ref{List of candidate curves} reveals that the curves $1$ through $4$, all admit a pencil of $7$-secant conics. Indeed, using Lemma \ref{conic bundle} this amounts to finding two distinct, intersecting, $m_1$ and $m_2$-secant lines such that $m_1 + m_2 = 7$. For example, a pair of such lines that works in all cases is $c_1$ and $l_{1,5}$. This immediately implies that in those cases, $C$ is contained in a unique cubic.

However, this is not true for the last two cases of the table as the following Lemma shows.

\begin{lemma}\label{pencil of cubics}
	Let $C$ be one of the last two curves of Table \ref{List of candidate curves}. Then $C$ is contained in a pencil of cubics.
\end{lemma}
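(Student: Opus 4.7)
The plan is to reduce the question to computing the dimension of a linear system on the smooth cubic $S$ itself. Consider the ideal sheaf exact sequence
\[
0 \to \mathcal{I}_{S/\PP^3}(3) \to \mathcal{I}_{C/\PP^3}(3) \to \mathcal{I}_{C/S}(3) \to 0,
\]
twisted by $\mathcal{O}_{\PP^3}(3)$. Since $S$ is a cubic, $\mathcal{I}_{S/\PP^3}(3) \cong \mathcal{O}_{\PP^3}$, which has $h^0 = 1$ and $h^1 = 0$. Moreover $\mathcal{I}_{C/S}(3) \cong \mathcal{O}_S(3H|_S - C) \cong \mathcal{O}_S(-3K_S - C)$, since the hyperplane class on $S$ equals $-K_S$. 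Taking global sections gives
\[
h^0\bigl(\PP^3, \mathcal{I}_C(3)\bigr) \;=\; 1 + h^0\bigl(S, -3K_S - C\bigr),
\]
so showing that $C$ lies in a pencil of cubics is equivalent to showing $h^0(S, -3K_S - C) \geq 1$, i.e.\ that $-3K_S - C$ is effective on $S$.

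Now I compute $-3K_S - C$ directly from the type. Using $-K_S \sim (3;\,1,1,1,1,1,1)$:

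\begin{itemize}
\item For $C$ of type $(3;\,2,1,0,0,0,0)$ one gets $-3K_S - C \sim (6;\,1,2,3,3,3,3)$.
\item For $C$ of type $(5;\,3,1,1,1,1,1)$ one gets $-3K_S - C \sim (4;\,0,2,2,2,2,2)$.
\end{itemize}

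The main step is to exhibit an effective representative of each class. The second is immediate: $(4;\,0,2,2,2,2,2) = 2\,c_1$, twice the strict transform of the conic through $p_2,\dots,p_6$, which is a $(-1)$-curve on $S$. For the first, the decomposition $(6;\,1,2,3,3,3,3) = 2c_1 + c_2$ (with $c_2 \sim (2;\,1,0,1,1,1,1)$) is readily verified by adding coordinates.

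Finally, to see that the linear system is exactly a pencil (matching the term in the statement) one checks uniqueness of the effective representatives. In both cases $-3K_S - C$ has strictly negative intersection with $c_1$ (and, after stripping off $c_1$, again with $c_1$, and so on), forcing any effective representative to contain $c_1$ with the prescribed multiplicity; peeling off $c_1$ reduces to a $(-1)$-curve class, so the effective representative is unique and $h^0(S, -3K_S - C) = 1$. Hence $h^0(\PP^3, \mathcal{I}_C(3)) = 2$, giving a pencil. The only potential obstacle is confirming that the negative-intersection argument really does force uniqueness, but this is routine since the residual classes visibly land on $(-1)$-curve rays of $\NE(S)$.
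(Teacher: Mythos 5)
Your proof is correct and follows essentially the same route as the paper: both reduce to showing $h^0(S,-3K_S-C)=1$ via the restriction sequence (the paper phrases it on the blowup $X$ with $\OO_X(3H-E)$, you phrase it with ideal sheaves on $\PP^3$, which is the same computation), and both then verify that the residual class $-3K_S-C$ is effective and rigid by peeling off the $(-1)$-curves $c_1$ and $c_2$, arriving at the identical decompositions $2c_1+c_2$ and $2c_1$ in the two cases.
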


\begin{proof}
	We consider the exact sequence
	\[
	0 \map H^0(X,\OO_X) \map H^0(X,\OO_X(3H-E)) \map H^0(X,\OO_S(3H-E|_S)) \map H^1(X,\OO_X).
	\]
	Since $X$ is projective, rational with rational singularities, we have $h^0(X,\OO_X) = 1$ and $h^1(X,\OO_X) = h^1(\PP^3,\OO_{\PP^3}) = 0$. Thus 
	\[
	h^0(X,\OO_X(3H-E)) = h^0(X,\OO_S(3H-E|_S)) +1,
	\]
	where the sections on the left-hand side of this equation correspond to cubics containing $C$. Then one can check that the divisor $3H-E|_S$ is effective and fixed. We do this calculation for the case $5$ of the table.
	
	We first note that $3H-E|_S$ has negative intersection with and thus contains all $4$ and $5$-secant lines in its base locus with multiplicity $1$ and $2$ respectively. We then have
	\begin{gather*}
	3H-E|_S - 2c_1 - c_2 = -3K_S - C - 2c_1 - c_2 = 0
	\end{gather*}
	and so the movable part of $3H-E|_S$ is zero, i.e.\ $h^0(X,\OO_S(3H-E|_S))=  h^0(X,\OO_S)= 1$, proving the claim.
\end{proof}

\subsubsection{Mori chambers}
Let 
\[
\xymatrix@R=0.4cm{
	X \ar[d] \ar@{..>}[rr]^{\chi} && Y \ar[d]\\
	\PP^3 \ar[rd] 				&    			& Z\ar[ld]\\
	& \text{pt}
}
\]
be a Sarkisov diagram, where $X \map \PP^3$ is the blowup of one of the curves of Table \ref{List of candidate curves} and $\chi$ is a pseudo-isomorphism which is a composition of anti-flips, flops and flips ($Y \map Z$ can be either divisorial or of fibre type). 

We have already proven that $X$ is an MDS and so by Proposition \ref{Properties of MDS} the pseudo-effective cone of $X$ admits a decomposition into chambers of the form 
\[
\mathcal{C}_i = g_i^*\Nef(Y_i) + \RR_{\geq 0}\{E_1, \dots, E_k\},
\]
where $g_i$ are birational contractions and $E_j$ are prime divisors contracted by $g_i$. Moreover, if $\mathcal{C}_i$ and $\mathcal{C}_j$ are neighbouring chambers, $Y_i$ and $Y_j$ are connected by an SQM or an extremal contraction.

Since $\rho(X)=2$, the pseudo-effective cone $\overline{\Eff}(X)$ is 2-dimensional. In the following Lemma we will compute $\overline{\Eff}(X)$.

\begin{lemma}\label{Pseff}
	If $C$ is one of the curves of Table \ref{List of candidate curves}, then $\overline{\Eff}(X)$ is spanned by the divisors $E$ and $3H-E$.
\end{lemma}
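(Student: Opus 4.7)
My plan starts from Lemma~\ref{curves}, which tells us that $\PsEff(X)$ has two extremal rays, one generated by $E$ and the other by a class of the form $dH-mE$ with $m/d$ maximal among pseudo-effective classes. Since $C$ lies on the smooth cubic $S \subset \PP^3$, the strict transform $\tilde S \sim 3H-E$ is effective and the inequality $m/d \geq 1/3$ is immediate. The content of the lemma is therefore the reverse inequality $m/d \leq 1/3$, and my proof will aim squarely at that.

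To attack this, I will use the fact that $X$ is an MDS (Proposition~\ref{X is an MDS}), so that $\PsEff(X)$ is rational polyhedral and its extremal rays are generated by effective integral divisors. It then suffices to show that no effective divisor $D \sim dH-mE$ with $3m>d$ can exist. The key tool will be a covering family: I plan to produce, on $\tilde S$, a one- or two-dimensional family of curves $\{\gamma_t\}$ sweeping out $\tilde S$ whose numerical class in $X$ is a positive multiple $a(l-3f)$. Given such a family, a direct intersection calculation yields $D\cdot \gamma_t = a(d-3m)<0$ for every $t$, forcing $\gamma_t \subset \Supp(D)$. Writing $D=\mu \tilde S+D'$ with $\mu$ the maximal multiplicity of $\tilde S$ in $D$ and $D'$ effective containing no copy of $\tilde S$, the same formula gives $D'\cdot \gamma_t = a(d-3m)<0$. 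If $D'\neq 0$, the $\gamma_t$'s would all lie in the one-dimensional subscheme $\tilde S \cap D'$, contradicting the fact that they sweep out the whole surface $\tilde S$. Hence $D'=0$, $D=\mu \tilde S$, and $m/d=1/3$, a contradiction.

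The main obstacle---and where I expect the work to concentrate---is the case-by-case construction of the covering family. For each of the four curves in $\mathcal T^{(II)}$ and for $(3;\,2,1,0,0,0,0)$ in $\mathcal T^{(I)}$, I plan to exhibit two intersecting lines on $S$ whose $C$-secancies sum to $3$ (for example $e_1$ and $l_{1,3}$ in case $(3;\,1,1,0,0,0,0)$, and analogous pairs elsewhere) and then invoke Lemma~\ref{conic bundle} to obtain a pencil of $3$-secant conics of class $2(l-3f)$ in $X$ that covers $\tilde S$. The subtle case is $(5;\,3,1,1,1,1,1)$, where every line on $S$ is $1$- or $3$-secant and no admissible pair exists; here I plan to replace conics by plane cubics, working with the linear system on $S$ of class $3L-2F_2-F_3-F_4-F_5-F_6$, whose general member has degree $3$ and meets $C$ in $9$ points, corresponding to class $3(l-3f)$ in $X$. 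A Riemann-Roch calculation on $S$ (combined with Kawamata--Viehweg for the vanishing $h^1=0$, using that the class is big and nef) gives $h^0=3$, producing the desired $2$-dimensional family covering $\tilde S$. With the covering family constructed in every case, the argument of the previous paragraph closes the proof uniformly.
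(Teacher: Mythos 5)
Your strategy is the same as the paper's: sweep out the cubic by a pencil of conics that is negative against any effective class $dH-mE$ of slope $m/d>1/3$, and conclude that the cubic itself is the only effective divisor beyond that slope. But the numerical heart of your construction is wrong. A conic on $S$ meeting $C$ in $k$ points has class $2l-kf$ in $X$, so a $3$-secant conic has class $2l-3f$, \emph{not} $2(l-3f)=2l-6f$. The pencils you build from two intersecting lines whose secancies sum to $3$ (e.g.\ $e_1$ and $l_{1,3}$ in case $1$) therefore satisfy $D\cdot\gamma_t=2d-3m$, which is negative only when $m/d>2/3$; your argument as written only bounds the slope of the second extremal ray by $2/3$ and does not prove the lemma. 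What you need are pairs of intersecting lines whose secancies sum to at least $6$, giving conics of class $2l-kf$ with $k\geq 6$; for $k=6$ this is exactly your $2(l-3f)$ and your intersection computation goes through, while for $k=7$ one still has $D\cdot\gamma_t=2d-7m<0$ whenever $m/d>1/3$ (since $2/7<1/3$). Table \ref{List of candidate curves} shows such pairs exist in all six cases: the paper uses $c_1$ and $l_{1,5}$, whose secancies sum to $6$ or $7$ in every case, and then concludes by B\'ezout/irreducibility that an irreducible $D$ of slope $>1/3$ must equal the cubic.

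With the corrected threshold, your special treatment of case $6$, namely $(5;3,1,1,1,1,1)$, is unnecessary: $c_1$ is $5$-secant and meets the $1$-secant lines $l_{1,j}$ and $e_i$ ($i\geq 2$), so a pencil of $6$-secant conics exists there too; your claim that no admissible pair exists is an artifact of the wrong threshold (and your proposed net of plane cubics of class $3L-2F_2-F_3-\dots-F_6$, while correct in class and dimension, is more work than needed). The remaining scaffolding --- reducing to integral effective representatives via the MDS property, and the decomposition $D=\mu\tilde S+D'$ with $D'\cdot\gamma_t<0$ forcing $D'=0$ because the $\gamma_t$ cover $\tilde S$ --- is sound once the conics have the right class; note that the step $D'\cdot\gamma_t=D\cdot\gamma_t$ uses $\tilde S\cdot\gamma_t=0$, which holds precisely for the $6$-secant pencils, so you should use those rather than the $7$-secant ones in that final step.
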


\begin{proof}
	By Lemma \ref{curves}, $\overline{\Eff}(X)$ is spanned by $E$ and a divisor $dH-mE$ with $\frac{m}{d}$ maximal. We first note that in all $6$ cases 
	the pencil of conics $\mathcal{P}$ associated to the lines $c_1$ and $l_{1,5}$ (see Lemma \ref{conic bundle}) is a pencil of $6$ or $7$-secant conics (depending on the case), which spans a cubic containing $C$. 
	If now $D\sim dH-mE$ is an effective divisor with $\frac{m}{d} > \frac{1}{3}$ whose support is irreducible, then using B\'ezout's theorem, we deduce that all conics in $\mathcal{P}$ are contained in $D$. Since $\mathcal{P}$ spans a cubic $S$, we have $S \subseteq D$. Since $D$ is irreducible we get $S = D$.
\end{proof}

We then have the following dichotomy:
\begin{itemize}
	\item As discussed in the beginning of this subsection, curves in cases $1$ through $4$ of Table \ref{List of candidate curves} are contained in a unique cubic surface $S$ , thus the morphism associated to the linear system of cubics is $X \map pt$. This implies that the rightmost chamber $\mathcal{C}$, which is the one associated to $Z$, contains only big divisors (since they all are a positive combination of an ample and an effective divisor). Thus $Z$ is birational to $X$ and so it is a Fano $3$-fold of Picard rank $1$. Moreover $Y\map Z$ is the contraction of the strict transform of  $S$. This is a link of \textbf{Type II}.
	\item In cases $5$ and $6$, Lemma \ref{pencil of cubics} yields that the rational map associated to the system of cubics is $X \map \PP^1$. This implies that $Z = \PP^1$ and in turn that $Y$ is a del-Pezzo fibration with fibres the strict transforms of the cubics containing $C$. This is a link of \textbf{Type I}.
\end{itemize}

\subsubsection{The matrix of the transformation}\label{section of the matrix}
The pseudo-isomorphism $\chi$ induces an isomorphism $\chi_*$ between the groups $\WDiv(X)$ and $\WDiv(Y)$. The isomorphism is given by restricting prime divisors in the regular locus of $\chi$ and taking the closure of their image in $Y_0$. We may extend this to an isomorphism between the $\QQ$-vector spaces $\WDiv_{\QQ}(X)$ and $\WDiv_{\QQ}(Y)$.

We fix a cubic $S$ containing $C$ and define $T\defeq \chi_*S$. We also fix the bases $\WDiv_{\QQ}(X) = \langle H,E \rangle$ and $\WDiv_{\QQ}(Y_0) = \langle K_{Y_0},T \rangle$. Note that these divisors do not necessarily generate the $\ZZ$-modules $\WDiv(X)$ and $\WDiv(Y_0)$.
Since $K_X = -4H + E \mapsto K_{Y_1}$ and $S = 3H - E \mapsto T$, the matrix of the isomorphism is 
$\begin{pmatrix}
-1 & -3 \\
-1 & -4
\end{pmatrix}$
with inverse 
$\begin{pmatrix*}[r]
-4 & 3 \\
1 & -1
\end{pmatrix*}$.

\subsection{Some invariants of the targets of the links -  Cases 1 through 4} 

As proven earlier, in the cases $1$-$4$ the Sarkisov diagram takes the form 
\[
\xymatrix@R=0.4cm{
	X \ar[d] \ar@{..>}[r] & X_0 \ar@{..>}[r] & Y_0 \ar[d]^{p}\\
	\PP^3 \ar[rd] 				&    			& Y\ar[ld]\\
	& \text{pt}
}
\]
where $Y$ is a Fano $3$-fold of Picard rank $1$.

\subsubsection{The contraction {$Y_0 \to Y$}} The restriction of the pseudo-isomorphism $\chi\colon X \psmap Y_0$ to the cubic $S$ is the contraction of the anti-fliped/flopped curves. This can be verified using the explicit resolutions of the $(1,m)$-flips of Proposition \ref{1,m flip} as well as Lemma \ref{intersection numbers}. 

In cases $1,2$ and $4$ the restriction $\chi|_S\colon S \map T$ is just the contraction of $6$ $(-1)$-curves and so $T \isom \PP^2$. More specifically, $\chi|_S$ fits into the diagram
\[
\xymatrix{
	&S \ar[ld]_p \ar[rd]^{\chi|_S}\\
	\PP^2 && T
}
\]
where $p$ is the blow up of $6$-points followed by the contraction of the $6$ conics through $5$ of the $6$ points. We may compute that the pullback of line $l \subset T \isom \PP^2$ on $S$ is of type $(5;2,2,2,2,2,2)$. We then have
\[
T|_T\cdot l = S|_S\cdot (\chi|_{S})^*l,
\]
which in all $3$ cases can be computed to be $-2$. Thus $T\isom \PP^2$ with $N_{T/Y_0} = \OO_{\PP^2}(-2)$.

In case $3$, the restriction $\chi|_S\colon S \map T$ contracts $5$ $(-1)$-curves, thus $T$ can be isomorphic to either $\FF_1$ or $\PP^1 \times \PP^1$. However, in reference to the morphism $S \map \PP^2$, $\chi|_S$ contracts the strict transforms of: 4 conics through 5 of the points; 1 line through 2 of the points.
This implies that $T$ is isomorphic to $\PP^1 \times \PP^1$. More specifically, $\chi|_S\colon S \map T$ factors as
\[
\xymatrix{
	& S \ar[ld] \ar[rd] \ar[rr] & & S' \ar[ld] \ar[rd] \\
	\PP^2 && T' \isom \PP^2 && T
}
\]
where starting from left to right the morphism are: the blowup of 6 points, the contraction of the 6 conics though 5 of the 6 points, the blowup of 2 points and finally the contraction of the line through the 2 points. 

Pulling back classes of the two rullings of $T$ under the morphism $S \map T$,  we find that they are the strict transforms of lines in $T'$ passing through the 2 blown up points. Their classes on $S$ are $(5;2,2,2,2,2,2) - c_5$ and $(5;2,2,2,2,2,2) - c_6$. As above, intersecting $S|_S$ with both those classes we get $-1$ and so we find that the normal bundle of $T$ in $Y_0$ is $\OO_{\PP^1 \times \PP^1}(-1,-1)$.

\subsubsection{Singularities of $Y$} Following the Sarkisov diagram clockwise we may compute the singularities of $Y$. We will do this case by case.
\begin{itemize}
	\item[$\#1.$] We have the $(1,2)$-flip of 2 curves $X \psmap X_0$, followed by the $(1,1)$-flop of 4 more curves $X_0 \psmap Y_0$ and finally the contraction of $T \isom \PP^2$ with $N_{T/Y_0} = \OO_{\PP^2}(-2)$. These modifications produce $2$ quotient singularities of type $\frac{1}{2}(1,1,1)$, no singularities and another quotient singularity of type $\frac{1}{2}(1,1,1)$ respectively.
	\item[$\#2.$] We have the $(1,3)$-flip of a curve, followed by the flop of $5$ curves and finally the contraction of $T \isom \PP^2$ with $N_{T/Y_0} = \OO_{\PP^2}(-2)$. These modifications produce $1$ quotient singularity of type $\frac{1}{3}(1,1,2)$, no singularities and a quotient singularity of type $\frac{1}{2}(1,1,1)$ respectively.
	\item[$\#3.$] We have the $(1,2)$-flip of a curve, followed by the flop of $4$ curves (notice that the line $l_{5,6}$ is a $4$-secant) and finally the contraction of $T \isom \PP^1 \times \PP^1$ with $N_{T/Y_0} = \OO_{\PP^1 \times \PP^1}(-1,-1)$. These modifications produce $1$ quotient singularity of type $\frac{1}{2}(1,1,1)$, no singularities and an ordinary double point respectively.
	\item[$\#4.$] We have the $(1,2)$-anti-flip of a curve, followed by flop of 5 curves and finally the contraction of $T \isom \PP^2$ with $N_{T/Y_0} = \OO_{\PP^2}(-2)$. These modifications produce $1$ quotient singularity of type $\frac{1}{2}(1,1,1)$, no singularities and a quotient singularity of type $\frac{1}{2}(1,1,1)$ respectively.
\end{itemize}

\subsubsection{Cube of $-K_Y$} Again following the Sarkisov diagram clockwise  and using Lemmata \ref{cube anticanonical} and \ref{cube of the anticanonical, contraction} as well as Corollary \ref{cube of the anticanonical, flip} we may compute that
\begin{itemize}
	\item[$\#1.$] $-K_X^3 = 8\phantom{0} \implies -K_{X_0}^3 = 9\,\,\, \implies -K_{Y_0}^3 = 9 \,\,\,\implies -K_Y^3 = \frac{19}{2}$;
	\item[$\#2.$] $-K_X^3 = 6\phantom{0} \implies -K_{X_0}^3 = \frac{26}{3} \implies -K_{Y_0}^3 = \frac{26}{3} \implies -K_Y^3 = \frac{55}{6}$;
	\item[$\#3.$] $-K_X^3 = 10 \implies -K_{X_0}^3 = \frac{21}{2} \implies -K_{Y_0}^3 = \frac{21}{2} \implies -K_Y^3 = \frac{25}{2}$;
	\item[$\#4.$] $-K_X^3 = 8\phantom{0} \implies -K_{X_0}^3 = \frac{17}{2} \implies -K_{Y_0}^3 = \frac{17}{2} \implies -K_Y^3 = 9$.
\end{itemize}

\subsubsection{Fano index of $Y$} 
We distinguish cases.\\
\underline{Cases $1$ and $4$}:
In those cases, the least common multiple among the indices of the singularities of $Y$ is $2$. This implies that $2\WDiv(Y) \subseteq \CDiv(Y)$ (see \cite[Corollary 5.2]{Index}). Denote by $r$ the Fano-Weil index of $Y$. That is precisely the number
\[
r \defeq \max\left\{q \in \ZZ \mid -K_Y \sim qA, A\text{ is a Weil divisor}\right\}.
\]
Let $A$  be a Weil divisor such that $-K_Y = rA$.
Then, using Lemma \ref{cube of the anticanonical, contraction}, we have
\[
rp^*(2A) = p^*(2rA) = p^*(-2K_Y) = -2K_{Y_0} + T,
\]
which is the vector $(-2,1)$ in the $\ZZ$-submodule $\langle K_{Y_0},T\rangle \leq \WDiv(Y_0)$. By the calculations in Subsection \ref{section of the matrix} its strict transform is
\[
r\chi_*^{-1}(p^*(2A)) = 
\begin{pmatrix*}[r]
-4 & 3 \\
1 & -1
\end{pmatrix*}
\begin{pmatrix*}[r]
-2 \\
1 
\end{pmatrix*} 
= 
\begin{pmatrix*}[r]
11 \\
-3 
\end{pmatrix*}
=
(11H - 3E)
\]
which is not divisible in $\WDiv(X)$. Thus $r = 1$.

\noindent\underline{Case $2$}:
Similarly, we have $6\WDiv(Y) \subseteq \CDiv(Y)$. We have
\[
rp^*(6A) = p^*(6rA) = p^*(-6K_Y) = -6K_{Y_0} + 3T
\]
and so 
\[
r\chi_*^{-1}(p^*(6A)) = 
\begin{pmatrix*}[r]
-4 & 3 \\
1 & -1
\end{pmatrix*}
\begin{pmatrix*}[r]
-6 \\
3 
\end{pmatrix*} 
= 
\begin{pmatrix*}[r]
33 \\
-9
\end{pmatrix*}
=
(33H - 9E),
\]
which is divisible by $3$ in $\WDiv(X)$. Thus $r = 1$ or $3$. However, the divisor $A = p_*(\chi_*(11H - 3E))$  has the property $-K_Y = 3A$, thus the index is $3$.

\noindent\underline{Case $3$}: In the final case we have $2\WDiv(Y)\subseteq \CDiv(Y)$ and
\[
rp^*(2A) = p^*(2rA) = p^*(-2K_Y) = -2K_{Y_0} + 2T.
\]
We thus get
\[
r\chi_*^{-1}(p^*(2A)) = 
\begin{pmatrix*}[r]
-4 & 3 \\
1 & -1
\end{pmatrix*}
\begin{pmatrix*}[r]
-2 \\
2 
\end{pmatrix*} 
= 
\begin{pmatrix*}[r]
14\\
-4
\end{pmatrix*}
=
(14H - 4E)
\]
which is divisible by $2$ in  $\WDiv(X)$. As before we may conclude that the index is $2$.
%
%



We summarize the data in the following table.

\begin{table}[!h]	
	\rowcolors{3}{gray!25}{white}
	$
	\def\arraystretch{1.2}
	\begin{array}{| c | c | c | c | c |}
	\hline
	\rowcolor{gray!50}
	\# & \text{\begin{tabular}{c}
		Type of the\\ contraction $p$
		\end{tabular} }& 
	\text{ \begin{tabular}{c}
		Singularities \\ of $Y$
		\end{tabular}
	}  & -K_Y^3 &  
	\text{\begin{tabular}{c}
		Fano-Weil \\ Index
		\end{tabular}}  \\
	\hline
	1 &E5& 3\times \frac{1}{2}(1,1,1) 					& \frac{19}{2}  &  1 \\ 
	2 &E5& \frac{1}{2}(1,1,1), \frac{1}{3}(1,1,2) &  \frac{55}{6}  &  3 \\ 
	3 &E3& \frac{1}{2}(1,1,1), \text{odp}								&  \frac{25}{2}  &  2 \\ 
	4 &E5& 2\times \frac{1}{2}(1,1,1)  			&  9  					&  1 \\
	\hline
	\end{array}
	$
\end{table}


\subsection{Some invariants of the targets of the links - Cases 5 and 6.}

In cases $5$ and $6$ of the table, a similar argument as Proposition \ref{4-secant lines} together with Remark \ref{sum of lines} shows that
after anti-flipping the $5$-secant lines we have the flop of any $4$-secant lines. Using again Remark \ref{sum of lines} we see that there are no irreducible curves between (the rays spanned by the classes of) the $4$-secants and the $3$-secants and so the next step in the link is the contraction of the $3$-secant lines. This is given by the linear system of cubics containing $C$, thus the Sarkisov diagrams take respectively the forms\\
\begin{minipage}{0.5\linewidth}
	\[
	\xymatrix@R=0.5cm@C=0.7cm{
		X \ar[d] \ar@{..>}[rr]^{\text{anti-flip}} && X_0 \ar@{..>}[rr]^{\text{flop}} && Y_0 \ar[d]^p\\
		\PP^3 \ar[rrd]&&&& \PP^1 \ar[lld]\\
		&& pt
	}
	\]
\end{minipage}
\begin{minipage}{0.5\linewidth}
	\[
	\xymatrix@R=0.5cm{
		X \ar[d] \ar@{..>}[rr]^{\text{anti-flip}} && Y_0 \ar[d]^p\\
		\PP^3 \ar[rd]&& \PP^1 \ar[ld]\\
		& pt
	}
	\]
\end{minipage}
where $p\colon Y_0 \map \PP^1$ is a del-Pezzo fibration.

\subsubsection{The fibration {$Y_0 \to \PP^1$}} As in the previous section, the restriction of $\chi\colon X \psmap Y_0$ to any cubic surface can be checked to be the contraction of the anti-flipped/flopped curve. We conclude that:
\begin{itemize}
	\item[$\#5$.] In case $5$, the restriction of $\chi$ on a cubic is the contraction of $2$ curves. The strict transform of any cubic is thus a del-Pezzo surface of degree $5$. 
	\item[$\#6$.] In case $6$, the restriction of $\chi$ on a cubic is the contraction of $1$ curve. The strict transform of any cubic is thus a del-Pezzo surface of degree $4$.
\end{itemize}

\subsubsection{The singularities of {$Y_0$}} As before we will compute the singularities of $Y_0$ following the Sarkisov diagram clockwise.
\begin{itemize}
	\item[$\#5$.] In case $5$, we have the $(1,2)$-flip of $1$ curve followed by the flop of $1$ curve. These pseudo-isomorphisms produce $1$ quotient singularity of type $\frac{1}{2}(1,1,1)$.
	\item[$\#6$.] In case $6$, we have the $(1,2)$-flip of $1$ curve which again produces $1$ quotient singularity of type $\frac{1}{2}(1,1,1)$.
\end{itemize}

\subsubsection{Cube of {$-K_{Y_0}$}} Once more we will use Lemma \ref{cube anticanonical} and Corollary \ref{cube of the anticanonical, flip} and follow the diagram clockwise to compute $(-K_{Y_0})^3$. We have
\begin{itemize}
	\item[$\#5$.] $-K_X^3 = 14\implies -K_{X_0}^3 = \frac{29}{2}\,\,\, \implies -K_{Y_0}^3 = \frac{29}{2}$;
	\item[$\#6$.] $-K_X^3 = 12\implies -K_{X_0}^3 = \frac{25}{2}\,\,\, \implies -K_{Y_0}^3 = \frac{25}{2}$.
\end{itemize}

\begin{table}[!h]	
	\rowcolors{3}{gray!25}{white}
	$
	\def\arraystretch{1.2}
	\begin{array}{| c | c | c | c |}
	\hline
	\rowcolor{gray!50}
	\# & \text{\begin{tabular}{c}
		Type of the\\ contraction $p$
		\end{tabular} }& 
	\text{ \begin{tabular}{c}
		Singularities \\ of $Y$
		\end{tabular}
	}  & -K_Y^3  \\
	\hline
	5 &
	\text{del-Pezzo fibration
		of degree 5}
	& 
	\frac{1}{2}(1,1,1) 					& \frac{29}{2}   \\ 
	6 &
	\text{del-Pezzo fibration	of degree 4}
	& \frac{1}{2}(1,1,1) &  \frac{25}{2}  \\ 
	\hline
	\end{array}
	$	
\end{table}

\bibliographystyle{alpha}

\end{document}